\newtheorem{theorem}{Theorem}[section]
\newtheorem{lemma}[theorem]{Lemma}
\newtheorem{problem}[theorem]{Problem}
\newtheorem{remark}[theorem]{Remark}
\numberwithin{equation}{section}
\def\r{\mathbb{R}}
\def\s{\mathbb{S}}
\def\rn{\mathbb{R}^N}
\def\eps{\varepsilon}
\def\rh{\rightharpoonup}
\def\io{\int_{\Omega}}
\def\irn{\int_{\r^N}}
\def\vp{\varphi}
\def\vr{\varrho}
\def\o{\Omega}
\def\cC{\mathcal{C}}
\def\cE{\mathcal{E }}
\def\cN{\mathcal{N}}
\def\cO{\mathcal{O}}
\def\cU{\mathcal{U}}
\def\cO{\mathcal{O}}
\def\d{\,\mathrm{d}}
\def\gen{\mathrm{genus}}
\def\what{\widehat}
\author{Mónica Clapp \and Víctor Hernández-Santamaría\thanks{The work of V. Hern\'andez-Santamar\'ia is supported by the program ``Estancias Posdoctorales por México para la Formación y Consolidación de las y los Investigadores por México'' of CONAHCYT (Mexico). He also received support from CONAHCYT grant CBF2023-2024-116 and from UNAM-DGAPA-PAPIIT grants IN109522, IA100324, and IA100923 (Mexico).} \and Alberto Saldaña\footnote{ A. Saldaña is supported  by
CONAHCYT grants CBF2023-2024-116 (Mexico) and by UNAM-DGAPA-PAPIIT grant IA100923 (Mexico).
}}
\title{Positive and nodal limiting profiles for a semilinear elliptic equation with a shrinking region of attraction}
\date{}
\begin{document}
\maketitle
	
\begin{abstract}
We study the existence and concentration of positive and nodal solutions to a Schrödinger equation in the presence of a shrinking self-focusing core of arbitrary shape. Via a suitable rescaling, the concentration gives rise to a limiting profile that solves a nonautonomous elliptic semilinear equation with a sharp sign change in the nonlinearity.  We characterize the (radial or foliated Schwarz) symmetries and the (polynomial) decay of the least-energy positive and nodal limiting profiles.

\medskip

\noindent\textsc{Keywords:} Schrödinger equation, self-focusing core, positive and nodal least energy solutions, limit profile, asymptotic decay, symmetries.
\medskip

\noindent\textsc{MSC2020:} 35J61, 35J20, 35B07, 35B40.
\end{abstract}

\section{Introduction}

Let $\o$ be a bounded open subset of $\rn$, not necessarily connected, $N\geq 3$, and $Q:\rn\to\r$ be the function
\begin{equation*}
Q(x):=
\begin{cases}
1 & \text{if \ }x\in\o, \\
-1 & \text{if \ }x\in\rn\smallsetminus\o.
\end{cases}
\end{equation*}
Consider the problem
\begin{equation} \label{eq:Q_eps_problem}
\begin{cases}
-\Delta v+v=Q_\eps(x)|v|^{p-2}v,\\
v\in H^1(\rn),
\end{cases} 
\end{equation}
where $\eps>0$, $Q_\eps(x):=Q(\frac{x}{\eps})$, i.e., $Q_\eps(x)=1$ if $x\in\eps\o$ and  $Q_\eps(x)=-1$ if $x\in\rn\smallsetminus\eps\o$, $p\in (2,2^*)$ and $2^*$ is the critical Sobolev exponent, namely, $2^*:=\frac{2N}{N-2}$.

Equations of this kind occur in some models of optical waveguides propagating through a stratified dielectric medium, see \cite{st1,st2}. The amplitude of the electric field is given by a positive ground state. The nonlinear term of~\eqref{eq:Q_eps_problem} is related to the nonlinear contribution of the dielectric response. The response is said to be defocusing at $x$ if $Q_\eps(x)<0$ and it is called self-focusing at $x$ if $Q_\eps(x)>0$. So the problem~\eqref{eq:Q_eps_problem} for small $\eps$ describes the situation where the medium has a self-focusing core. A detailed discussion may be found in \cite{as}.

Ackermann and Szulkin studied this problem in \cite{as} and showed that the positive least energy solutions exhibit concentration as $\eps\to 0$. When $\o$ is the unit ball, Fang and Wang proved in \cite{fw} that the limit profile of these solutions is a least energy solution to the problem
\begin{equation} \label{eq:limit_problem}
\begin{cases}
-\Delta w=Q(x)|w|^{p-2}w,\\
w\in E,
\end{cases} 
\end{equation}
where $E:=D^{1,2}(\rn)\cap L^p(\rn)$. See also \cite{css} where competitive Schrödinger systems with shrinking regions of attraction are considered, and \eqref{eq:limit_problem} also appears as a limit problem in some cases. Thus, it is of interest to obtain information on the qualitative properties of the solutions to~\eqref{eq:limit_problem}. This is one of our goals.

We start by showing that, for each $\eps>0$, the problem~\eqref{eq:Q_eps_problem} has infinitely many solutions, that it has a positive least energy solution and a least energy nodal solution and that both of them exhibit concentration as $\eps\to 0$. More precisely, we prove the following results.

\begin{theorem} \label{thm:main_multiplicity}
For each $\eps>0$ the problem~\eqref{eq:Q_eps_problem} has infinitely many solutions.
\end{theorem}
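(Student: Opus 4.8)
The plan is to obtain the solutions as critical points of the energy functional
$$J_\eps(v) = \frac{1}{2}\irn(|\nabla v|^2 + v^2)\d x - \frac{1}{p}\irn Q_\eps(x)|v|^p\d x, \qquad v \in H^1(\rn),$$
which is of class $C^1$ on $H^1(\rn)$ with the norm $\|v\|^2:=\irn(|\nabla v|^2+v^2)\d x$. Since $|v|^{p-2}v$ is odd, the nonlinearity is even, so $J_\eps$ is an even functional with $J_\eps(0)=0$. The natural tool is then the symmetric mountain pass theorem of Ambrosetti and Rabinowitz, which produces an unbounded sequence of critical values — and hence infinitely many solutions — once one checks that $J_\eps$ satisfies the Palais--Smale condition, that it is bounded below by a positive constant on a small sphere, and that on every finite-dimensional subspace it becomes negative outside a bounded set.

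I would first dispose of the geometric hypotheses. Writing $\int Q_\eps|v|^p = \int_{\eps\o}|v|^p - \int_{\rn\smallsetminus\eps\o}|v|^p$, the defocusing integral carries the favorable sign, so $J_\eps(v) \ge \tfrac12\|v\|^2 - \tfrac1p\int_{\eps\o}|v|^p \ge \tfrac12\|v\|^2 - \tfrac{C}{p}\|v\|^p$ by the Sobolev embedding; as $p>2$ this is bounded below by a positive constant on a sphere $\|v\|=\rho$ of small radius. For the second condition I would fix a $k$-dimensional subspace $V_k$ of functions supported in the bounded, nonempty set $\eps\o$: for such $v$ the defocusing term vanishes, giving $J_\eps(v) = \tfrac12\|v\|^2 - \tfrac1p\int_{\eps\o}|v|^p$, and the equivalence of norms on the finite-dimensional $V_k$ together with $p>2$ forces $J_\eps\to -\infty$ as $\|v\|\to\infty$ within $V_k$.

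The main obstacle is the Palais--Smale condition, since the problem lives on all of $\rn$ and compactness could be lost through mass escaping to infinity. Here the sign of $Q_\eps$ is decisive: the focusing (wrong-sign) part of the nonlinearity is confined to the bounded core $\eps\o$, where the embedding $H^1\hookrightarrow L^p$ is compact, whereas outside $\eps\o$ the coercive sign of the nonlinearity rules out concentration at infinity (the limit equation at infinity, $-\Delta w+w=-|w|^{p-2}w$, admits only the trivial finite-energy solution). Concretely, given a $(PS)$ sequence $v_n$, the identity $J_\eps(v_n)-\tfrac1p J_\eps'(v_n)[v_n]=(\tfrac12-\tfrac1p)\|v_n\|^2$ yields boundedness, so $v_n\rightharpoonup v$ after passing to a subsequence; setting $w_n:=v_n-v\rightharpoonup 0$, the compact embedding gives $\int_{\eps\o}|w_n|^p\to 0$, and combining the Brezis--Lieb splitting of $\|v_n\|^2$ with the equation leads to $\|w_n\|^2+\int_{\rn\smallsetminus\eps\o}|w_n|^p=o(1)$, whence $v_n\to v$ strongly in $H^1(\rn)$. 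With $(PS)$ and the two geometric conditions established, the symmetric mountain pass theorem completes the argument.
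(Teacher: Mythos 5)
Your proposal is correct in substance but takes a genuinely different route from the paper. The paper first passes to the equivalent rescaled problem~\eqref{eq:Q_problem}, restricts $J_\eps$ to the Nehari manifold $\cN_\eps$, proves the Palais--Smale condition for the \emph{constrained} functional (via a Fatou-type argument exploiting that $\o$ is bounded and $Q=-1$ outside $\o$), shows $\gen(\cN_\eps)=\infty$ through the odd continuous map defined on the unit sphere of $H_0^1(\o)$, and then invokes genus theory (Struwe's Theorem II.5.7, together with the argument of Rabinowitz's Proposition 9.33 to obtain an unbounded sequence of critical values). You instead apply the symmetric mountain pass theorem directly to the free functional on $H^1(\rn)$. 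The mechanisms underneath are the same in both arguments: compactness comes from the defocusing sign of $Q_\eps$ at infinity, and the multiplicity comes from the infinite-dimensional space of functions supported in the self-focusing core. Your free $(PS)$ verification via the Brezis--Lieb splitting (Rellich gives $\int_{\eps\o}|w_n|^p\to 0$, whence $\|w_n\|^2+\int_{\rn\smallsetminus\eps\o}|w_n|^p\to 0$) is a clean and legitimate alternative to the paper's constrained Fatou argument; the Nehari formulation buys the paper something you do not need for this theorem, namely the variational characterization of the least energy levels $c_\eps$ used later in the limit-profile results.

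One point in your sketch needs care. The symmetric mountain pass theorem as you state it (and as stated, e.g., in Rabinowitz's Theorem 9.12) requires $J_\eps\le 0$ outside a ball on \emph{every} finite-dimensional subspace, and that hypothesis is false for this functional: if $v\neq 0$ is supported in $\rn\smallsetminus\eps\o$, then $J_\eps(tv)=\frac{t^2}{2}\|v\|^2+\frac{t^p}{p}\irn|v|^p\to+\infty$ as $t\to\infty$, so the condition fails already on the line $\r v$. You verify it only on subspaces of $H_0^1(\eps\o)$ --- which is indeed all that can be verified. This is a fixable mismatch rather than a fatal gap: the proof of the theorem uses the hypothesis only along a fixed nested sequence $E_1\subset E_2\subset\cdots$ with $\dim E_k=k$, over which the minimax classes are built, so it suffices to choose $E_k\subset H_0^1(\eps\o)$ (possible because $\eps\o$ is a nonempty open set, so $H_0^1(\eps\o)$ is infinite-dimensional). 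But you must either cite a version of the theorem formulated for such a sequence of subspaces or note explicitly that the standard proof needs no more; the paper's genus argument rests on exactly the same fact, through its odd map from the sphere in $H_0^1(\o)$ into $\cN_\eps$.
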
 

The following result is an easy extension of \cite[Theorem 1.1]{fw} to an arbitrary domain $\o$.

\begin{theorem} \label{thm:main_positive}
Let $\eps_n\to 0$. For each $n$ the problem~\eqref{eq:Q_eps_problem} with $\eps=\eps_n$ has a positive least energy solution $v_n$. Set $u_n(x):=\eps_n^\frac{2}{p-2}v_n(\eps_nx)$. Then, after passing to a subsequence, $(u_n)$ converges strongly in $E$ to a positive least energy solution of the limit problem~\eqref{eq:limit_problem}. As a consequence, for any $\vr>0$,
\begin{equation} \label{eq:concentration}
\lim_{n\to\infty}\frac{\int_{|x|\leq\vr}(|\nabla v_n|^2+v_n^2)}{\irn(|\nabla v_n|^2+v_n^2)}=1\qquad\text{and}\qquad\lim_{n\to\infty}\frac{\int_{|x|\leq\vr}|v_n|^p}{\irn |v_n|^p}=1.
\end{equation}
\end{theorem}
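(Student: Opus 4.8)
The plan is to follow the strategy of \cite{fw} and to check that it never uses the radial symmetry of the ball, so that it applies verbatim to an arbitrary bounded $\o$. Write $J_\eps(v)=\tfrac12\irn(|\nabla v|^2+v^2)-\tfrac1p\irn Q_\eps|v|^p$ for the energy of~\eqref{eq:Q_eps_problem}, whose positive least energy solution $v_n$ is provided by \cite{as}. A direct computation shows that $u_n(x)=\eps_n^{2/(p-2)}v_n(\eps_n x)$ solves $-\Delta u_n+\eps_n^2u_n=Q(x)|u_n|^{p-2}u_n$, whose energy functional is $I_\eps(u)=\tfrac12\irn(|\nabla u|^2+\eps^2u^2)-\tfrac1p\irn Q|u|^p$, and that $I_{\eps_n}(u_n)=\eps_n^{\frac{2p}{p-2}-N}J_{\eps_n}(v_n)$. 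Since the rescaling maps the Nehari manifold of $J_{\eps_n}$ bijectively onto that of $I_{\eps_n}$ and scales the energy by this fixed factor, $u_n$ is a positive least energy solution of the rescaled problem, so $c_n:=I_{\eps_n}(u_n)$ is its positive least energy level and $c_n=(\tfrac12-\tfrac1p)(\irn|\nabla u_n|^2+\eps_n^2\irn u_n^2)$. Denote by $\Phi(w)=\tfrac12\irn|\nabla w|^2-\tfrac1p\irn Q|w|^p$ the functional of the limit problem~\eqref{eq:limit_problem}, by $\cN$ its Nehari manifold, and set $c_\infty:=\inf_{\cN}\Phi>0$.

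First I would show $c_n\to c_\infty$. For the upper bound, take $w\in\cN$ with $\Phi(w)<c_\infty+\delta$, truncate it to a compactly supported $w_R\in H^1(\rn)\cap E$, project $w_R$ onto the Nehari manifold of $I_{\eps_n}$, and let $\eps_n\to0$ and then $R\to\infty$; because the projection constant tends to $1$, a diagonal argument yields $\limsup_n c_n\le\Phi(w)$, hence $\limsup_n c_n\le c_\infty$. The Nehari identity together with the energy bound gives a uniform bound for $(u_n)$ in $E$, so, after passing to a subsequence, $u_n\rh u$ in $E$.

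The crux is compactness, and this is the step I expect to be the main obstacle, since $E$ lacks a compact embedding into $L^p(\rn)$ and one must rule out vanishing and escape of mass to infinity. Here the boundedness of $\o$ and the sign change of $Q$ do all the work, independently of the shape of $\o$. From $c_n\ge\mathrm{const}>0$ and the Nehari identity, $\irn Q|u_n|^p$ stays bounded away from $0$; since $\int_{\rn\smallsetminus\o}|u_n|^p\ge0$, this forces $\int_\o|u_n|^p$ to stay bounded below. As $\o$ is bounded and $p<2^*$, the embedding $D^{1,2}(\rn)\hookrightarrow L^p(\o)$ is compact, so $u_n\to u$ in $L^p(\o)$ and hence $u\neq0$. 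Passing to the limit in the weak formulation (the term $\eps_n^2u_n$ drops out because $\eps_n^2\irn u_n^2$ is bounded) shows that $u$ solves~\eqref{eq:limit_problem}, so $u\in\cN$ and $\Phi(u)\ge c_\infty$; weak lower semicontinuity and $c_n\to c_\infty$ give $\Phi(u)\le c_\infty$. Thus $u$ is a least energy solution, $\irn|\nabla u_n|^2\to\irn|\nabla u|^2$, $\eps_n^2\irn u_n^2\to0$, and $u_n\to u$ strongly in $D^{1,2}(\rn)$. For the $L^p$ norm I would use the Brezis--Lieb lemma: with $z_n:=u_n-u\rh0$, the boundedness of $\o$ gives $\int_\o|z_n|^p\to0$, while $\irn Q|u_n|^p\to\irn Q|u|^p$ gives $\irn Q|z_n|^p\to0$; the two together force $\int_{\rn\smallsetminus\o}|z_n|^p\to0$, so $u_n\to u$ in $L^p(\rn)$ and hence in $E$. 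Positivity of $u$ follows from $u_n>0$ and the strong convergence, and strict positivity from the maximum principle.

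Finally, the concentration~\eqref{eq:concentration} follows by undoing the rescaling. The change of variables $x=y/\eps_n$ turns the two ratios into
\[
\frac{\int_{|x|\le\vr/\eps_n}|\nabla u_n|^2+\eps_n^2\int_{|x|\le\vr/\eps_n}u_n^2}{\irn|\nabla u_n|^2+\eps_n^2\irn u_n^2}\qquad\text{and}\qquad\frac{\int_{|x|\le\vr/\eps_n}|u_n|^p}{\irn|u_n|^p}.
\]
Since $u$ is a least energy solution, the Nehari identity for $\Phi$ gives $\tfrac{2p}{p-2}c_\infty=\irn|\nabla u|^2$, so the rescaled $L^2$ terms are of lower order, namely $\eps_n^2\irn u_n^2\to0$. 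As $\vr/\eps_n\to\infty$ and $u_n\to u$ strongly in $D^{1,2}(\rn)$ and in $L^p(\rn)$, both numerators converge to the full-space integrals of $u$, and both ratios tend to $1$.
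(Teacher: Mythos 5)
Your proposal is correct and follows essentially the same route as the paper: convergence of the least-energy levels via Nehari projections of compactly supported approximations, an $E$-bound and nonvanishing driven by the boundedness of $\o$ together with the sign change of $Q$ and the compact embedding $D^{1,2}(\rn)\hookrightarrow L^p(\o)$, strong convergence forced by the resulting chain of inequalities, and the change-of-variables computation for~\eqref{eq:concentration} using $\eps_n^2\irn u_n^2\to 0$. The only harmless local differences are that you pass to the limit in the weak formulation and use Brezis--Lieb to upgrade to strong $L^p(\rn)$ convergence, where the paper stays at the Nehari level (Fatou's lemma yields $t\in(0,1]$ with $tu\in\cN_0$, then the energy chain forces $t=1$, and norm convergence plus a.e.\ convergence replaces Brezis--Lieb), and that you cite \cite{as} for the existence of $v_n$ (and assert the uniform lower bound $c_n\geq\mathrm{const}>0$ without its one-line Sobolev proof), whereas the paper proves both self-containedly via Ekeland's principle and the Palais--Smale condition in Theorem~\ref{thm:multiplicity}.
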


We obtain a similar result for sign-changing solutions.

\begin{theorem} \label{thm:main_nodal}
Let $\eps_n\to 0$. For each $n$ the problem~\eqref{eq:Q_eps_problem} with $\eps=\eps_n$ has a least energy nodal solution $v_n$. Set $u_n(x):=\eps_n^\frac{2}{p-2}v_n(\eps_nx)$. Then, after passing to a subsequence, $(u_n)$ converges strongly in $E$ to a least energy nodal solution of the limit problem~\eqref{eq:limit_problem}. As a consequence, for any $\vr>0$, $(v_n)$ satisfies~\eqref{eq:concentration}.
\end{theorem}

Theorems~\ref{thm:main_positive} and~\ref{thm:main_nodal} state, in particular, that the limit problem~\eqref{eq:limit_problem} has a positive and a nodal least energy solution. Note that this is not true if we replace $Q$ by a constant function. Actually, the problem
$$-\Delta w=\kappa|w|^{p-2}w,\qquad w\in E.$$
has only the trivial solution for any $\kappa\in\r$. This is obvious if $\kappa\leq 0$ and it is a consequence of the Pohozhaev identity \cite[Theorem B.3]{w} if $\kappa>0$.

The proof of Theorems \ref{thm:main_multiplicity}, \ref{thm:main_positive}, and \ref{thm:main_nodal} is via variational methods. The fact that $Q=-1$ in $\rn\smallsetminus \Omega$ is crucial to guarantee compactness.

Now we turn our attention to understanding the shape and qualitative properties of the limiting profiles. A first remark is that standard regularity arguments show that every solution of the limit problem \eqref{eq:limit_problem} belongs to $W^{2,s}_{loc}(\rn)\cap \cC^{1,\alpha}_{loc}(\rn)\cap \cC_{loc}^\infty(\Omega)\cap \cC_{loc}^\infty(\rn\smallsetminus \overline{\Omega})$ for all $s\in[1,\infty)$ and $\alpha\in(0,1)$ (see Lemma~\ref{lem:regularity_limit} below). 

Our next result gives information on the symmetries of the positive and the nodal least energy solutions of \eqref{eq:limit_problem} when $\o$ is a radially symmetric open bounded set.

\begin{theorem} \label{thm:symmetries}
\begin{itemize}
\item[$(i)$] Let $\o$ be the unit ball in $\rn$ centered at the origin. Then, every positive least energy solution of~\eqref{eq:limit_problem} is radially symmetric and strictly decreasing in the radial direction.
\item[$(ii)$] Let $\o$ be a radially symmetric open bounded subset of $\rn$ with smooth boundary. Then, any least energy solution and any least energy nodal solution of \eqref{eq:limit_problem} is foliated Schwarz symmetric in $\rn$.
\end{itemize}
\end{theorem}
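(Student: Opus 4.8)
The plan is to exploit the variational characterization of the least energy (nodal) solutions together with symmetrization and reflection (polarization) arguments. Writing $J(w)=\tfrac12\irn|\nabla w|^2-\tfrac1p\irn Q|w|^p$ for the energy associated with~\eqref{eq:limit_problem}, a positive least energy solution minimizes the scale-invariant quotient $w\mapsto (\irn|\nabla w|^2)^{p/(p-2)}(\irn Q|w|^p)^{-2/(p-2)}$ over $\{w\in E:\irn Q|w|^p>0\}$, while a least energy nodal solution minimizes $J$ over the nodal Nehari set. Here I would use that, since $u^+u^-=0$ and $\nabla u^+\cdot\nabla u^-=0$, the constraint decouples as $\langle J'(u),u^\pm\rangle=\irn|\nabla u^\pm|^2-\irn Q|u^\pm|^p$, so on the nodal Nehari set $J(u)=(\tfrac12-\tfrac1p)(\irn|\nabla u^+|^2+\irn|\nabla u^-|^2)$ and each part minimizes its own quotient.

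For part $(i)$, with $\o$ the unit ball $B_1$, I would apply the Schwarz symmetrization $u\mapsto u^*$. The P\'olya--Szeg\H{o} inequality gives $\irn|\nabla u^*|^2\le\irn|\nabla u|^2$, and since $\mathbf 1_{B_1}$ is itself radially nonincreasing, the Hardy--Littlewood inequality yields $\int_{B_1}|u|^p\le\int_{B_1}|u^*|^p$; as $\irn|u|^p$ is preserved, this gives $\irn Q|u^*|^p\ge\irn Q|u|^p>0$. Hence $u^*$ does not increase the quotient, so a positive least energy solution must realize equality in both rearrangement inequalities. Equality in P\'olya--Szeg\H{o} (Brothers--Ziemer), together with the fact that the equation forces the level sets $\{u=c\}$ with $0<c<\max u$ to be null (so the degeneracy condition holds), forces $u$ to be a translate of $u^*$; the translation is pinned to the origin because, by Riesz's inequality and the same nondegeneracy, $\int_{B_1}|u^*(\cdot-y)|^p$ is strictly maximized at $y=0$. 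Thus $u$ is radial and nonincreasing. Strict monotonicity then follows from the radial equation $(r^{N-1}u')'=-r^{N-1}Q(r)u^{p-1}$: inside the ball $r^{N-1}u'$ strictly decreases from $0$, giving $u'<0$ on $(0,1)$, and outside the ball $r^{N-1}u'$ strictly increases, which together with $u>0$ and $u\to0$ as $r\to\infty$ forces $u'<0$ on $(1,\infty)$.

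For part $(ii)$, with $\o$ radial, I would use polarizations $u_H$ relative to open half-spaces $H$ with $0\in\partial H$. Since $Q$ is radial, $Q\circ\sigma_H=Q$, so on each reflected pair $\{x,\sigma_Hx\}$ the value of $Q$ is constant; hence polarization preserves $\irn Q|w|^p$ while not increasing $\irn|\nabla w|^2$. For a positive least energy solution this shows $u_H$ again minimizes the quotient, whence equality holds in P\'olya--Szeg\H{o} and $u_H$ is itself a least energy solution of~\eqref{eq:limit_problem}. For a nodal least energy solution I would use the identities $(u_H)^+=(u^+)_H$ and $(u_H)^-=(u^-)^H$: polarization moves the positive part toward $H$ and the negative part toward $\sigma_H(H)$, the two parts of the single function $u_H$ remain supported on disjoint sets, and the decoupled Nehari quotients of $(u_H)^\pm$ do not increase; minimality then forces $u_H$ to be a nodal least energy solution as well.

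The final and most delicate step is the comparison. By construction $u_H\ge u$ in $H$, and since both $u$ and $u_H$ solve~\eqref{eq:limit_problem}, the difference $w:=u_H-u\ge0$ satisfies a linear equation $-\Delta w=c(x)w$ in $H$ with $c\in L^\infty_{loc}$. The main obstacle is that $c$ is only bounded and measurable, since $Q$ jumps across $\partial\o$, so the classical $\cC^2$ maximum principle is unavailable; instead I would invoke the weak Harnack inequality (De Giorgi--Nash--Moser) for $-\Delta w+Kw\ge0$, valid for merely $L^\infty$ coefficients, to conclude that either $w\equiv0$ in $H$ or $w>0$ in the interior of $H$. In the first case $u_H=u$, and in the second $u_H=u\circ\sigma_H$ in $H$; in either case $u$ is comparable with its reflection across $\partial H$. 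As this holds for every such $H$, the polarization characterization of foliated Schwarz symmetry (Smets--Willem, Bartsch--Weth--Willem) yields the claim for both the positive and the nodal least energy solutions. I expect the interface regularity feeding the maximum principle (provided by Lemma~\ref{lem:regularity_limit} and the smoothness of $\partial\o$) and, in part $(i)$, the verification of the equality case across $\partial\o$ and the pinning of the translation, to be the points requiring the most care.
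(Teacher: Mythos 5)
Your part $(i)$ is essentially the paper's own argument: Schwarz symmetrization with the splitting of $\irn Q|u|^p$ into the ball and its complement (your Hardy--Littlewood step is the paper's observation that $\int_{B_1}|u^*|^p\geq\int_{B_1}|u|^p$, with equimeasurability handling the exterior term), projection back onto the Nehari manifold, the Brothers--Ziemer equality case to get $u=u^*(\cdot+\xi)$, and pinning $\xi=0$ by strict maximality of $\int_{B_1}|u^*(\cdot+\xi)|^p$; the paper derives strict radial decrease from the PDE satisfied by $u^*$ (it cannot be constant on an annulus) rather than from your radial ODE, but the two are interchangeable.

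Part $(ii)$ is where you genuinely diverge, and your route is more direct than the paper's. Both proofs start from the same polarization-invariance lemma (Lemma~\ref{lem:u_H}: with $Q$ radial, $u_H$ is again a least energy positive or nodal solution; note that polarization preserves the Dirichlet integral \emph{exactly}, so your appeal to ``equality in P\'olya--Szeg\H{o}'' is a misnomer, and the identity you want is $(u_H)^-=(u^-)_H$). From there the paper, following Bartsch--Weth--Willem, works with $|u-u_e|=2u_H-(u+u_e)$, whose Laplacian has a sign only where $Q$ does; it therefore gets the reflection dichotomy only in $H(e)\cap\o$, concludes foliated Schwarz symmetry in $\o$, extends axial symmetry to $\rn$ by a maximum principle in $H(e)\cap(\rn\smallsetminus\o)$ for $e\perp e^*$ (where the boundary data vanish), and finally proves angular monotonicity outside $\o$ by a separate maximum-principle computation for $v=\partial_\theta u$ in hyperspherical coordinates, where the defocusing sign produces the favorable zero-order term $+(p-1)|u|^{p-2}v$. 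You instead compare the two exact solutions $u$ and $u_H$ directly: $w:=u_H-u\geq 0$ in $H(e)$ solves $-\Delta w=Q(x)c(x)w$ with $0\leq c\in L^\infty_{loc}$, and after absorbing the zero-order term ($-\Delta w+(Qc)^-w\geq 0$) the strong maximum principle/weak Harnack inequality on the connected set $H(e)$ yields $u\geq u_e$ or $u<u_e$ in \emph{all} of $H(e)$, with no case distinction across $\partial\o$. This is sound: the sign change of $Q$, which the paper flags as the main obstacle, is harmless in your formulation precisely because you never need $\Delta|u-u_e|$ to have a sign, and it renders the paper's axial-extension and angular-derivative steps unnecessary (your argument also never uses smoothness of $\partial\o$). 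Two points you should make airtight. First, the polarization characterization of foliated Schwarz symmetry (the paper's Lemma~\ref{crit}) is stated for bounded radial domains; on $\rn$ you should either cite a version valid there or observe that your global dichotomy restricts to every ball $B_R$ and that the resulting symmetry axis is independent of $R$ (it is uniquely determined once $u$ is nonradial on some $B_{R_0}$, and the claim is trivial if $u$ is radial on every ball). Second, the regularity needed to apply the strong maximum principle to $w$ across $\partial\o$ is that $u_H\in W^{2,N}_{loc}(\rn)$, which holds by Lemma~\ref{lem:regularity_limit} \emph{because} $u_H$ is itself a least energy solution; this deserves an explicit sentence, since $w$ is a priori only $\cC^{1,\alpha}$ near the interface.
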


Note that Theorem \ref{thm:symmetries} $(ii)$ does not require that $\Omega$ is connected. So $\Omega$ may be, for instance, the disjoint union of a finite number of annuli centered at the origin. Theorem \ref{thm:symmetries} $(ii)$ says that any positive or nodal least energy solution is invariant under every rotation around some fixed axis in $\rn$ and that it is nonincreasing with respect to the polar angle; see Subsection \ref{fss:sec} for the precise definition of foliated Schwarz symmetry.

In the proof of Theorem~\ref{thm:symmetries} we use rearrangements (symmetrizations and polarizations).  Here, the main obstacle is that the nonlinearity changes sign (and, therefore, its monotonicity). However, we show that the influence of the equation in $\o$ is the predominant one to establish the symmetry in the whole of $\rn.$

Finally, we study the decay at infinity of solutions to~\eqref{eq:limit_problem}. We write $B_\vr$ for the ball of radius $\vr$ in $\rn$ centered at the origin.

\begin{theorem} \label{thm:decay}
\begin{itemize}
\item[$(i)$] Let $w$ be a solution of~\eqref{eq:limit_problem}. Then there exists $C>0$ (depending on $w$) such that
$$|w(x)|\leq C|x|^{2-N}\qquad\text{for every \ }x\in\rn.$$
\item[$(ii)$] Assume that $p\in(\frac{2N-2}{N-2},\frac{2N}{N-2})$ and let $\vr>0$ be such that $\o\subset B_\vr$. If $w$ is a positive solution of~\eqref{eq:limit_problem} then, for any given $\delta>0$, there exists $C_\delta>0$ (depending on $w$ and $\delta$) such that
$$w(x)\geq C_\delta|x|^{2-N-\delta}\qquad\text{for every \ }x\in\rn\smallsetminus B_\vr.$$
\end{itemize}
\end{theorem}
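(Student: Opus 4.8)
The plan is to establish both bounds by comparison with explicit radial barriers, exploiting that on $\rn\smallsetminus\o$ the equation reads $-\Delta w=-|w|^{p-2}w$ and therefore has a favorable sign. A common preliminary step is to show that $w(x)\to 0$ as $|x|\to\infty$: since $w\in E\hookrightarrow L^{2^*}(\rn)\cap L^p(\rn)$ and, by Lemma~\ref{lem:regularity_limit}, $w\in\cC^{1,\alpha}_{loc}(\rn)$, the standard local bound $\sup_{B_1(x_0)}|w|\le C\bigl(\|w\|_{L^{2^*}(B_2(x_0))}+\||w|^{p-1}\|_{L^{s}(B_2(x_0))}\bigr)$ together with $\|w\|_{L^{2^*}(\rn\smallsetminus B_R)}\to 0$ yields uniform decay to zero at infinity. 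Fix once and for all $\vr>0$ with $\o\subset B_\vr$ and set $M:=\max_{\overline{B_\vr}}|w|$.

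For part $(i)$ I would show that $|w|$ is subharmonic outside $\o$. By Kato's inequality, $\Delta|w|\ge\operatorname{sgn}(w)\,\Delta w=\operatorname{sgn}(w)\,|w|^{p-2}w=|w|^{p-1}\ge 0$ in $\rn\smallsetminus\overline{\o}$ in the distributional sense. The function $\phi(x):=M\vr^{N-2}|x|^{2-N}$ is harmonic in $\rn\smallsetminus\{0\}$, satisfies $\phi=M\ge|w|$ on $\partial B_\vr$, and $\phi\to 0$ at infinity. Hence $|w|-\phi$ is subharmonic on $\rn\smallsetminus B_\vr$, nonpositive on $\partial B_\vr$, and tends to $0$ at infinity; the weak maximum principle on $\rn\smallsetminus B_\vr$ (applied on annuli $\vr\le|x|\le R$ and letting $R\to\infty$, using the decay of both functions) gives $|w|\le\phi$. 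Since $\phi(x)\ge M\ge|w(x)|$ also for $|x|\le\vr$, we obtain $|w(x)|\le C|x|^{2-N}$ on all of $\rn$ with $C:=M\vr^{N-2}$.

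For part $(ii)$, with $w>0$, I would write the equation on $\rn\smallsetminus\o$ in the linear form $(-\Delta+c)w=0$ with nonnegative potential $c(x):=w(x)^{p-2}$. By part $(i)$, $c(x)\le C^{p-2}|x|^{-\gamma}$ with $\gamma:=(N-2)(p-2)$, and the hypothesis $p>\frac{2N-2}{N-2}$ is precisely what guarantees $\gamma>2$, i.e. that $c$ decays faster than $|x|^{-2}$. Fixing $\delta\in(0,\gamma-2)$ I would use the subharmonic subsolution $\underline w(x):=A\,|x|^{2-N-\delta}$; a direct computation gives $-\Delta\underline w=-A\,\delta(N-2+\delta)\,|x|^{-N-\delta}$, so
\begin{equation*}
(-\Delta+c)\underline w\le A|x|^{-N-\delta}\bigl(-\delta(N-2+\delta)+C^{p-2}|x|^{2-\gamma}\bigr)\le 0\qquad\text{for }|x|\ge R_\delta,
\end{equation*}
where $R_\delta$ is chosen large enough that $C^{p-2}|x|^{2-\gamma}\le\delta(N-2+\delta)$; here $\gamma>2$ is essential, and the fact that $R_\delta$ grows as $\delta\to 0$ is the source of the $\delta$–loss in the exponent. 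Choosing $A>0$ small enough that $\underline w\le w$ on $\partial B_{R_\delta}$ (possible since $w$ is continuous and strictly positive on the compact sphere) and recalling $\underline w-w\to 0$ at infinity, the maximum principle for $-\Delta+c$ on $\rn\smallsetminus B_{R_\delta}$ yields $w\ge\underline w=A|x|^{2-N-\delta}$ there; absorbing the compact annulus $\vr\le|x|\le R_\delta$, on which $w$ has a positive lower bound, into the constant gives $w(x)\ge C_\delta|x|^{2-N-\delta}$ on $\rn\smallsetminus B_\vr$.

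The routine parts are the local regularity input and the verification that the barriers have the claimed sign. The main obstacle, and where the hypotheses genuinely enter, is twofold: justifying the maximum principle on the unbounded exterior domain (which is why the decay $w\to0$ at infinity must be established first), and, for $(ii)$, extracting from part $(i)$ a zeroth–order coefficient that decays strictly faster than $|x|^{-2}$ — precisely the role of the lower restriction $p>\frac{2N-2}{N-2}$ — so that the subharmonic barrier $|x|^{2-N-\delta}$ can be fed into the comparison.
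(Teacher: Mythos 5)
Your proof is correct, but it takes a genuinely different route from the paper's, which decomposes the theorem into four comparison lemmas. For part $(i)$ the paper first bounds a \emph{positive} solution by a multiple of the fundamental solution (Lemma~\ref{lem:upper_bound_positive}) and then controls an arbitrary solution $u$ through $|u|\leq Cw$ for a positive solution $w$, via the operator $-\Delta+c$ with $c\geq 0$ (Lemma~\ref{lem:upper_bound}); you compress both steps into one by using Kato's inequality to make $|w|$ subharmonic outside $\overline{\o}$ and comparing it directly with a harmonic multiple of $|x|^{2-N}$ --- more elementary, and it does not invoke the existence of a positive solution. For part $(ii)$ the divergence is greater: the paper first proves the lower bound for the \emph{radial least-energy} solution in a ball (Lemma~\ref{lem:lower_bound_ball}), using the symmetry result of Theorem~\ref{thm:symmetries}$(i)$ and an ODE-flavored argument built around the crossing radius $R_\alpha$ where $u$ first meets $\alpha r^{2-N}$, and then transfers the bound to arbitrary positive solutions on general $\o$ by a further comparison (Lemma~\ref{lem:domain_comparison}); you instead linearize directly on the exterior, noting that the potential $c=w^{p-2}$ decays like $|x|^{-\gamma}$ with $\gamma=(N-2)(p-2)>2$ exactly when $p>\frac{2N-2}{N-2}$ --- the same inequality the paper exploits in the form $(N-2)(p-1)-N=\gamma-2>0$ --- and run the maximum principle against the barrier $A|x|^{2-N-\delta}$. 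Your route needs no symmetry theorem, no auxiliary least-energy solution in $B_\vr$, and no domain-comparison lemma, so it is shorter and, at the level of this theorem, more general (it applies to any positive solution directly); what the paper's route buys are the by-product estimates $|u|\leq Cw$ and $w\geq Cu$ between solutions, and a proof of $(ii)$ that does not feed part $(i)$ into part $(ii)$ as yours does. You are also more careful than the paper in justifying the maximum principle on unbounded domains by first establishing $w(x)\to 0$ as $|x|\to\infty$; for that preliminary step the mean-value inequality for the subharmonic function $|w|$ outside $\o$ would be even simpler than your local $L^\infty$ estimate, whose uniform control of the nonlinear term you only sketch. Two small points to record: proving the bound for small $\delta$ suffices, since for $|x|\geq\vr$ the bound with exponent $2-N-\delta$ implies the one with any larger $\delta$ after adjusting the constant (the paper makes the analogous reduction by assuming $p>\frac{2N-2+\delta}{N-2}$ without loss of generality); and your conclusion $w\geq C_\delta|x|^{2-N-\delta}$ is what Lemma~\ref{lem:lower_bound_ball} actually proves --- the exponent $2-N+\delta$ printed in the statement of Theorem~\ref{thm:decay}$(ii)$ is a sign typo, being incompatible with the upper bound of part $(i)$ for large $|x|$.
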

The number $\frac{2N-2}{N-2}$ is sometimes called the Serrin exponent, and it plays a role in the analysis of nonlinear problems with isolated singularities, see for instance \cite{v}.  The proof of Theorem~\ref{thm:decay} is carried out by building suitable sub and supersolutions, first in the radially symmetric setting, and then by extending this bounds by comparison to the general case.  It is interesting to remark that solutions of~\eqref{eq:Q_eps_problem} have exponential decay (see Remark~\ref{exp:rmk}), but the limit profiles have polynomial decay. We note that one cannot expect  statement $(ii)$ in Theorem~\ref{thm:decay} to be always true for small values of $p$. It is certainly not if $N=3$, see Remark~\ref{rem:lower bound}. The following question remains open.

\begin{problem}
Establish the precise decay of the solutions to problem~\eqref{eq:limit_problem} for every $p\in(2,2^*)$.
\end{problem}

The paper is organized as follows.  In Section~\ref{e:sec} we prove Theorem~\ref{thm:main_multiplicity} and the existence of a positive least energy solution of~\eqref{eq:Q_eps_problem}. Its limit profile is characterized in Section~\ref{poslim:sec}, where we prove Theorem~\ref{thm:main_positive}.  The existence of a least energy nodal solution of~\eqref{eq:Q_eps_problem} is studied in Section~\ref{n:sec} and its limit profile is characterized in Section~\ref{nodlimit:sec}. The proof of Theorem~\ref{thm:main_nodal} is given in that same section.  Finally, the proof of the decay estimates in Theorem~\ref{thm:decay} is given in Section~\ref{d:sec} and the proof of the symmetry results stated in Theorem~\ref{thm:symmetries} can be found in Section~\ref{s:sec}.

\section{Existence of positive least energy solutions}\label{e:sec}

Fix $\eps>0$. Setting $u(x):=\eps^\frac{2}{p-2}v(\eps x)$, the problem~\eqref{eq:Q_eps_problem} turns out to be equivalent to
\begin{equation} \label{eq:Q_problem}
\begin{cases}
-\Delta u+\eps^2 u=Q(x)|u|^{p-2}u,\\
u \in H^1(\rn),
\end{cases} 
\end{equation}
that is, $u$ is a solution of~\eqref{eq:Q_problem} if and only if $v$ is a solution of~\eqref{eq:Q_eps_problem}. Let
\begin{equation}\label{eq:eps_inner product}
\langle u,v\rangle_\eps:=\irn(\nabla u\cdot\nabla v + \eps^2 uv)\qquad\text{and}\qquad \|u\|_\eps^2:=\irn(|\nabla u|^2 + \eps^2u^2).
\end{equation}
The solutions of~\eqref{eq:Q_problem} are the critical points of the functional $J_\eps:H^1(\rn)\to\r$ given by
$$J_\eps(u):=\frac{1}{2}\irn(|\nabla u|^2 + \eps^2u^2) - \frac{1}{p}\irn Q(x)|u|^p,$$
which is of class $\cC^2$. Its derivative is
$$J'_\eps(u)v=\irn(\nabla u\cdot\nabla v + \eps^2 uv) - \irn Q(x)|u|^{p-2}uv.$$
The nontrivial critical points of $J_\eps$ belong to the Nehari manifold
$$\cN_\eps:=\{u\in H^1(\rn):u\neq 0, \ J'_\eps(u)u=0\},$$
which is a Hilbert submanifold of $H^1(\rn)$ of class $\cC^2$ and a natural constraint for $J_\eps$. Note that
\begin{equation} \label{eq:energy_on_nehari}
J_\eps(u)=\frac{p-2}{2p}\|u\|_\eps^2=\frac{p-2}{2p}\irn Q(x)|u|^p\qquad\text{if \ }u\in\cN_\eps.
\end{equation}
Since $\|\cdot\|_\eps^2$ a norm in $H^1(\rn)$, equivalent to the standard one, using Sobolev's inequality we see that 
$$c_\eps:=\inf_{u\in\cN_\eps}J_\eps(u)>0.$$
Not every element of $H^1(\rn)$ admits a radial projection onto $\cN_\eps$. Those that do belong to the set
\begin{equation} \label{eq:U}
\cU:=\{u\in L^p(\rn):\irn Q(x)|u|^p>0\}. 
\end{equation}

\begin{lemma}\label{lem:t_u}
For any $u\in H^1(\rn)\cap\cU$ there exists a unique $t_{u}\in(0,\infty)$ such that $t_{u}u\in\cN_\eps$. Explicitly,
$$t_{u}=\left(\frac{\|u\|_\eps^2}{\irn Q(x)|u|^p}\right)^\frac{1}{p-2}\qquad\text{and}\qquad J_\eps(t_uu)=\frac{p-2}{2p}\left(\frac{\|u\|_\eps^2}{\Big(\irn Q(x)|u|^p\Big)^{2/p}}\right)^\frac{p}{p-2}.$$
The function $J_{\eps,u}(t):=J_\eps(tu)$ is strictly increasing in $[0,t_{u}]$ and strictly decreasing in $[t_{u},\infty)$.
\end{lemma}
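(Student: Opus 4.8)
The plan is to study the fibering map $J_{\eps,u}(t)=J_\eps(tu)$ as an explicit function of the single real variable $t\geq 0$. Writing $A:=\|u\|_\eps^2$ and $B:=\irn Q(x)|u|^p$, the definition of $J_\eps$ gives $J_{\eps,u}(t)=\tfrac{A}{2}t^2-\tfrac{B}{p}t^p$. The only hypothesis I would invoke is that $u\in H^1(\rn)\cap\cU$: this forces $A>0$ (since $u\neq 0$ and $\|\cdot\|_\eps$ is a norm) and, by the definition~\eqref{eq:U} of $\cU$, also $B>0$. The whole statement is then elementary calculus for this two-term function, so I expect no serious obstacle; the only point that deserves emphasis is precisely why $\cU$ is the natural domain on which a radial projection exists.

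First I would record that, for $t>0$, the condition $tu\in\cN_\eps$ is equivalent to $J_{\eps,u}'(t)=0$. Indeed, by the chain rule $J'_\eps(tu)(tu)=t\,J_{\eps,u}'(t)$, and a direct computation gives $J_{\eps,u}'(t)=At-Bt^{p-1}=t\,(A-Bt^{p-2})$. Since $p>2$, the map $t\mapsto t^{p-2}$ is a strictly increasing bijection of $(0,\infty)$ onto itself, so the equation $A-Bt^{p-2}=0$ has the unique positive root $t_u=(A/B)^{1/(p-2)}$, which is exactly the claimed formula. This yields both the existence and the uniqueness of $t_u$.

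The same factorization delivers the monotonicity: for $t>0$ the sign of $J_{\eps,u}'(t)$ equals the sign of $A-Bt^{p-2}$, which is positive for $t<t_u$ and negative for $t>t_u$. Hence $J_{\eps,u}$ is strictly increasing on $[0,t_u]$ and strictly decreasing on $[t_u,\infty)$, the endpoints being included by continuity. Finally, to obtain the energy value I would substitute $t=t_u$ into the Nehari identity~\eqref{eq:energy_on_nehari}, giving $J_\eps(t_uu)=\tfrac{p-2}{2p}\|t_uu\|_\eps^2=\tfrac{p-2}{2p}t_u^2A$, and then simplify exponents using $t_u^2=(A/B)^{2/(p-2)}$ together with $\tfrac{2}{p-2}+1=\tfrac{p}{p-2}$ to recover the stated closed form. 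The conceptual crux is the role of the restriction to $\cU$: had $B\leq 0$, the function $J_{\eps,u}$ would be strictly increasing on all of $(0,\infty)$ with no critical point, and no radial projection onto $\cN_\eps$ could exist; the hypothesis $u\in\cU$ is exactly what rules this out.
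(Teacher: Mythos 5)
Your proof is correct and follows essentially the same route as the paper, which reduces the lemma to the elementary analysis of $J_{\eps,u}(t)=\tfrac{A}{2}t^2-\tfrac{B}{p}t^p$ with $A,B>0$; you simply make explicit the calculus details (the root of $J_{\eps,u}'$, the sign analysis, and the substitution of $t_u$) that the paper leaves to the reader. Your closing remark on why membership in $\cU$ is exactly the condition making $B>0$, and hence a critical point exist, is a correct and worthwhile observation.
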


\begin{proof}
It suffices to observe that $J_{\eps,u}(t)=at^2-bt^p$ with
$$a:=\frac{1}{2}\|u\|_\eps^2>0\qquad\text{and}\qquad b:=\frac{1}{p}\irn Q(x)|u|^p>0,$$
for $t\in[0,\infty)$.
\end{proof}

Given $u\in\cN_\eps$, let $\nabla_{\cN_\eps}J_\eps(u)\in H^1(\rn)$ denote the gradient of the restriction $J_\eps|_{\cN_\eps}:\cN_\eps\to\r$ of $J_\eps$ to $\cN_\eps$ with respect to the inner product~\eqref{eq:eps_inner product}, i.e., $\nabla_{\cN_\eps}J_\eps(u)$ is the orthogonal projection of $\nabla J_\eps$ onto the tangent space to $\cN_\eps$ at $u$ where $\nabla J_\eps(u)\in H^1(\rn)$ is given by
$$\langle \nabla J_\eps(u),v\rangle_\eps=J'_\eps(u)v\qquad\text{for all \ }v\in H^1(\rn).$$
Recall that $(u_k)$ is a Palais-Smale sequence for $J_\eps|_{\cN_\eps}$ if
$$u_k\in\cN_\eps,\qquad J_\eps(u_k)\to c, \qquad \nabla_{\cN_\eps}J_\eps(u_k)\to 0,$$
and $J_\eps$ is said to satisfy the Palais-Smale condition on $\cN_\eps$ if any such sequence contains a convergent subsequence.

\begin{lemma}
Let $u_k\in\cN_\eps$ satisfy $J_\eps(u_k)\to c$ and $\nabla_{\cN_\eps}J_\eps(u_k)\to 0$. Then $(u_k)$ is bounded in $H^1(\rn)$ and $\nabla J_\eps(u_k)\to 0$.
\end{lemma}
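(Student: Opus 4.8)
The plan is to prove the two assertions separately: boundedness follows at once from the Nehari constraint, while the convergence of the full gradient is obtained through a Lagrange-multiplier decomposition, the key point being a uniform lower bound on the norms that keeps the constraint nondegenerate.

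First I would establish boundedness. Since $u_k\in\cN_\eps$, identity~\eqref{eq:energy_on_nehari} gives $J_\eps(u_k)=\frac{p-2}{2p}\|u_k\|_\eps^2$. As $J_\eps(u_k)\to c$, the sequence $(\|u_k\|_\eps^2)$ is bounded, and because $\|\cdot\|_\eps$ is equivalent to the standard norm, $(u_k)$ is bounded in $H^1(\rn)$. The same identity together with $c_\eps=\inf_{\cN_\eps}J_\eps>0$ yields a uniform lower bound $\|u_k\|_\eps\geq\rho>0$; this positivity (which uses $p>2$) is precisely what will make the multiplier estimate work.

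For the gradient, I would introduce the constraint $G_\eps(u):=J'_\eps(u)u=\|u\|_\eps^2-\irn Q(x)|u|^p$, so that $\cN_\eps=\{u\neq 0:G_\eps(u)=0\}$, and let $\nabla G_\eps(u)$ denote its gradient with respect to $\langle\cdot,\cdot\rangle_\eps$. By definition $\nabla_{\cN_\eps}J_\eps(u)$ is the orthogonal projection of $\nabla J_\eps(u)$ onto $T_u\cN_\eps=\{v\in H^1(\rn):\langle\nabla G_\eps(u),v\rangle_\eps=0\}$, so there is a scalar $\lambda_u$ with
$$\nabla J_\eps(u)=\nabla_{\cN_\eps}J_\eps(u)+\lambda_u\nabla G_\eps(u).$$
Pairing this with $u$ and using $\langle\nabla J_\eps(u),u\rangle_\eps=J'_\eps(u)u=0$ on $\cN_\eps$, while a direct computation gives $\langle\nabla G_\eps(u),u\rangle_\eps=2\|u\|_\eps^2-p\irn Q(x)|u|^p=(2-p)\|u\|_\eps^2$, I obtain
$$\lambda_u=\frac{\langle\nabla_{\cN_\eps}J_\eps(u),u\rangle_\eps}{(p-2)\|u\|_\eps^2},\qquad\text{hence}\qquad |\lambda_u|\leq\frac{\|\nabla_{\cN_\eps}J_\eps(u)\|_\eps}{(p-2)\|u\|_\eps}.$$

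Finally I would control $\nabla G_\eps(u_k)$: from $G_\eps'(u)v=2\langle u,v\rangle_\eps-p\irn Q(x)|u|^{p-2}uv$ and the Hölder–Sobolev inequalities one gets $\|\nabla G_\eps(u)\|_\eps\leq 2\|u\|_\eps+Cp\|u\|_\eps^{p-1}$, so the boundedness already proved makes $\|\nabla G_\eps(u_k)\|_\eps$ bounded. Combining the lower bound $\|u_k\|_\eps\geq\rho$ with the hypothesis $\nabla_{\cN_\eps}J_\eps(u_k)\to 0$ forces $\lambda_{u_k}\to 0$, and therefore $\nabla J_\eps(u_k)=\nabla_{\cN_\eps}J_\eps(u_k)+\lambda_{u_k}\nabla G_\eps(u_k)\to 0$. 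I expect the main obstacle to be exactly this multiplier estimate: one must ensure the constraint does not degenerate along the sequence, i.e.\ that $|\langle\nabla G_\eps(u_k),u_k\rangle_\eps|=(p-2)\|u_k\|_\eps^2$ stays bounded away from zero. This is guaranteed by the uniform lower bound coming from $c_\eps>0$ and $p>2$; without it the projection onto $T_{u_k}\cN_\eps$ could collapse and the passage from $\nabla_{\cN_\eps}J_\eps\to 0$ to $\nabla J_\eps\to 0$ would fail.
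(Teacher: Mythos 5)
Your proof is correct and takes essentially the same route as the paper: the same Lagrange-multiplier decomposition $\nabla J_\eps(u_k)=\nabla_{\cN_\eps}J_\eps(u_k)+\lambda_k\nabla F_\eps(u_k)$, paired with $u_k$ and combined with a H\"older--Sobolev bound on the constraint gradient. You even make explicit a nondegeneracy point the paper leaves implicit, namely that $\langle\nabla F_\eps(u_k),u_k\rangle_\eps=(2-p)\|u_k\|_\eps^2$ stays bounded away from zero because $\|u_k\|_\eps^2=\frac{2p}{p-2}J_\eps(u_k)\geq\frac{2p}{p-2}c_\eps>0$.
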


\begin{proof}
It follows immediately from~\eqref{eq:energy_on_nehari} that $(u_k)$ is bounded.

To prove the second statement we write $\cN_\eps=F_\eps^{-1}(0)$ where $F_\eps:H^1(\rn)\cap \mathcal U \to\r$ is given by
$$F_\eps(u):=\|u\|^2_\eps - \irn Q(x)|u|^p.$$
Using Hölder’s and Sobolev’s inequalities, for every $v\in H^1(\rn)$ we obtain
\begin{equation*}
|\langle \nabla F_\eps(u_k),v\rangle_{\eps}|=\Big|\langle u_k,v\rangle_\eps - p\irn Q(x)|u_k|^{p-2}u_kv\Big| \leq (\|u_k\|_\eps + \|u_k\|_\eps^{p-1})\|v\|_\eps
\end{equation*}
As $(u_k)$ is bounded, this implies that $(\nabla F_\eps(u_k))$ is also bounded in $H^1(\rn)$. Next, we express $\nabla J_\eps(u_k)$ as the sum of its tangent and normal components, i.e.,
\begin{equation} \label{eq:sum}
\nabla J_\eps(u_k)=\nabla_{\cN_\eps}J_\eps(u_k) + t_k\nabla F_\eps(u_k),\qquad t_k\in\r.
\end{equation} 
As $u_k\in\cN_\eps$, taking the inner product with $u_k$ we get
$$0=\langle\nabla J_\eps(u_k),u_k\rangle_{\eps}=\langle\nabla_{\cN_\eps}J_\eps(u_k),u_k\rangle_{\eps} + t_k\langle\nabla F_\eps(u_k),u_k\rangle_{\eps}.$$
Since $(u_k)$ and $(\nabla F_\eps(u_k))$ are bounded in $H^1(\rn)$ and $\nabla_{\cN_\eps}J_\eps(u_k)\to 0$, it follows that $t_k\to 0$. Then, we derive from~\eqref{eq:sum} that $\nabla J_\eps(u_k)\to 0$, as claimed.
\end{proof}

\begin{lemma} \label{lem:ps}
$J_\eps$ satisfies the Palais-Smale condition on $\cN_\eps$. 
\end{lemma}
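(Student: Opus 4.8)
The plan is to exploit the defocusing sign of $Q$ outside the bounded set $\o$, which is exactly what restores compactness despite the noncompactness of the embedding $H^1(\rn)\hookrightarrow L^p(\rn)$. Let $(u_k)$ be a Palais--Smale sequence for $J_\eps|_{\cN_\eps}$. By the preceding lemma it is bounded in $H^1(\rn)$ and satisfies $\nabla J_\eps(u_k)\to 0$, equivalently $J'_\eps(u_k)\to 0$ in the dual of $H^1(\rn)$. After passing to a subsequence I may assume $u_k\rh u$ weakly in $H^1(\rn)$. Since $\o$ is bounded, the Rellich--Kondrachov theorem yields the compactness of the restriction $H^1(\rn)\to L^p(\o)$, so that $u_k\to u$ strongly in $L^p(\o)$ and $u_k\to u$ a.e.\ in $\rn$.

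The core of the argument is to test $J'_\eps(u_k)-J'_\eps(u)$ against $u_k-u$. First I would check that $(J'_\eps(u_k)-J'_\eps(u))(u_k-u)\to 0$: the term $J'_\eps(u_k)(u_k-u)$ tends to $0$ because $\|\nabla J_\eps(u_k)\|_\eps\to 0$ and $(u_k-u)$ is bounded, while $J'_\eps(u)(u_k-u)\to 0$ follows from $u_k\rh u$ (the linear part by weak convergence in $H^1(\rn)$, and the term $\irn Q(x)|u|^{p-2}u(u_k-u)$ because $|u|^{p-2}u\in L^{p'}(\rn)$ is a fixed functional tested against the weakly null sequence $u_k-u$ in $L^p(\rn)$, with $p'=\frac{p}{p-1}$).

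Then I would expand
$$(J'_\eps(u_k)-J'_\eps(u))(u_k-u)=\|u_k-u\|_\eps^2-\irn Q(x)\big(|u_k|^{p-2}u_k-|u|^{p-2}u\big)(u_k-u)$$
and split the nonlinear integral over $\o$ and $\rn\smallsetminus\o$. On $\o$, where $Q=1$, Hölder's inequality together with the strong convergence $u_k\to u$ in $L^p(\o)$ and the boundedness of $(|u_k|^{p-2}u_k)$ in $L^{p'}(\o)$ give $\io(|u_k|^{p-2}u_k-|u|^{p-2}u)(u_k-u)\to 0$. On $\rn\smallsetminus\o$, where $Q=-1$, the integrand carries a favorable sign: by the elementary monotonicity inequality $(|a|^{p-2}a-|b|^{p-2}b)(a-b)\ge 0$ for all $a,b\in\r$, the contribution $\int_{\rn\smallsetminus\o}(|u_k|^{p-2}u_k-|u|^{p-2}u)(u_k-u)$ is nonnegative. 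Hence
$$\|u_k-u\|_\eps^2+\int_{\rn\smallsetminus\o}\big(|u_k|^{p-2}u_k-|u|^{p-2}u\big)(u_k-u)=o(1),$$
and since both summands are nonnegative, each tends to $0$; in particular $\|u_k-u\|_\eps\to 0$. As $\|\cdot\|_\eps$ is equivalent to the standard norm, $u_k\to u$ strongly in $H^1(\rn)$. Finally, strong convergence gives $F_\eps(u)=\lim_k F_\eps(u_k)=0$, while $\|u\|_\eps=\lim_k\|u_k\|_\eps>0$ (the norm on $\cN_\eps$ is bounded away from $0$ since $c_\eps>0$), so $u\in\cN_\eps$ and the subsequence converges in $\cN_\eps$.

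I expect the only genuinely delicate point to be the passage from weak to strong convergence, i.e.\ ruling out loss of mass at infinity. The whole difficulty is resolved by the sign of $Q$ on the unbounded region, which turns the corresponding nonlinear term into a nonnegative quantity rather than demanding a concentration--compactness analysis; this is precisely where the hypothesis $Q=-1$ on $\rn\smallsetminus\o$ is used. The remaining steps rely only on Rellich compactness on the bounded set $\o$, Hölder's inequality, and continuity of the Nemytskii operator, and are routine.
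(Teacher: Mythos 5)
Your proof is correct, and it takes a genuinely different route from the paper's. The paper first identifies the weak limit $u$ as a solution of \eqref{eq:Q_problem} by testing $J'_\eps(u_k)$ against $\vp\in\cC^\infty_c(\rn)$, then shows $u\neq 0$ by combining the level bound $J_\eps(u_k)\geq c_\eps$ with strong $L^p(\o)$ convergence and Fatou's lemma on $\rn\smallsetminus\o$ (the defocusing sign enters through the upper-semicontinuity inequality $\limsup_k\irn Q(x)|u_k|^p\leq\irn Q(x)|u|^p$), and finally closes the chain $\|u\|_\eps^2\leq\liminf_k\|u_k\|_\eps^2=\lim_k\irn Q(x)|u_k|^p\leq\irn Q(x)|u|^p=\|u\|_\eps^2$, deducing strong convergence from weak convergence plus norm convergence. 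You instead test $J'_\eps(u_k)-J'_\eps(u)$ against $u_k-u$ and exploit the monotonicity of $t\mapsto|t|^{p-2}t$, so that the exterior term, where $Q=-1$, has a favorable sign and $\|u_k-u\|_\eps^2\to 0$ directly; membership $u\in\cN_\eps$ then follows from continuity of $F_\eps$ under strong convergence together with the lower bound $\|u_k\|_\eps^2=\frac{2p}{p-2}J_\eps(u_k)\geq\frac{2p}{p-2}c_\eps>0$. Both arguments rest on Rellich compactness over the bounded set $\o$ and locate the compactness mechanism precisely in the sign of $Q$ outside $\o$; the paper's version has the byproduct that the weak limit is exhibited as a critical point, a scheme it reuses for the limit profiles in Theorems~\ref{thm:positive profile} and~\ref{thm:nodal profile}, while yours avoids Fatou and any identification of the limit equation and is the more standard ``monotone nonlinearity'' compactness argument, which moreover yields the extra information $\int_{\rn\smallsetminus\o}\bigl(|u_k|^{p-2}u_k-|u|^{p-2}u\bigr)(u_k-u)\to 0$. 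One cosmetic slip: compactness of $H^1(\rn)\to L^p(\o)$ gives a.e.\ convergence (along a subsequence) only on $\o$, not on all of $\rn$ --- for the latter one uses local compactness on balls and a diagonal argument --- but since your argument never actually invokes a.e.\ convergence, nothing is affected.
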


\begin{proof}
Let $u_k\in\cN_\eps$ satisfy $J_\eps(u_k)\to c$ and $\nabla_{\cN_\eps}J_\eps(u_k)\to 0$. Then $(u_k)$ is a bounded sequence in $H^1(\rn)$ and, passing to a subsequence, $u_k\rh u$ weakly in $H^1(\rn)$, $u_k\to u$ in $L_{loc}^p(\rn)$ and $u_k\to u$ a.e. in $\rn$. Since $\nabla J_\eps(u_k)\to 0$, for every $\vp\in\cC^\infty_c(\rn)$ we get that
\begin{align*}
0=\lim_{k\to\infty}\left(\langle u_k,\vp\rangle_\eps - \irn Q(x)|u_k|^{p-2}u_k\vp\right)=\langle u,\vp\rangle_\eps - \irn Q(x)|u|^{p-2}u\vp.
\end{align*}
This shows that $u$ solves~\eqref{eq:Q_problem}. Using that $\o$ is bounded and Fatou's lemma we see that
$$\frac{2p}{p-2}c_\eps\leq\lim_{k\to\infty}\irn Q(x)|u_k|^p=\lim_{k\to\infty}\io |u_k|^p-\lim_{k\to\infty}\int_{\rn\smallsetminus\o}|u_k|^p\leq \io |u|^p-\int_{\rn\smallsetminus\o}|u|^p=\irn Q(x)|u|^p.$$
This shows that $u\neq 0$. Hence, $u\in\cN_\eps$ and, from
\begin{align*}
\|u\|_\eps^2\leq \lim_{k\to\infty}\|u_k\|_\eps^2=\lim_{k\to\infty}\irn Q(x)|u_k|^p\leq\irn Q(x)|u|^p=\|u\|_\eps^2,
\end{align*}
we get that $u_k\to u$ strongly in $H^1(\rn)$, as claimed.
\end{proof}

\begin{lemma} \label{lem:regularity}
Every solution of~\eqref{eq:Q_problem} belongs to $W^{2,s}_{loc}(\rn)\cap \cC^{1,\alpha}_{loc}(\rn)\cap \cC_{loc}^\infty(\Omega)\cap \cC_{loc}^\infty(\rn\smallsetminus \overline{\Omega})$ for all $s\in[1,\infty)$ and $\alpha\in(0,1)$.
\end{lemma}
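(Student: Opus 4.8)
The plan is to split the argument into a \emph{global} part, which can only reach limited regularity because $Q$ jumps across $\partial\o$, and an \emph{interior} part on the open sets $\o$ and $\rn\smallsetminus\overline{\o}$, where $Q$ is constant and an unobstructed bootstrap is available. The two pieces together give exactly the four spaces in the statement.

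For the global regularity I would rewrite the equation as $-\Delta u=h$ with $h:=Q(x)|u|^{p-2}u-\eps^2u$ and run an $L^s$-bootstrap. Starting from $u\in H^1(\rn)\subset L^{2^*}_{loc}(\rn)$ and using the subcriticality $p<2^*$, the term $|u|^{p-2}u$ lies in $L^{2^*/(p-1)}_{loc}(\rn)$, whose exponent strictly exceeds what is needed to feed the interior $L^s$ (Calderón--Zygmund) estimate. Combining that estimate with the Sobolev embedding $W^{2,s}\hookrightarrow L^{Ns/(N-2s)}$ (for $2s<N$) raises the integrability of $u$ at each step, and because the gain is strict the iteration reaches $u\in L^s_{loc}(\rn)$ for every $s<\infty$. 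Then $h\in L^s_{loc}(\rn)$ for every $s$, so $u\in W^{2,s}_{loc}(\rn)$ for all $s\in[1,\infty)$, and Morrey's embedding gives $u\in\cC^{1,\alpha}_{loc}(\rn)$ for all $\alpha\in(0,1)$. For $N=1,2$ the same scheme applies, starting from $H^1(\rn)\hookrightarrow L^q_{loc}(\rn)$ for all $q<\infty$, and is in fact easier.

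For the interior smoothness I would localize inside $\o$, where $Q\equiv1$ and the equation reads $-\Delta u+\eps^2u=|u|^{p-2}u$ with constant coefficients, and symmetrically inside $\rn\smallsetminus\overline{\o}$, where $Q\equiv-1$. Since $u\in\cC^{1,\alpha}_{loc}(\rn)$ by the previous step, the right-hand side is Hölder continuous, so interior Schauder estimates upgrade $u$ to $\cC^{2,\beta}_{loc}$ on these open sets; iterating the Schauder estimate (tracking the regularity of the composition $s\mapsto|s|^{p-2}s$, which is smooth away from the zero set of $u$ and $\cC^1$ throughout since $p>2$) raises the order indefinitely and yields $u\in\cC^\infty_{loc}(\o)\cap\cC^\infty_{loc}(\rn\smallsetminus\overline{\o})$.

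The main obstacle, and the reason the statement has its particular form, is the interface $\partial\o$: because $Q$ is discontinuous there, $h$ inherits a jump across $\partial\o$ and one cannot bootstrap past $W^{2,s}_{loc}\cap\cC^{1,\alpha}_{loc}$ globally. This is precisely why the conclusion separates a global $\cC^{1,\alpha}$ regularity from an interior $\cC^\infty$ regularity, and why the two interior components are stated on the \emph{open} sets $\o$ and $\rn\smallsetminus\overline{\o}$ rather than up to their common boundary. The remaining steps (the $L^s$- and Schauder-bootstraps) are standard for subcritical semilinear equations, so I expect the only genuinely structural point to be organizing the iteration so that it reflects this discontinuity.
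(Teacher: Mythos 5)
Your proposal is correct, but it reaches the key intermediate step --- $u\in L^s_{loc}(\rn)$ for every $s<\infty$ --- by a genuinely different route than the paper. The paper absorbs the entire right-hand side into a linear term: it sets $a(x):=\eps^2+|u(x)|^{p-2}$, notes that $|-\eps^2u+Q(x)|u|^{p-2}u|\leq a(x)|u|$ with $a\in L^{N/2}_{loc}(\rn)$, and invokes a Brezis--Kato--Moser argument (citing \cite[Lemma B.3]{s}) to conclude $u\in L^s_{loc}(\rn)$ for all $s$ in one stroke; the passage to $W^{2,s}_{loc}\cap\cC^{1,\alpha}_{loc}$ globally and to $\cC^\infty_{loc}$ on $\o$ and $\rn\smallsetminus\overline{\o}$ is then cited as standard, exactly as in your second and third paragraphs. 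You instead run the direct subcritical Calder\'on--Zygmund/Sobolev bootstrap: from $u\in L^{q_n}_{loc}$ one gets $h\in L^{q_n/(p-1)}_{loc}$, hence $u\in W^{2,q_n/(p-1)}_{loc}\subset L^{q_{n+1}}_{loc}$ with $\frac{1}{q_{n+1}}=\frac{p-1}{q_n}-\frac{2}{N}$. Your assertion that the gain is strict and the iteration terminates deserves the two-line verification: the map $t\mapsto(p-1)t-\frac{2}{N}$ on $t=1/q$ has repelling fixed point $t^*=\frac{2}{N(p-2)}$, and $t_0=1/2^*<t^*$ precisely because $p<2^*$, so $t_n-t^*=(p-1)^n(t_0-t^*)\to-\infty$ and $t_n$ leaves $(0,\infty)$ in finitely many steps. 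With that inserted, your argument is complete. The trade-off between the two routes: the Brezis--Kato approach is insensitive to the size of the exponent (it would survive even $p=2^*$) and avoids tracking an iteration, while yours is more elementary and self-contained, using only interior Calder\'on--Zygmund estimates and Sobolev/Morrey embeddings, and it makes transparent exactly where subcriticality is used. Both are fully rigorous here since $p<2^*$, and your observation that the jump of $Q$ across $\partial\o$ is what caps the global regularity at $W^{2,s}_{loc}\cap\cC^{1,\alpha}_{loc}$ correctly identifies the structural point.

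One caveat, which applies to your write-up and to the paper's own proof in equal measure: the Schauder iteration raises the differentiability order indefinitely only where the nonlinearity $t\mapsto|t|^{p-2}t$ is smooth, i.e.\ away from the zero set of $u$ (at $t=0$ it is only finitely differentiable unless $p$ is an even integer; e.g.\ for $2<p<3$ its second derivative is unbounded near $0$). For positive solutions this is vacuous, but for nodal solutions with interior zeros in $\o$ the bootstrap stalls at a finite order along the nodal set, so the literal $\cC^\infty_{loc}(\o)$ claim should be understood with this standard proviso. You flagged the composition-regularity issue in passing but then claimed the order rises ``indefinitely''; since the paper glosses the same point by citing \cite[Appendix B]{s}, your treatment matches the intended level of rigor, but it is worth stating the proviso explicitly rather than eliding it.
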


\begin{proof}
Let $u\in H^1(\rn)$ be a solution of~\eqref{eq:Q_problem}. Setting $a(x)=\eps^2+|u(x)|^{p-2}$ we see that $|-\eps^2u+Q(x)|u|^{p-2}u|\leq a(x)|u|$ and $a\in L^\frac{N}{2}_{loc}(\rn)$. Then, a Brezis-Kato-Moser argument shows that $u\in L^s_{loc}(\rn)$ for every $s\in[1,\infty)$ (see \cite[Lemma B.3]{s}), and standard elliptic regularity arguments yield that $u\in \cC^{1,\alpha}_{loc}(\rn)$ for every $\alpha\in(0,1)$ and $u\in \cC_{loc}^\infty(\Omega)\cap \cC_{loc}^\infty(\rn\smallsetminus \overline{\Omega})$ (see \cite[Appendix B]{s}).
\end{proof}

\begin{theorem} \label{thm:multiplicity}
Problem~\eqref{eq:Q_problem} has a positive least energy solution and a sequence of solutions that is unbounded in $H^1(\rn)$.
\end{theorem}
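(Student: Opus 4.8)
The plan is to prove the two assertions separately, both resting on the Palais--Smale condition of Lemma~\ref{lem:ps}, the evenness $J_\eps(-u)=J_\eps(u)$, and the regularity of Lemma~\ref{lem:regularity}. For the positive least energy solution I would minimize $J_\eps$ over $\cN_\eps$: taking a minimizing sequence for $c_\eps=\inf_{\cN_\eps}J_\eps$ and applying Ekeland's variational principle yields a Palais--Smale sequence for $J_\eps|_{\cN_\eps}$, which by Lemma~\ref{lem:ps} converges (up to a subsequence) to some $u\in\cN_\eps$ with $J_\eps(u)=c_\eps$. Since $\cN_\eps$ is a natural constraint, $u$ solves~\eqref{eq:Q_problem}. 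Because $\||u|\|_\eps=\|u\|_\eps$ and $\irn Q(x)||u||^p=\irn Q(x)|u|^p$, the function $|u|$ also lies in $\cN_\eps$ with $J_\eps(|u|)=c_\eps$, so $|u|$ is a nonnegative least energy solution. Writing the equation as $-\Delta|u|+(\eps^2+|u|^{p-2})|u|\geq 0$ with a locally bounded zeroth-order coefficient (by $|u|\in\cC^{1,\alpha}_{loc}(\rn)$ from Lemma~\ref{lem:regularity}), the strong maximum principle then forces $|u|>0$ on all of $\rn$.

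For the unbounded sequence of solutions I would exploit the $\z_2$-symmetry of $J_\eps$. First I would upgrade Lemma~\ref{lem:ps} to the \emph{global} Palais--Smale condition on $H^1(\rn)$: an unconstrained Palais--Smale sequence is bounded by the identity $(\tfrac12-\tfrac1p)\|u_k\|_\eps^2=J_\eps(u_k)-\tfrac1p J'_\eps(u_k)u_k$, and the same Fatou argument as in Lemma~\ref{lem:ps}---crucially using that $\o$ is bounded so $u_k\to u$ in $L^p(\o)$---gives strong convergence. The mountain-pass geometry near $0$ follows from $Q\leq 1$ and Sobolev's inequality, giving $J_\eps(u)\geq\tfrac12\|u\|_\eps^2-C\|u\|_\eps^p\geq\alpha>0$ on a small sphere $\|u\|_\eps=\rho$. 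The difficulty is that $Q$ changes sign, so $J_\eps$ is unbounded both above and below: along directions supported in $\rn\smallsetminus\overline\o$ one has $J_\eps(tu)\to+\infty$, which destroys the hypothesis ``$J_\eps\to-\infty$ on every finite-dimensional subspace'' needed by the symmetric mountain pass theorem, and likewise obstructs the plain Fountain theorem.

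The key structural observation is that the indefiniteness is confined to the bounded set $\o$, which I would use twice. For $u\in H^1_0(\o)$ one has $J_\eps(u)=\tfrac12\|u\|_\eps^2-\tfrac1p\io|u|^p$, so $J_\eps\to-\infty$ on every finite-dimensional $Y_n\subset H^1_0(\o)$; these supply the descending directions. Conversely, fixing an orthonormal basis $(e_j)$ of $(H^1(\rn),\langle\cdot,\cdot\rangle_\eps)$ and setting $Z_m:=\overline{\mathrm{span}}\{e_j:j>m\}$, the compactness of $H^1(\rn)\hookrightarrow L^p(\o)$ (again because $\o$ is bounded) gives $\beta_m:=\sup_{u\in Z_m,\,\|u\|_\eps=1}\|u\|_{L^p(\o)}\to0$; hence on the sphere $\|u\|_\eps=\rho_m$ in $Z_m$ one has $J_\eps(u)\geq\tfrac12\rho_m^2-\tfrac1p\beta_m^p\rho_m^p$, and the choice $\rho_m=\beta_m^{-p/(p-2)}$ makes this lower bound $b_m\to\infty$.

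With these two families I would run a $\z_2$-symmetric linking (Fountain-type) minimax. Taking a ball $B_n\subset Y_n$ of radius $R_n$ on whose boundary $J_\eps\leq 0$, I set
$$c_n:=\inf_{\gamma\in\Gamma_n}\ \max_{u\in B_n}J_\eps(\gamma(u)),\qquad \Gamma_n:=\{\gamma\in\cC(B_n,H^1(\rn)):\gamma\text{ odd},\ \gamma|_{\partial B_n}=\mathrm{id}\}.$$
A Borsuk--Ulam intersection argument, using $\dim Y_n=n>n-1=\mathrm{codim}\,Z_{n-1}$, shows every $\gamma(B_n)$ meets the sphere $\{u\in Z_{n-1}:\|u\|_\eps=\rho_{n-1}\}$, so that $c_n\geq b_{n-1}\to\infty$; the global Palais--Smale condition and the standard odd deformation lemma make each $c_n$ a critical value. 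Since any solution satisfies $J_\eps(u)=\tfrac{p-2}{2p}\|u\|_\eps^2$, the divergence $c_n\to\infty$ forces the corresponding solutions to be unbounded in $H^1(\rn)$. The main obstacle is exactly this isolation of the indefiniteness: producing the descending subspaces inside $H^1_0(\o)$ and the diverging bounds $b_m$ from the compact embedding into $L^p(\o)$ (not into $L^p(\rn)$, where compactness fails), and verifying the linking intersection for these two \emph{non-complementary} families.
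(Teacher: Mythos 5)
Your first paragraph (the positive least energy solution) is essentially the paper's own proof: Ekeland's variational principle plus Lemma~\ref{lem:ps} give a minimizer on $\cN_\eps$, one passes to $|u|$, and the strong maximum principle for strong solutions yields positivity; the paper writes the equation as $-\Delta u+(\eps^2-Q^-(x)|u|^{p-2})u=Q^+(x)|u|^{p-2}u\geq 0$, which is the same device as your $-\Delta|u|+(\eps^2+|u|^{p-2})|u|\geq 0$, and the needed $W^{2,N}_{loc}$ regularity is exactly Lemma~\ref{lem:regularity}.

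For the unbounded sequence your route is correct but genuinely different from the paper's. The paper stays on the Nehari manifold: since every $u\in H^1_0(\o)$, $u\neq 0$, satisfies $\irn Q(x)|u|^p=\io|u|^p>0$, the radial projection of Lemma~\ref{lem:t_u} gives a continuous odd map $\Sigma_\o=\{u\in H^1_0(\o):\|u\|_\eps=1\}\to\cN_\eps$, whence $\gen(\cN_\eps)=\infty$; Struwe's Theorem II.5.7 then produces infinitely many pairs of critical points of $J_\eps|_{\cN_\eps}$, and the argument of Rabinowitz's Proposition 9.33 upgrades this to an unbounded sequence of critical values. You instead run an unconstrained Fountain-type minimax. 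This needs two extra facts, both of which are true and which you state correctly: the \emph{global} Palais--Smale condition on $H^1(\rn)$ (your Fatou argument goes through because the weak limit $u$ of a PS sequence still satisfies $\|u\|_\eps^2=\irn Q(x)|u|^p$, even if $u=0$, so strong convergence follows without needing $u\in\cN_\eps$), and the splitting of roles between descending subspaces $Y_n\subset H^1_0(\o)$ and tail spaces $Z_m$ of an arbitrary orthonormal basis of $H^1(\rn)$, with $\beta_m\to 0$ via the compact embedding into $L^p$ of a ball containing $\overline\o$. The non-complementarity you flag at the end is in fact harmless: the Borsuk--Ulam intersection step only uses the continuous orthogonal projection $P$ onto $\mathrm{span}\{e_1,\dots,e_{n-1}\}$ and $\dim Y_n=n>n-1$, applied to $P\circ\gamma$ on the boundary of the symmetric open set $\{u\in B_n:\|\gamma(u)\|_\eps<\rho_{n-1}\}$ — one must also choose $R_n>\rho_{n-1}$ so that this boundary lies in $\{\|\gamma(u)\|_\eps=\rho_{n-1}\}$, and take the deformation \emph{odd} (equivariant deformation lemma for the even functional), which fixes $\{J_\eps\leq 0\}\supset\gamma(\partial B_n)$ since $c_n\geq b_{n-1}>0$. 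Both proofs exploit the same two structural facts — the indefiniteness of $Q$ is confined to the bounded set $\o$, and the defocusing exterior restores compactness — but yours yields the divergent levels $c_n\geq b_{n-1}\to\infty$ directly and quantitatively, at the cost of redoing Fountain-theorem machinery in a slightly nonstandard setting, whereas the paper obtains unboundedness of the critical values indirectly (genus plus the Rabinowitz argument) with much shorter citations.
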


\begin{proof}
The existence of a least energy solution follows from Ekeland's variational principle and Lemma~\ref{lem:ps}. Replacing it by its absolute value, we obtain a least energy solution such that $u\geq 0$ in $\rn$. As shown in Lemma~\ref{lem:regularity}, $u\in W^{2,N}_{loc}(\rn)$. Thus, the strong maximum principle for strong solutions \cite[Theorem 9.6]{gt} applied to the equation
$$
-\Delta u + (\eps^2 - Q^-(x)|u|^{p-2})u = Q^+(x)|u|^{p-2}u,
$$
with $Q^+:=\max\{Q,0\}$ and  $Q^-:=\min\{Q,0\}$, yields that $u>0$ in $\rn$.

To prove the second statement, note that $\cN_\eps$ is symmetric with respect to the origin, $0\notin\cN_\eps$, and $J_\eps$ is even, bounded from below on $\cN_\eps$ and satisfies the Palais-Smale condition on $\cN_\eps$. Furthermore, the map
$$\Sigma_\o:=\{u\in H_0^1(\o):\|u\|_\eps=1\}\to\cN_\eps,\qquad u\mapsto \left(\frac{\|u\|_\eps^2}{\io |u|^p}\right)^\frac{1}{p-2}u,$$
is continuous and odd. Therefore, $\infty=\gen(\Sigma_\o)\leq\gen(\cN_\eps)$, where ``$\gen$" stands for the Kransnoselskii genus. It follows from \cite[Theorem II.5.7]{s} that $J_\eps$ has infinitely many pairs of critical points on $\cN_\eps$ and following the argument in \cite[Proposition 9.33]{r} one shows that $J_\eps$ has an unbounded sequence of critical values on $\cN_\eps$, as claimed.
\end{proof}
\medskip

\begin{proof}[Proof of Theorem~\ref{thm:main_multiplicity}]
Since the problems~\eqref{eq:Q_eps_problem} and~\eqref{eq:Q_problem} are equivalent, this is a consequence of Theorem~\ref{thm:multiplicity}.
\end{proof}

\section{The limit profile of positive least energy solutions}\label{poslim:sec}

Let 
\begin{align*}
E:=D^{1,2}(\rn)\cap L^p(\rn)    
\end{align*}
be the Banach space whose norm is given by
$$\|w\|_E^2:=\|u\|^2+|u|_p^2,\qquad\text{where \ }\|u\|^2:=\irn|\nabla u|^2\text{ \ and \ }|u|_p^p:=\irn|u|^p.$$
When $\o$ is the unit ball, the behavior of positive least energy solutions to~\eqref{eq:Q_problem} as $\eps\to 0$ is described in \cite[Theorem 1.1]{fw}. It is shown that, after passing to a subsequence they converge strongly in $E$ to a positive least energy solution to the problem~\eqref{eq:limit_problem}. The general case is similar. We give the details.

The solutions to~\eqref{eq:limit_problem} are the critical points of the functional $J_0:E\to\r$ given by
\begin{align}\label{J0:def}
J_0(u):=\frac{1}{2}\irn|\nabla u|^2 - \frac{1}{p}\irn Q(x)|u|^p,
\end{align}
which is of class $\cC^2$. Its derivative is
$$J'_0(u)v=\irn\nabla u\cdot\nabla v - \irn Q(x)|u|^{p-2}uv.$$
The nontrivial critical points of $J_0$ belong to the Nehari manifold
\begin{align}\label{N0:def}
\cN_0:=\{u\in E:u\neq 0, \ J'_0(u)u=0\},    
\end{align}
which is a Banach submanifold of $E$ of class $\cC^2$ and a natural constraint for $J_0$. Note that
$$J_0(u)=\frac{p-2}{2p}\irn|\nabla u|^2\qquad\text{if \ }u\in\cN_0.$$
Set 
$$c_0:=\inf_{u\in\cN_0}J_0(u).$$

\begin{lemma}
$c_0>0$.
\end{lemma}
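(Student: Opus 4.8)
The plan is to show that the infimum of $J_0$ over the Nehari manifold $\cN_0$ is strictly positive, by exhibiting a uniform lower bound on $\|u\|$ (the $D^{1,2}$-seminorm) for all $u\in\cN_0$. The key identity is that on $\cN_0$ we have $J_0(u)=\frac{p-2}{2p}\irn|\nabla u|^2=\frac{p-2}{2p}\|u\|^2$, so it suffices to prove that $\inf_{u\in\cN_0}\|u\|>0$. Since $p>2$, this reduces the problem to ruling out the possibility that a sequence in $\cN_0$ has gradient norm tending to zero.

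**First I would** use the Nehari constraint itself. For $u\in\cN_0$, the defining relation $J_0'(u)u=0$ gives $\|u\|^2=\irn Q(x)|u|^p\leq\io|u|^p\leq|u|_p^p$, where I discard the (favorable) negative contribution from $\rn\smallsetminus\o$ and bound $Q\le 1$. I want to estimate the right-hand side by the gradient norm. The natural tool is the Sobolev inequality $|u|_{2^*}\le S^{-1/2}\|u\|$ in $D^{1,2}(\rn)$ together with the fact that $\o$ is bounded, so that the $L^p(\o)$ norm is controlled by the $L^{2^*}(\o)$ norm via Hölder on the finite-measure set $\o$. Concretely, since $p<2^*$, Hölder gives $\int_\o|u|^p\le|\o|^{1-p/2^*}\left(\int_\o|u|^{2^*}\right)^{p/2^*}\le|\o|^{1-p/2^*}S^{-p/2}\|u\|^p$.

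**Combining these**, I obtain $\|u\|^2\le C\|u\|^p$ with $C:=|\o|^{1-p/2^*}S^{-p/2}$ independent of $u$, hence $\|u\|^{p-2}\ge C^{-1}$, i.e. $\|u\|\ge C^{-1/(p-2)}>0$ for every $u\in\cN_0$. Feeding this into the energy identity yields
\begin{equation*}
c_0=\inf_{u\in\cN_0}J_0(u)=\frac{p-2}{2p}\inf_{u\in\cN_0}\|u\|^2\ge\frac{p-2}{2p}\,C^{-2/(p-2)}>0.
\end{equation*}

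**The main subtlety** I expect is the very step that makes this lemma work and that distinguishes it from the constant-coefficient case discussed after Theorem~\ref{thm:main_nodal}: one must only estimate the positive part $\int_\o|u|^p$ of $\irn Q|u|^p$, and the crucial input is that $\o$ is bounded, so that the subcritical Hölder interpolation on $\o$ is available even though $u$ lives only in $D^{1,2}\cap L^p$ with no control on the full $L^{2^*}(\rn)$ norm being needed beyond Sobolev. There is no genuine difficulty here — the argument is a routine Nehari-manifold lower bound — but care is warranted to use only the $D^{1,2}$ part of the $E$-norm and to invoke boundedness of $\o$ rather than any embedding of $E$ into $L^p(\rn)$, which would be circular.
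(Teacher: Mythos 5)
Your proposal is correct and follows essentially the same route as the paper's proof: from the Nehari constraint you get $\|u\|^2=\irn Q(x)|u|^p\leq\io|u|^p$, then Hölder's inequality on the bounded set $\o$ combined with the Sobolev inequality gives $\|u\|^2\leq C\|u\|^p$, hence the uniform bound $\|u\|^{p-2}\geq C^{-1}$ and $c_0>0$ via $J_0(u)=\frac{p-2}{2p}\|u\|^2$ on $\cN_0$. Your additional remarks (discarding the favorable negative part of $Q$, making the constant explicit, and noting why boundedness of $\o$ is the essential input) are accurate elaborations of exactly the argument the paper gives.
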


\begin{proof}
Let $u\in\cN_0$. Since $\o$ is bounded, using Hölder's and Sobolev's inequalities we obtain
$$\|u\|^2=\irn Q(x)|u|^p\leq \io|u|^p\leq C\left(\io|u|^{2^*}\right)^{p/2^*}\leq C\left(\irn|u|^{2^*}\right)^{p/2^*}\leq C\|u\|^p. $$
Therefore,
$$C^{-1}\leq\|u\|^{p-2}\qquad\text{for every \ }u\in\cN_0,$$
and the claim follows.

\end{proof}

\begin{lemma} \label{lem:c_eps to c_0}
$\lim_{\eps\to 0}c_\eps = c_0$.
\end{lemma}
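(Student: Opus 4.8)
The goal is to show $\lim_{\eps\to 0} c_\eps = c_0$, where $c_\eps = \inf_{\cN_\eps} J_\eps$ and $c_0 = \inf_{\cN_0} J_0$. The natural strategy is to prove the two inequalities $\limsup_{\eps\to 0} c_\eps \le c_0$ and $\liminf_{\eps\to 0} c_\eps \ge c_0$ separately.

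For the upper bound, the plan is to test $c_\eps$ against a near-minimizer for $c_0$. Fix $\delta>0$ and choose $w\in\cN_0$ with $J_0(w)\le c_0+\delta$. Since $\cC^\infty_c(\rn)\cap\cU$ is dense in the relevant sense, I may assume $w$ has compact support, so that $w\in H^1(\rn)\cap\cU$ and $\irn Q(x)|w|^p>0$. By Lemma~\ref{lem:t_u} there is a unique $t_{\eps}>0$ with $t_\eps w\in\cN_\eps$, and
$$c_\eps \le J_\eps(t_\eps w)=\frac{p-2}{2p}\left(\frac{\|w\|_\eps^2}{\big(\irn Q(x)|w|^p\big)^{2/p}}\right)^{\frac{p}{p-2}}.$$
As $\eps\to 0$, $\|w\|_\eps^2=\irn|\nabla w|^2+\eps^2\irn w^2\to \irn|\nabla w|^2=\|w\|^2$ (the $L^2$ integral is finite because $w$ has compact support), while the denominator is independent of $\eps$. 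Since $w\in\cN_0$ gives $\|w\|^2=\irn Q(x)|w|^p$, the right-hand side converges to $J_0(w)\le c_0+\delta$. Letting $\eps\to 0$ and then $\delta\to 0$ yields $\limsup_{\eps\to 0}c_\eps\le c_0$.

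For the lower bound, the plan is to take for each $\eps$ a least-energy solution $u_\eps\in\cN_\eps$ (which exists by Theorem~\ref{thm:multiplicity}), so $J_\eps(u_\eps)=c_\eps$, and use these as test functions for $c_0$. From \eqref{eq:energy_on_nehari} and the already-established upper bound, $\|u_\eps\|_\eps^2=\tfrac{2p}{p-2}c_\eps$ stays bounded as $\eps\to 0$; in particular $\irn|\nabla u_\eps|^2$ is bounded and the $L^p$ norm is controlled via $\irn Q(x)|u_\eps|^p=\|u_\eps\|_\eps^2$ together with the Sobolev/Hölder estimate used in the $c_0>0$ lemma, so $(u_\eps)$ is bounded in $E$. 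Since $u_\eps\in\cN_\eps$ forces $\irn Q(x)|u_\eps|^p=\|u_\eps\|_\eps^2\ge \irn|\nabla u_\eps|^2>0$, we have $u_\eps\in\cU$, and Lemma~\ref{lem:t_u} (now for the functional $J_0$, whose analogous projection lemma holds verbatim) provides $s_\eps>0$ with $s_\eps u_\eps\in\cN_0$. Then
$$c_0\le J_0(s_\eps u_\eps)=\frac{p-2}{2p}\left(\frac{\|u_\eps\|^2}{\big(\irn Q(x)|u_\eps|^p\big)^{2/p}}\right)^{\frac{p}{p-2}}\le \frac{p-2}{2p}\left(\frac{\|u_\eps\|_\eps^2}{\big(\irn Q(x)|u_\eps|^p\big)^{2/p}}\right)^{\frac{p}{p-2}},$$
where the last inequality uses $\|u_\eps\|^2\le\|u_\eps\|_\eps^2$. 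Substituting $\irn Q(x)|u_\eps|^p=\|u_\eps\|_\eps^2$ collapses the bracket to $\|u_\eps\|_\eps^{2(1-2/p)}$, and the whole right-hand side becomes exactly $\tfrac{p-2}{2p}\|u_\eps\|_\eps^2=J_\eps(u_\eps)=c_\eps$. Hence $c_0\le c_\eps$ for every $\eps>0$, which gives $\liminf_{\eps\to 0}c_\eps\ge c_0$ and indeed the clean monotone bound $c_0\le c_\eps$.

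Combining the two inequalities yields $\lim_{\eps\to 0}c_\eps=c_0$. The only genuinely delicate point is the upper bound: it requires a near-minimizer of $c_0$ with finite $L^2$ norm so that $\|w\|_\eps^2\to\|w\|^2$. Elements of $\cN_0\subset E=D^{1,2}\cap L^p$ need not lie in $L^2(\rn)$, so the main obstacle is justifying that one may approximate $c_0$ by functions in $H^1(\rn)\cap\cU$ (e.g. compactly supported ones); I would handle this by a density/truncation argument, cutting off a minimizing sequence far from the origin and checking that the truncation changes $\|w\|^2$ and $\irn Q(x)|w|^p$ negligibly, using that $Q\equiv -1$ outside the fixed bounded set $\o$ so that truncation only helps the sign-definite quantity $\irn Q(x)|w|^p$. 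The lower bound, by contrast, is essentially an algebraic identity once boundedness and $u_\eps\in\cU$ are in hand.
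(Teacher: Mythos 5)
Your proposal is correct and takes essentially the same route as the paper: the lower bound is the comparison of the Rayleigh-quotient characterizations of $c_\eps$ and $c_0$ via $\|u\|\leq\|u\|_\eps$ and $H^1(\rn)\subset E$ (the paper compares the two infima directly, so your detour through the minimizers $u_\eps$ and their boundedness is valid but superfluous), while the upper bound projects compactly supported near-minimizers of $c_0$ onto $\cN_\eps$ and lets $\eps\to 0$ before the approximation parameter. The density step you rightly flag as the delicate point is exactly what the paper silently assumes by taking $(\vp_k)\subset \cC^\infty_c(\rn)\cap\cN_0$ with $J_0(\vp_k)\to c_0$, and your truncation sketch does close it: the only nontrivial term is $\int|\nabla\chi_R|^2 w^2$ (since $w\in E$ need not lie in $L^2(\rn)$), which is handled by H\"older with exponents $\frac{p}{2}$ and $\frac{p}{p-2}$ on the annulus, the resulting factor $R^{N-\frac{2p}{p-2}}$ vanishing because $p<2^*$, while cutting off only increases $\irn Q(x)|w|^p$ as you note.
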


\begin{proof}
Since $H^1(\rn)\subset E$, using Lemma~\ref{lem:t_u} we obtain
$$c_\eps=\inf_{u\in H^1(\rn)\cap\,\cU}\frac{p-2}{2p}\left(\frac{\|u\|_\eps^2}{\Big(\irn Q(x)|u|^p\Big)^{2/p}}\right)^\frac{p}{p-2}\geq \inf_{u\in E\cap\,\cU}\frac{p-2}{2p}\left(\frac{\|u\|^2}{\Big(\irn Q(x)|u|^p\Big)^{2/p}}\right)^\frac{p}{p-2}=c_0,$$
with $\mathcal U$ as defined in~\eqref{eq:U}. Hence, $\liminf_{\eps\to 0}c_\eps\geq c_0$.

Next, let $(\vp_k)$ be a sequence in $\cC^\infty_c(\rn)\cap \cN_0$ such that $J_0(\vp_k)\to c_0$. Set $t_{\eps,k}:=\left(\frac{\|\vp_k\|_\eps^2}{\irn Q(x)|\vp_k|^p}\right)^{1/(p-2)}$. Then $t_{\eps,k}\vp_k\in\cN_\eps$ and, for each fixed $k$, we have that $\lim_{\eps\to 0}t_{\eps,k}=1$ and
$$\limsup_{\eps\to 0}c_\eps\leq \limsup_{\eps\to 0}\frac{p-2}{2p}\|t_{\eps,k}\vp_k\|_\eps^2=\frac{p-2}{2p}\|\vp_k\|^2=J_0(\vp_k).$$
Letting $k\to\infty$ we get that $\limsup_{\eps\to 0}c_\eps\leq c_0$. This completes the proof.
\end{proof}

\begin{lemma} \label{lem:regularity_limit}
Every solution of~\eqref{eq:limit_problem} belongs to $W^{2,s}_{loc}(\rn)\cap \cC^{1,\alpha}_{loc}(\rn)\cap \cC_{loc}^\infty(\Omega)\cap \cC_{loc}^\infty(\rn\smallsetminus \overline{\Omega})$ for all $s\in[1,\infty)$ and $\alpha\in(0,1)$.
\end{lemma}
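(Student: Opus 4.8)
The plan is to establish regularity for a solution $w$ of the limit problem \eqref{eq:limit_problem} by essentially the same bootstrap strategy used in Lemma~\ref{lem:regularity}, but with care taken for the differences in the functional setting: here $w\in E=D^{1,2}(\rn)\cap L^p(\rn)$ rather than $H^1(\rn)$, and the equation $-\Delta w=Q(x)|w|^{p-2}w$ has no zeroth-order term $\eps^2 w$. The guiding observation is that $Q$ is bounded (indeed $|Q|\equiv 1$), so the right-hand side $Q(x)|w|^{p-2}w$ has exactly the same local integrability properties as a pure power nonlinearity, and the sign change in $Q$ plays no role for a \emph{local} regularity statement. Thus the proof reduces to a standard local argument.

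First I would fix a bounded ball $B\subset\rn$ and view $w$ as a weak solution of $-\Delta w=g$ in $B$, where $g:=Q(x)|w|^{p-2}w$ satisfies $|g|\leq |w|^{p-1}$. Since $w\in L^p_{loc}$, we have $g\in L^{p/(p-1)}_{loc}$, giving a starting integrability. The key step is a Brezis--Kato--Moser iteration to upgrade the local integrability of $w$ to every $L^s_{loc}$, $s\in[1,\infty)$; this is exactly the mechanism cited in Lemma~\ref{lem:regularity} via \cite[Lemma B.3]{s}. To set it up one writes the coefficient $a(x):=|w(x)|^{p-2}$, notes $|g|\le a(x)|w|$ with $a\in L^{N/2}_{loc}(\rn)$ (because $p<2^*$ forces $(p-2)\cdot\frac{N}{2}<2^*$, so $a=|w|^{p-2}\in L^{N/2}_{loc}$), and runs the iteration. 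Once $w\in L^s_{loc}$ for all finite $s$, it follows that $g\in L^s_{loc}$ for all finite $s$, whence $w\in W^{2,s}_{loc}(\rn)$ for all $s\in[1,\infty)$ by Calderón--Zygmund $L^s$ elliptic estimates, and Morrey's embedding gives $w\in\cC^{1,\alpha}_{loc}(\rn)$ for all $\alpha\in(0,1)$.

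For the interior smoothness statements I would then work separately on $\Omega$ and on $\rn\smallsetminus\overline{\Omega}$, where $Q$ is locally constant ($Q\equiv 1$ on $\Omega$ and $Q\equiv -1$ on the exterior). On each of these open sets the equation becomes $-\Delta w=\pm|w|^{p-2}w$ with a smooth (away from the zero set of $w$) nonlinearity of a $\cC^{1,\alpha}$ function; a Schauder bootstrap then yields $w\in\cC^\infty$ there. Concretely, knowing $w\in\cC^{1,\alpha}_{loc}$ makes the right-hand side $\cC^{0,\alpha}_{loc}$, so $w\in\cC^{2,\alpha}_{loc}$ by interior Schauder estimates, and one iterates. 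This is precisely the scheme of \cite[Appendix B]{s}, which I would cite rather than reproduce. The only mild subtlety is that $|w|^{p-2}w$ need not be smooth as a function of $w$ at points where $w=0$; this is handled in the usual way since $p-2$ is typically not an even integer, but it is already absorbed into the cited Appendix~B argument and presents no real obstacle for $\cC^\infty_{loc}$ on $\Omega$ and on $\rn\smallsetminus\overline\Omega$.

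The main obstacle, such as it is, lies in the initialization of the Brezis--Kato--Moser iteration in the space $E$ rather than $H^1$: one must confirm that the absence of the $\eps^2w$ term and the weaker control at infinity do not affect the \emph{local} argument, which it does not, since the iteration is purely local and only uses $w\in D^{1,2}_{loc}=H^1_{loc}$ together with $w\in L^p_{loc}$. I would therefore note explicitly that $E\hookrightarrow H^1_{loc}(\rn)$, so that every solution of \eqref{eq:limit_problem} is in particular a local $H^1$ weak solution of an equation of the same type as \eqref{eq:Q_problem} (with $\eps=0$), and the conclusion follows verbatim from the proof of Lemma~\ref{lem:regularity}. In short, the statement is a local result and the sign of $Q$, the value of $\eps$, and the global behavior at infinity are all irrelevant to it; the proof is a direct transcription of Lemma~\ref{lem:regularity} with the coefficient bound $|Q|\le 1$ noted once.
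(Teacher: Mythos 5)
Your proposal is correct and follows essentially the same route as the paper: the paper's entire proof is ``Argue as in Lemma~\ref{lem:regularity} with $\eps=0$,'' i.e.\ precisely the Brezis--Kato--Moser iteration with $a(x)=|w(x)|^{p-2}\in L^{N/2}_{loc}(\rn)$ followed by Calder\'on--Zygmund, Morrey, and an interior Schauder bootstrap on $\Omega$ and on $\rn\smallsetminus\overline{\Omega}$, all via \cite[Lemma B.3 and Appendix B]{s}. Your writeup merely makes explicit the two points the paper leaves implicit --- that $E\hookrightarrow H^1_{loc}(\rn)$ so the local argument applies verbatim, and that $|Q|\equiv 1$ makes the sign change irrelevant locally --- which is a faithful expansion rather than a different proof.
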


\begin{proof}
Argue as in Lemma~\ref{lem:regularity} with $\eps=0$.
\end{proof}

The next theorem shows, in particular, that the limit problem~\eqref{eq:limit_problem} has a least energy solution.

\begin{theorem} \label{thm:positive profile}
Let $u_\eps$ be a positive least energy solution to~\eqref{eq:Q_problem}. Then, after passing to a subsequence, $(u_\eps)$ converges strongly in $E$ to a positive least energy solution of~\eqref{eq:limit_problem} as $\eps\to 0$.
\end{theorem}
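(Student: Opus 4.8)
The plan is to follow closely the scheme used in Lemma~\ref{lem:ps} for the Palais-Smale condition, exploiting the compactness produced by the sign of $Q$ outside the bounded set $\o$, exactly as in \cite{fw}. First I would establish that $(u_\eps)$ is bounded in $E$. Since $u_\eps\in\cN_\eps$ and $J_\eps(u_\eps)=c_\eps$, relation~\eqref{eq:energy_on_nehari} gives $\|u_\eps\|_\eps^2=\frac{2p}{p-2}c_\eps$, which is bounded by Lemma~\ref{lem:c_eps to c_0}. In particular $(u_\eps)$ is bounded in $D^{1,2}(\rn)$ and $\eps\|u_\eps\|_{L^2(\rn)}$ stays bounded. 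Because $\o$ is bounded, Hölder's and Sobolev's inequalities give $\io|u_\eps|^p\leq C\|u_\eps\|^p$, and then $\int_{\rn\smallsetminus\o}|u_\eps|^p=\io|u_\eps|^p-\|u_\eps\|_\eps^2\leq\io|u_\eps|^p$ is bounded as well; hence $(u_\eps)$ is bounded in $L^p(\rn)$, and therefore in $E$. Passing to a subsequence, $u_\eps\rh w$ weakly in $E$, $u_\eps\to w$ in $L^p_{loc}(\rn)$, and $u_\eps\to w$ a.e.\ in $\rn$.

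Next I would check that $w$ solves~\eqref{eq:limit_problem}. Testing $J'_\eps(u_\eps)=0$ against $\vp\in\cC^\infty_c(\rn)$ and letting $\eps\to 0$, the gradient term converges by weak convergence in $D^{1,2}(\rn)$, the term $\eps^2\irn u_\eps\vp$ vanishes since $|\eps^2\irn u_\eps\vp|\leq\eps\,(\eps\|u_\eps\|_{L^2(\rn)})\,\|\vp\|_{L^2(\rn)}\to 0$, and the nonlinear term converges by the local $L^p$-convergence together with the a.e.\ bound on $|u_\eps|^{p-2}u_\eps$. Thus $J'_0(w)\vp=0$ for every $\vp\in\cC^\infty_c(\rn)$.

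The main obstacle is to rule out $w=0$, and this is precisely where the hypothesis $Q\equiv-1$ on $\rn\smallsetminus\o$ with $\o$ bounded is decisive, just as in Lemma~\ref{lem:ps}. Using $\|u_\eps\|_\eps^2=\irn Q(x)|u_\eps|^p$, the strong convergence on the bounded set $\o$, Fatou's lemma on $\rn\smallsetminus\o$, and $c_\eps\to c_0$, I would obtain
$$\frac{2p}{p-2}c_0=\lim_{\eps\to 0}\irn Q(x)|u_\eps|^p=\io|w|^p-\lim_{\eps\to 0}\int_{\rn\smallsetminus\o}|u_\eps|^p\leq\io|w|^p-\int_{\rn\smallsetminus\o}|w|^p=\irn Q(x)|w|^p,$$
where the displayed limit of the tail integral exists because the other two limits do. Since $c_0>0$, this forces $\irn Q(x)|w|^p>0$, so $w\neq 0$ and $w\in\cN_0$, whence $J_0(w)\geq c_0$.

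Finally I would identify the energy and upgrade to strong convergence. Weak lower semicontinuity of $\|\cdot\|$ together with $\|u_\eps\|^2\leq\|u_\eps\|_\eps^2\to\frac{2p}{p-2}c_0$ gives $J_0(w)=\frac{p-2}{2p}\|w\|^2\leq c_0$, so $J_0(w)=c_0$ and $w$ is a least energy solution; moreover $\|u_\eps\|\to\|w\|$, which yields strong convergence in the Hilbert space $D^{1,2}(\rn)$ (and incidentally $\eps\|u_\eps\|_{L^2(\rn)}\to 0$). The inequality above, now an equality since $w\in\cN_0$, combined with Fatou's lemma forces $\int_{\rn\smallsetminus\o}|u_\eps|^p\to\int_{\rn\smallsetminus\o}|w|^p$; together with the a.e.\ convergence and the Brezis-Lieb lemma this gives strong convergence in $L^p(\rn)$, hence in $E$. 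Positivity of $w$ follows because $w\geq 0$ as an a.e.\ limit of positive functions and, rewriting the equation as $-\Delta w-Q^-(x)|w|^{p-2}w=Q^+(x)|w|^{p-2}w\geq 0$ with $Q^+:=\max\{Q,0\}$, $Q^-:=\min\{Q,0\}$ and the regularity of Lemma~\ref{lem:regularity_limit}, the strong maximum principle yields $w>0$ in $\rn$.
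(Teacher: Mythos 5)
Your proposal is correct and follows essentially the same compactness scheme as the paper's proof: boundedness in $E$ from the Nehari identity and the boundedness of $\o$, nontriviality of the weak limit via strong local convergence on $\o$, Fatou's lemma on $\rn\smallsetminus\o$ and $c_\eps\to c_0$, an energy-pinching argument yielding strong convergence in $D^{1,2}(\rn)$ and then in $L^p(\rn)$, and the strong maximum principle for positivity. The only (harmless) divergence is that you identify the limit $w$ as a critical point by passing to the limit in the equation against test functions --- as in Lemma~\ref{lem:ps} --- so that $w\in\cN_0$ is automatic, whereas the paper never passes to the limit in the equation: it projects the weak limit onto $\cN_0$ via a factor $t\in(0,1]$ and deduces $t=1$ from the same energy sandwich, with least-energy status then following from $\cN_0$ being a natural constraint.
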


\begin{proof}
Let $\eps_k\to 0$ and $u_k\in\cN_{\eps_k}$ satisfy $J_{\eps_k}(u_k)=c_{\eps_k}$ and $u_k>0$. By Lemma~\ref{lem:c_eps to c_0}, $(u_k)$ is bounded in $D^{1,2}(\rn)$ and in $L^p_{loc}(\rn)$. Since
\begin{equation}\label{eq:Lp_bounded}
\frac{2p}{p-2}c_{\eps_k}=\irn Q(x)|u_k|^p=\io |u_k|^p - \int_{\rn\smallsetminus\o}|u_k|^p,
\end{equation}
we have that
$$\int_{\rn\smallsetminus\o}|u_k|^p\leq\io |u_k|^p.$$
Hence, $(u_k)$ is bounded in $L^p(\rn)$ and, thus, in $E$. So, after passing to a subsequence, $u_k\rh u$ weakly in $E$, $u_k\to u$ in $L_{loc}^p(\rn)$ and $u_k\to u$ a.e. in $\rn$. Then, $u\geq 0$. Passing to the limit in~\eqref{eq:Lp_bounded} we see that
$$\frac{2p}{p-2}c_0\leq\io |u|^p.$$
This shows that $u\neq 0$. Using Fatou's lemma we obtain
\begin{align} \label{eq:t<1}
\|u\|^2&\leq\liminf_{k\to\infty}\|u_k\|^2\leq\lim_{k\to\infty}\|u_k\|_{\eps_k}^2=\lim_{k\to\infty}\irn Q(x)|u_k|^p \\
&=\lim_{k\to\infty}\io |u_k|^p - \lim_{k\to\infty}\int_{\rn\smallsetminus\o}|u_k|^p\leq\io |u|^p - \int_{\rn\smallsetminus\o}|u|^p=\irn Q(x)|u|^p. \nonumber
\end{align}
Therefore, there exists $t\in(0,1]$ such that $\|tu\|^2=\irn Q(x)|tu|^p$. Then, $tu\in\cN_0$ and as a consequence
$$\frac{2p}{p-2}c_0\leq\|tu\|^2\leq\|u\|^2\leq\liminf_{k\to\infty}\|u_k\|^2\leq\lim_{k\to\infty}\|u_k\|_{\eps_k}^2=\frac{2p}{p-2}c_0.$$
This shows that
\begin{equation} \label{eq:eps u^2}
\irn \eps_k^2|u_k|^2\to 0,
\end{equation}
that $u_k\to u$ strongly in $D^{1,2}(\rn)$ and that $t=1$. Hence, $\|u\|^2=\irn Q(x)|u|^p=\frac{2p}{p-2}c_0$. Combining this identity with~\eqref{eq:t<1} we see that $u_k\to u$ strongly in $L^p(\rn)$ and, thus, in $E$. As $u\in\cN_0$ and $J_0(u)=c_0$, $u$ is a least energy solution to~\eqref{eq:limit_problem}. Since $u\geq 0$, from Lemma~\ref{lem:regularity_limit} and the maximum principle (see, for instance, \cite[Theorem 9.6]{gt}) we derive that $u>0$ in $\rn$.
\end{proof}
\medskip

\begin{proof}[Proof of Theorem~\ref{thm:main_positive}]
Since $u(x):=\eps^\frac{2}{p-2}v(\eps x)$ is a solution of~\eqref{eq:Q_problem} if and only if $v$ is a solution of~\eqref{eq:Q_eps_problem}, the first statements of Theorem~\ref{thm:main_positive} follow from Theorems~\ref{thm:multiplicity} and~\ref{thm:positive profile}. Next, fix $\vr>0$. Since $(u_n)$ converges strongly in $D^{1,2}(\rn)$, performing a change of variable and using~\eqref{eq:eps u^2} we obtain
$$\frac{\int_{|x|\leq\vr}(|\nabla v_n|^2+v_n^2)}{\irn (|\nabla v_n|^2+v_n^2)} = \frac{\eps_n^{N-\frac{2p}{p-2}}\int_{|x|\leq\frac{\vr}{\eps_n}}(|\nabla u_n|^2+\eps_n^2u_n^2)}{\eps_n^{N-\frac{2p}{p-2}}\irn (|\nabla u_n|^2+\eps_n^2u_n^2)}  = \frac{\int_{|x|\leq\frac{\vr}{\eps_n}}(|\nabla u_n|^2+\eps_n^2u_n^2)}{\irn (|\nabla u_n|^2+\eps_n^2u_n^2)}\to 1.$$
And, as $(u_n)$ converges strongly in $L^p(\rn)$, we get that
$$\frac{\int_{|x|\leq\vr}|v_n|^p}{\irn |v_n|^p} = \frac{\eps_n^{N-\frac{2p}{p-2}}\int_{|x|\leq\frac{\vr}{\eps_n}}|u_n|^p}{\eps_n^{N-\frac{2p}{p-2}}\irn |u_n|^p}  = \frac{\int_{|x|\leq\frac{\vr}{\eps_n}}|u_n|^p}{\irn |u_n|^{p}} \to 1.$$
This completes the proof.
\end{proof}

\section{Existence of least energy nodal solutions}\label{n:sec}

Our next goal is to investigate the existence of least energy nodal solutions to~\eqref{eq:Q_problem} and to describe their behavior as $\eps\to 0$. To this end, we consider the set
$$\cE_\eps:=\{u\in\cN_\eps:u^+\in\cN_\eps\text{ \ and \ }u^-\in\cN_\eps\},$$
where $u^+:=\max\{u,0\}$ and $u^-:=\min\{u,0\}$. The nodal solutions to~\eqref{eq:Q_problem} belong to $\cE_\eps$. Let
$$F_\eps(u):=J'_\eps(u)u=\|u\|_\eps^2-\irn Q(x)|u|^p, \qquad u\in H^1(\rn),$$
$\cU:=\{v\in L^p(\rn):\irn Q(x)|v|^p>0\}$ and $\what{\cN}_\eps:=\{w\in\cN_\eps:w^+\in\cU, \ w^-\in\cU\}$. Note that $\cE_\eps\subset \what{\cN}_\eps$. Define
$$\what{\cN}_\eps^+:=\{w\in\what{\cN}_\eps:F_\eps(w^+)<0\}\qquad\text{and}\qquad\what{\cN}_\eps^-:=\{w\in\what{\cN}_\eps:F_\eps(w^+)>0\}.$$
The following lemma is obtained by adapting the argument of \cite[Lemma 2.4]{ccn}.

\begin{lemma} \label{lem:gamma}
Given $u\in\cE_\eps$ there exists a continuous map $\gamma:[0,1]\to\cN_\eps$ such that $\gamma(0)=u^+$, $\gamma(1)=u^-$, $\gamma(\frac{1}{2})=u$,
$$0<J_\eps(\gamma(s))<J_\eps(u)\qquad\text{if \ }s\in[0,\tfrac{1}{2})\cup(\tfrac{1}{2},1]$$
and
$$\gamma(s)\in\what{\cN}_\eps^+\text{ \ iff \ }s\in(0,\tfrac{1}{2}) \qquad\text{and}\qquad \gamma(s)\in\what{\cN}_\eps^-\text{ \ iff \ }s\in(\tfrac{1}{2},1).$$
\end{lemma}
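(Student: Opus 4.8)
The plan is to exploit that $u^+$ and $u^-$ have disjoint supports, so that $\nabla u^+\cdot\nabla u^-=0$ and $u^+u^-=0$ almost everywhere; hence $\langle u^+,u^-\rangle_\eps=0$ and $|u|^p=|u^+|^p+|u^-|^p$ pointwise. Consequently, for all $s,t\ge 0$,
$$J_\eps(su^++tu^-)=g_+(s)+g_-(t),\qquad g_\pm(r):=\tfrac{r^2}{2}\|u^\pm\|_\eps^2-\tfrac{r^p}{p}\irn Q(x)|u^\pm|^p.$$
Since $u^\pm\in\cN_\eps$, Lemma~\ref{lem:t_u} shows that each $g_\pm$ attains its strict maximum at $r=1$, with $g_\pm(1)=J_\eps(u^\pm)$; in particular $J_\eps(u)=J_\eps(u^+)+J_\eps(u^-)$. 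This separable structure is what drives the whole construction.

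The path will be obtained by deforming $u$ linearly towards $u^+$ and towards $u^-$ and then projecting radially onto $\cN_\eps$. Set
$$w_s:=\begin{cases} u^++2s\,u^- & \text{if }s\in[0,\tfrac12],\\ 2(1-s)\,u^++u^- & \text{if }s\in[\tfrac12,1].\end{cases}$$
Here $w_s^+$ and $w_s^-$ are nonnegative multiples of $u^+$ and $u^-$, so $\irn Q(x)|w_s|^p>0$ because $\irn Q(x)|u^\pm|^p=\|u^\pm\|_\eps^2>0$; thus $w_s\in H^1(\rn)\cap\cU$, and I define $\gamma(s):=t_{w_s}w_s\in\cN_\eps$ with $t_{w_s}$ as in Lemma~\ref{lem:t_u}. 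Since $u^+,u,u^-\in\cN_\eps$ one has $t_{u^+}=t_u=t_{u^-}=1$, so $\gamma(0)=u^+$, $\gamma(\tfrac12)=u$, $\gamma(1)=u^-$; the two branches agree at $s=\tfrac12$, and as $\irn Q(x)|w_s|^p$ stays bounded away from $0$ along the path, continuity of $s\mapsto w_s$ together with continuity of the projection $w\mapsto t_w w$ gives that $\gamma$ is continuous.

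For the energy, I would use that $t_{w_s}$ maximizes $t\mapsto J_\eps(tw_s)$, so that for $s\in[0,\tfrac12]$,
$$J_\eps(\gamma(s))=\max_{t\ge0}J_\eps(tw_s)=\max_{t\ge0}\big(g_+(t)+g_-(2st)\big)\le g_+(1)+g_-(1)=J_\eps(u),$$
using $g_+(t)\le g_+(1)$ and $g_-(2st)\le g_-(1)$. The inequality is strict whenever $s\ne\tfrac12$: the maximizer $t^\ast=t_{w_s}>0$ cannot satisfy both $t^\ast=1$ and $2st^\ast=1$ unless $s=\tfrac12$, and by the strict monotonicity in Lemma~\ref{lem:t_u} at least one of the two bounds is then strict. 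The branch $s\in[\tfrac12,1]$ is handled symmetrically. Positivity $J_\eps(\gamma(s))>0$ is immediate from $\gamma(s)\in\cN_\eps$ and~\eqref{eq:energy_on_nehari}.

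Finally, to place $\gamma(s)$ in $\what{\cN}_\eps^\pm$ I would compute the sign of $F_\eps(\gamma(s)^+)$. For $s\in(0,\tfrac12)$ one has $\gamma(s)^+=t_{w_s}u^+$ and $\gamma(s)^-=2s\,t_{w_s}u^-$, both nonzero, so $\gamma(s)^\pm\in\cU$ and $\gamma(s)\in\what{\cN}_\eps$; moreover, writing $A:=\|u^+\|_\eps^2$ and $B:=\|u^-\|_\eps^2$,
$$t_{w_s}^{\,p-2}=\frac{A+(2s)^2B}{A+(2s)^pB}>1\quad\text{for }0<2s<1,$$
since $p>2$, and $F_\eps(t_{w_s}u^+)=A\,t_{w_s}^2\big(1-t_{w_s}^{\,p-2}\big)<0$, so $\gamma(s)\in\what{\cN}_\eps^+$. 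An analogous computation on $(\tfrac12,1)$ gives $t_{w_s}\,2(1-s)<1$, hence $F_\eps(\gamma(s)^+)>0$ and $\gamma(s)\in\what{\cN}_\eps^-$. The step requiring the most care is this last sign analysis: the exponent $p>2$ is exactly what forces the projected positive part to overshoot its own Nehari point ($\what{\cN}_\eps^+$) on the first half and to undershoot it ($\what{\cN}_\eps^-$) on the second, and tracking the factors $t_{w_s}$ and $2s$ (resp.\ $2(1-s)$) simultaneously is the only delicate bookkeeping.
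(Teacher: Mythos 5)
Your construction is correct and is essentially the paper's own proof: both take the segment joining $u^+$ to $u^-$ (yours merely reparametrized in two branches so that $u$ sits at $s=\tfrac12$), project it radially onto $\cN_\eps$ via Lemma~\ref{lem:t_u}, derive the strict energy bound from the disjoint-support splitting $J_\eps(tu^++ru^-)=g_+(t)+g_-(r)$ with each $g_\pm$ attaining its strict maximum at $1$, and decide membership in $\what{\cN}_\eps^\pm$ through the sign of $F_\eps$ applied to the positive part. If anything, your explicit formula for $t_{w_s}$ makes the sign bookkeeping more transparent than the paper's shorthand (which writes $\gamma_u(s)^+=2(1-s)u^+$, i.e.\ $t_{w_s}=2$, literally valid only at $s=\tfrac12$), but the underlying argument is the same.
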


\begin{proof}
Given $u\in\cE_\eps$, define $\gamma=\gamma_u:[0,1]\to\cN_\eps$ as
$$\gamma(s):= t_{u_s}u_s,\qquad\text{where}\quad u_s:=(1-s)u^++su^-$$
and $t_{u_s}$ is the unique positive number such that $t_{u_s}u_s\in\cN_\eps$, as in Lemma~\ref{lem:t_u}. Note that $u_s\in\cU$ for every $s\in[0,1]$, so $t_{u_s}$ is well defined. As $u^+,u^-,u\in\cN_\eps$, this path satisfies
$$\gamma(0)=u^+,\qquad \gamma(1)=u^-,\qquad\gamma\left(\tfrac{1}{2}\right)=u.$$
Fix $s\in(0,1)$ and set $w:=\gamma(s)$. Then, $w^+=t_{u_s}(1-s)u^+$ and $w^-=t_{u_s}su^-$. Since $w\in\cN_\eps$ we have that
$$\|w^+\|^2+\|w^-\|^2=\|w\|^2=\irn Q(x)|w|^p=\irn Q(x)|w^+|^p+\irn Q(x)|w^-|^p.$$
Therefore, recalling that $u^+,u^-\in\cN_\eps$, we obtain
\begin{align*}
w\in\what{\cN}_\eps^+&\quad\Longleftrightarrow\quad \|w^+\|^2<\irn Q(x)|w^+|^p\quad\Longleftrightarrow\quad \|w^-\|^2>\irn Q(x)|w^-|^p \\
& \quad\Longleftrightarrow\quad (t_{u_s}(1-s))^{p-2}>1>(t_{u_s}s)^{p-2}\quad\Longleftrightarrow\quad s\in(0,\tfrac{1}{2}).
\end{align*}
Similarly,
$$w\in\what{\cN}_\eps^- \quad\Longleftrightarrow\quad s\in(\tfrac{1}{2},1).$$
Next, fix $s\in(0,\frac{1}{2})$. Note that $t_{w^+}=(t_{u_s}(1-s))^{-1}$.  Then, from Lemma~\ref{lem:t_u}, we deduce that $J_\eps(t_{w^+}w^-)=J_\eps(\frac{s}{1-s}u^-)>0$ and
\begin{align*}
0&<J_\eps(t_{w^+}w^+)<J_\eps(t_{w^+}w^+)+J_\eps(t_{w^+}w^-)=J_\eps(t_{w^+}w) \\
&<J_\eps(w)=J_\eps(t_{u_s}(1-s)u^+)+J_\eps(t_{u_s}su^-)<J_\eps(u^+)+J_\eps(u^-)=J_\eps(u),
\end{align*}
as claimed. The argument for $s\in(\frac{1}{2},1)$ is similar. This completes the proof.
\end{proof}

\begin{lemma} \label{lem:intersection}
If $\sigma:[a,b]\to\what{\cN}_\eps$ is continuous, $\sigma(a)\in \what{\cN}_\eps^+$ and $\sigma(b)\in \what{\cN}_\eps^-$, then there exists $t\in(a,b)$ such that $\sigma(t)\in\cE_\eps$.
\end{lemma}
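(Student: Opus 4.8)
The plan is to reduce the statement to the intermediate value theorem applied to a single scalar function of $t$. The key algebraic observation is that the constraint functional $F_\eps$ splits additively over positive and negative parts: since $w^+$ and $w^-$ have disjoint supports, $\|w\|_\eps^2=\|w^+\|_\eps^2+\|w^-\|_\eps^2$ and $\irn Q(x)|w|^p=\irn Q(x)|w^+|^p+\irn Q(x)|w^-|^p$, so that
\begin{equation*}
F_\eps(w)=F_\eps(w^+)+F_\eps(w^-)\qquad\text{for every }w\in H^1(\rn).
\end{equation*}
In particular, for $w\in\cN_\eps$ (where $F_\eps(w)=0$) one gets $F_\eps(w^-)=-F_\eps(w^+)$. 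Hence, within $\what{\cN}_\eps$, the three relevant sets are distinguished solely by the sign of $F_\eps(w^+)$: membership in $\what{\cN}_\eps^+$, $\cE_\eps$, $\what{\cN}_\eps^-$ corresponds to $F_\eps(w^+)<0$, $=0$, $>0$, respectively. Indeed, $w\in\what{\cN}_\eps$ already forces $w^\pm\in\cU$, hence $w^\pm\neq0$, so the vanishing of $F_\eps(w^+)$ is exactly the condition $w^+\in\cN_\eps$, which together with the splitting identity gives $w^-\in\cN_\eps$ and thus $w\in\cE_\eps$.

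With this reduction in hand, I would define $g:[a,b]\to\r$ by $g(t):=F_\eps(\sigma(t)^+)$. By hypothesis $\sigma(a)\in\what{\cN}_\eps^+$ and $\sigma(b)\in\what{\cN}_\eps^-$, so $g(a)<0$ and $g(b)>0$. Assuming $g$ is continuous, the intermediate value theorem produces $t\in(a,b)$ with $g(t)=0$, i.e.\ $F_\eps(\sigma(t)^+)=0$; since $\sigma(t)\in\what{\cN}_\eps$ yields $\sigma(t)^+\in\cU$, hence $\sigma(t)^+\neq0$, we conclude $\sigma(t)^+\in\cN_\eps$, and by the splitting identity $\sigma(t)^-\in\cN_\eps$ as well, so $\sigma(t)\in\cE_\eps$, as desired.

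The hard part will be establishing the continuity of $g$, which is essentially the only nontrivial ingredient. I would factor $g$ as the composition $t\mapsto\sigma(t)\mapsto\sigma(t)^+\mapsto F_\eps(\sigma(t)^+)$. The outer map $F_\eps:H^1(\rn)\to\r$ is continuous: the quadratic term $\|\cdot\|_\eps^2$ is clearly continuous, and $u\mapsto\irn Q(x)|u|^p$ is continuous because $p\in(2,2^*)$ gives the Sobolev embedding $H^1(\rn)\hookrightarrow L^p(\rn)$, $Q$ is bounded, and $u\mapsto|u|^p$ is continuous from $L^p(\rn)$ into $L^1(\rn)$. The inner map $\sigma$ is continuous into $H^1(\rn)$ by assumption. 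The crux is therefore the continuity of the positive-part operator $u\mapsto u^+$ on $H^1(\rn)$; this is a standard fact, which follows from $u^+=\tfrac12(u+|u|)$ together with the continuity of $u\mapsto|u|$ in $H^1(\rn)$ (using $\nabla|u|=\mathrm{sgn}(u)\nabla u$ a.e.\ and the continuity of truncation in Sobolev spaces). Composing these three continuous maps yields the continuity of $g$ and completes the proof.
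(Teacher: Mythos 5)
Your proposal is correct and follows essentially the same route as the paper: the paper likewise defines $f(t):=F_\eps(\sigma(t)^+)$, applies the intermediate value theorem using $f(a)<0<f(b)$, notes $\sigma(t_0)^+\neq 0$ since $\sigma(t_0)^+\in\cU$, and then deduces $\sigma(t_0)^-\in\cN_\eps$ from the additive splitting of $F_\eps$ over positive and negative parts. The only difference is one of detail: the paper simply asserts the continuity of $f$, whereas you justify it via the continuity of $u\mapsto u^+$ on $H^1(\rn)$ and of $F_\eps$, which is a welcome (and correct) elaboration rather than a deviation.
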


\begin{proof}
Since the function $f(t):=F_\eps(\sigma(t)^+)$ is continuous in $[a,b]$, $f(a)<0$ and $f(b)>0$, there exists $t_0\in(a,b)$ such that $f(t_0)=0$. This implies that $\sigma(t_0)^+\in\cN_\eps$, because $u^+\neq 0$ for every $u\in\what{\cN}_\eps$. Observe that, if $u,u^+\in\cN_\eps$ and $u^-\neq 0$, then $u^-\in\cN_\eps$, i.e.,  $u\in\cE_\eps$. This remark shows that $\sigma(t_0)\in\cE_\eps$ and completes the proof.
\end{proof}

Set
$$\what c_\eps:=\inf_{v\in\cE_\eps}J_\eps(v).$$

\begin{lemma} \label{lem:nodal minimum}
If $u\in\cE_\eps$ and $J_\eps(u)=\what c_\eps$, then $u$ is a critical point of $J_\eps$.
\end{lemma}

\begin{proof}
Let $u\in\cE_\eps$ be such that $J_\eps(u)=\what c_\eps$ and let $\gamma:[0,1]\to\cN_\eps$ be as in Lemma~\ref{lem:gamma}.

Arguing by contradiction, assume that $J_\eps'(u)\neq 0$. Fix $\delta>0$ such that $\overline{B}_\delta(u):=\{v\in\cN_\eps:\|v-u\|_\eps\leq\delta\}\subset\what\cN_\eps$ and $\|\nabla_{\cN_\eps} J_\eps(v)\|\geq \alpha>0$ for every $v\in\overline{B}_\delta(u)$, where $\nabla_{\cN_\eps} J_\eps(v)$ is the gradient of $J_\eps(v)|_{\cN_\eps}$ at $v$. Choose $\beta\in(0,\frac{\delta\alpha}{16})$. The deformation lemma \cite[Lemma 5.15]{w} yields a map $\eta:[0,1]\times\cN_\eps\to\cN_\eps$ such that $\eta(t,u)=u$ if $t=0$ or $u\notin J_\eps^{-1}[\what c_\eps-2\beta,\what c_\eps+2\beta]\cap \overline{B}_\delta(u)$, $J_\eps(\eta(1,u))\leq \what c_\eps -\beta$ and $t\mapsto J_\eps(\eta(t,v))$ is nonincreasing for every $v\in\cN_\eps$. Furthermore, $\eta(t,v)\in \overline{B}_\delta(u)$ if $v\in \overline{B}_\delta(u)$ for every $t\in[0,1]$. Define $\sigma:[0,1]\to\cN_\eps$ by $\sigma(s):=\eta(1,\gamma(s))$. Then,
\begin{align*}
J_\eps(\sigma(s))
=J_\eps(\eta(1,\gamma(s)))
\leq
J_\eps(\eta(0,\gamma(s)))
=J_\eps(\gamma(s))<J_\eps(u)\qquad \text{ if $s\in[0,\tfrac{1}{2})\cup(\tfrac{1}{2},1]$ }
\end{align*}
 and $J_\eps(\sigma(\frac{1}{2}))=J_\eps(\eta(1,u))<J_\eps(u)$. This proves that $\sigma(t)\notin\cE_\eps$ for every $t\in[0,1]$.

Note that $\sigma(s)\in\what\cN_\eps$ if $s\in(0,1),$ because $\eta(1,\gamma(s))\in\overline{B}_\delta(u)\subset\what\cN_\eps$ if $\gamma(s)\in \overline{B}_\delta(u)$ and $\eta(1,\gamma(s))=\gamma(s)\in\what\cN_\eps$  if $\gamma(s)\notin \overline{B}_\delta(u)$ and $s\in(0,1)$. Since $\gamma^{-1}(\overline{B}_\delta(u))$ is closed in $[0,1]$ and does not contain $0$ or $1$, there exist $a\in (0,\frac{1}{4})$ and $b\in (\frac{3}{4},1)$ such that $\gamma(a),\gamma(b)\notin \overline{B}_\delta(u)$. Then, $\sigma(a)=\gamma(a)\in\what\cN_\eps^+$ and $\sigma(b)=\gamma(b)\in\what\cN_\eps^-$ and $\sigma:[a,b]\to\what{\cN}_\eps$ satisfies the hypotheses of Lemma~\ref{lem:intersection}. As a consequence, there exists $t\in(a,b)$ such that $\sigma(t)\in\cE_\eps$. This is a contradiction.
\end{proof}

A function $u\in\cE_\eps$ such that $J_\eps(u)=\what c_\eps$ is called a \emph{least energy nodal solution to}~\eqref{eq:Q_problem}. Adapting the argument in \cite[Section 3]{ccn} we prove the following result.

\begin{theorem} \label{thm:nodal existence}
Problem~\eqref{eq:Q_problem} has a least energy nodal solution.
\end{theorem}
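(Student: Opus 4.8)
The plan is to show that the infimum $\what c_\eps$ is attained on $\cE_\eps$; once this is done, Lemma~\ref{lem:nodal minimum} immediately guarantees that a minimizer is a critical point of $J_\eps$, hence a least energy nodal solution. So I would start with a minimizing sequence $(u_k)\subset\cE_\eps$, i.e.\ $J_\eps(u_k)\to\what c_\eps$. Since $u_k^+,u_k^-\in\cN_\eps$ and have disjoint supports, the energy splits as $J_\eps(u_k)=J_\eps(u_k^+)+J_\eps(u_k^-)$, and identity~\eqref{eq:energy_on_nehari} shows that $(u_k^+)$ and $(u_k^-)$ are each bounded in $H^1(\rn)$. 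Passing to a subsequence, $u_k^+\rh w_1\ge 0$ and $u_k^-\rh w_2\le 0$ weakly in $H^1(\rn)$, with $u_k^+\to w_1$, $u_k^-\to w_2$ in $L^p_{loc}(\rn)$ and a.e.; in particular $w_1w_2=0$ a.e., so $w_1=u^+$ and $w_2=u^-$ for $u:=w_1+w_2$.

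The crucial step is to rule out vanishing of either part, and here I would use that $\o$ is bounded. Since $u_k^+\in\cN_\eps$, the lower bound $J_\eps(u_k^+)\ge c_\eps>0$ together with~\eqref{eq:energy_on_nehari} gives $\|u_k^+\|_\eps^2\ge\frac{2p}{p-2}c_\eps$, and since $\|u_k^+\|_\eps^2=\irn Q(x)|u_k^+|^p\le\io|u_k^+|^p$, the compact embedding $H^1(\o)\hookrightarrow L^p(\o)$ yields $\io|u^+|^p=\lim_k\io|u_k^+|^p\ge\frac{2p}{p-2}c_\eps>0$. Hence $u^+\ne 0$, and symmetrically $u^-\ne 0$. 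This is precisely where the structure of the problem (the sign $Q=-1$ outside the bounded set $\o$) is essential, and I expect it to be the main obstacle.

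Next I would control the projections onto $\cN_\eps$. Combining weak lower semicontinuity of $\|\cdot\|_\eps^2$ with strong $L^p(\o)$ convergence and Fatou's lemma on $\rn\smallsetminus\o$ (exactly as in the derivation of~\eqref{eq:t<1} in the proof of Theorem~\ref{thm:positive profile}), I obtain $\|u^+\|_\eps^2\le\irn Q(x)|u^+|^p$, i.e.\ $F_\eps(u^+)\le 0$; by Lemma~\ref{lem:t_u} the projection parameter $t_1:=t_{u^+}$ satisfies $t_1\in(0,1]$, and likewise $t_2:=t_{u^-}\in(0,1]$. Setting $\tilde u:=t_1u^++t_2u^-$, the parts $\tilde u^+=t_1u^+$ and $\tilde u^-=t_2u^-$ lie in $\cN_\eps$ with disjoint supports, so $\tilde u\in\cE_\eps$. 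Finally, since $t_i\le 1$ and $J_\eps(t_1u^+)=\frac{p-2}{2p}t_1^2\|u^+\|_\eps^2$, weak lower semicontinuity gives $J_\eps(t_1u^+)\le\frac{p-2}{2p}\|u^+\|_\eps^2\le\liminf_kJ_\eps(u_k^+)$ and analogously for the negative part; adding and using $\liminf a_k+\liminf b_k\le\liminf(a_k+b_k)$, I conclude
$$J_\eps(\tilde u)=J_\eps(t_1u^+)+J_\eps(t_2u^-)\le\liminf_k\big(J_\eps(u_k^+)+J_\eps(u_k^-)\big)=\what c_\eps.$$
As $\tilde u\in\cE_\eps$ forces $J_\eps(\tilde u)\ge\what c_\eps$, the minimum is attained at $\tilde u$, and Lemma~\ref{lem:nodal minimum} finishes the proof.
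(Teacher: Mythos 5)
Your proposal is correct and follows essentially the same route as the paper's proof of Theorem~\ref{thm:nodal existence}: a minimizing sequence in $\cE_\eps$, weak limits of the positive and negative parts, nonvanishing via the boundedness of $\o$ and local compactness, Fatou's lemma to get projection parameters $t_\pm\in(0,1]$, recombination into an element of $\cE_\eps$, and Lemma~\ref{lem:nodal minimum} to conclude. The only cosmetic differences are that you bound below by $c_\eps>0$ directly where the paper invokes Lemma~\ref{lem:c_eps to c_0} to use $c_0$, and you stop at the minimizer $\tilde u=t_1u^++t_2u^-$ rather than deducing, as the paper does, that $t_\pm=1$ so that the weak limit $u$ itself attains $\what c_\eps$ --- both are equally valid for the existence statement.
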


\begin{proof}
Let $(u_k)$ be a sequence in $\cE_\eps$ such that $J_\eps(u_k)\to\what c_\eps$. Since $(u_k)$, $(u_k^+)$ and $(u_k^-)$ are bounded in $H^1(\rn)$, after passing to a subsequence,
\begin{align*}
&u_k\rh u\text{ \ weakly in \ }H^1(\rn),\qquad u_k\to u\text{ \ in \ }L_{loc}^p(\rn), \\
&u_k^+\rh v\text{ \ weakly in \ }H^1(\rn),\qquad u_k^+\to v\text{ \ in \ }L_{loc}^p(\rn), \\
&u_k^-\rh w\text{ \ weakly in \ }H^1(\rn),\qquad u_k^-\to w\text{ \ in \ }L_{loc}^p(\rn).
\end{align*}
Since the maps $z\mapsto z^+$ and $z\mapsto z^-$ are continuous in $L^p(\rn)$ we have that $v=u^+$ and $w=u^-$. And, as $u_k^+\in\cN_\eps$, using Lemma~\ref{lem:c_eps to c_0} we obtain
$$\frac{2p}{p-2}c_0\leq\liminf_{k\to\infty}\|u_k^+\|_\eps^2=\liminf_{k\to\infty}\irn Q(x)|u_k^+|^p\leq\lim_{k\to\infty}\io|u_k^+|^p=\io|u^+|^p.$$ 
Therefore, $u^+\neq 0$. Furthermore, using Fatou's lemma,
\begin{align*}
\|u^+\|_\eps^2&\leq\liminf_{k\to\infty}\|u_k^+\|_{\eps}^2=\liminf_{k\to\infty}\irn Q(x)|u_k^+|^p=\lim_{k\to\infty}\io |u_k^+|^p - \liminf_{k\to\infty}\int_{\rn\smallsetminus\o}|u_k^+|^p \\
&\leq\io |u^+|^p - \int_{\rn\smallsetminus\o}|u^+|^p=\irn Q(x)|u^+|^p. \nonumber
\end{align*}
Hence, there exists $t_{u^+}\in(0,1]$ such that $\|t_{u^+}u^+\|_\eps^2=\irn Q(x)|t_{u^+}u^+|^p$. Similarly, $u^-\neq 0$ and there exists $t_{u^-}\in(0,1]$ such that $\|t_{u^-}u^-\|_\eps^2=\irn Q(x)|t_{u^-}u^-|^p$. It follows that $w:=t_{u^+}u^+ + t_{u^-}u^-\in\cE_\eps$. Thus,
\begin{align*}
\frac{2p}{p-2}\what c_\eps\leq\|w\|_\eps^2=\|t_{u^+}u^+\|_\eps^2+\|t_{u^-}u^-\|_\eps^2 \leq \|u^+\|_\eps^2+\|u^-\|_\eps^2 = \|u\|_\eps^2\leq\lim_{k\to\infty}\|u_k\|_{\eps}^2=\frac{2p}{p-2}\what c_\eps.
\end{align*}
As a consequence, $t_{u^+}=1=t_{u^-}$, $u\in\cE_\eps$ and $J_\eps(u)=\what c_\eps$, i.e., $u$ is a least energy nodal solution of~\eqref{eq:Q_problem}.
\end{proof}

\section{The limit profile of least energy nodal solutions}\label{nodlimit:sec}

Recall the definition of $\cN_0$ given in~\eqref{N0:def}.  The nodal solutions to~\eqref{eq:limit_problem} belong to the set
\begin{align}\label{E0:def}
\cE_0:=\{u\in\cN_0:u^+\in\cN_0\text{ \ and \ }u^-\in\cN_0\}.    
\end{align}
Let
$$\what c_0:=\inf_{v\in\cE_0}J_0(v).$$

\begin{lemma}
If $u\in\cE_0$ and $J_0(u)=\what c_0$, then $u$ is a critical point of $J_0$.
\end{lemma}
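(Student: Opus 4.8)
The plan is to mirror the structure of the proof of Lemma~\ref{lem:nodal minimum} (the $\eps>0$ case) and adapt it to the limit functional $J_0$ on the Banach manifold $\cN_0\subset E$. First I would establish the analogue of Lemma~\ref{lem:gamma} in the limit setting: given $u\in\cE_0$, construct a continuous path $\gamma:[0,1]\to\cN_0$ with $\gamma(0)=u^+$, $\gamma(1)=u^-$, $\gamma(\tfrac12)=u$, such that $J_0(\gamma(s))<J_0(u)$ for $s\neq\tfrac12$ and $\gamma(s)$ lies in the appropriate ``positive'' region $\what\cN_0^+$ for $s\in(0,\tfrac12)$ and in $\what\cN_0^-$ for $s\in(\tfrac12,1)$. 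The construction is identical: for $w\in\what\cN_0:=\{w\in\cN_0:w^+\in\cU,\ w^-\in\cU\}$ one sets $w_s:=(1-s)t_{w^+}w^++st_{w^-}w^-$ and radially projects onto $\cN_0$ using the limit analogue of Lemma~\ref{lem:t_u}, which holds verbatim since $J_0(tu)=at^2-bt^p$ with $a=\tfrac12\|u\|^2>0$ and $b=\tfrac1p\irn Q(x)|u|^p>0$ for $u\in\cU$. I would also record the limit analogue of the intersection Lemma~\ref{lem:intersection}: the map $t\mapsto F_0(\sigma(t)^+)$, where $F_0(u):=J_0'(u)u$, is continuous and changes sign along a path from $\what\cN_0^+$ to $\what\cN_0^-$, forcing a zero and hence a point in $\cE_0$.

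With these two auxiliary facts in place, the argument is the standard deformation/contradiction scheme. Suppose $u\in\cE_0$ achieves $\what c_0$ but $J_0'(u)\neq0$. Then I would choose $\delta>0$ small enough that the closed ball $\overline B_\delta(u)$ (in the $\cN_0$ norm) is contained in $\what\cN_0$ and the constrained gradient $\nabla_{\cN_0}J_0$ is bounded below by some $\alpha>0$ on it. Applying the deformation lemma to $J_0|_{\cN_0}$ yields a deformation $\eta$ that pushes sublevel sets down by a fixed amount $\beta$ near $u$ while fixing everything outside a small neighborhood and never increasing $J_0$. Composing with the path $\gamma$ gives $\sigma(s):=\eta(1,\gamma(s))$ satisfying $J_0(\sigma(s))<J_0(u)=\what c_0$ for all $s$, which rules out $\sigma(s)\in\cE_0$ (by definition of $\what c_0$). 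On the other hand, choosing $a\in(0,\tfrac14)$ and $b\in(\tfrac34,1)$ with $\gamma(a),\gamma(b)\notin\overline B_\delta(u)$, one has $\sigma(a)=\gamma(a)\in\what\cN_0^+$ and $\sigma(b)=\gamma(b)\in\what\cN_0^-$, so the intersection lemma produces $t\in(a,b)$ with $\sigma(t)\in\cE_0$ --- a contradiction.

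The main obstacle, and the point where I would be most careful, is verifying that the Palais--Smale–type machinery and the deformation lemma are available on $\cN_0$ as a manifold in the Banach space $E$ rather than the Hilbert space $H^1(\rn)$. In the $\eps>0$ case the inner product $\langle\cdot,\cdot\rangle_\eps$ gives a clean notion of orthogonal projection onto the tangent space and a genuine gradient; in $E$ one works with the duality pairing and must ensure that $\cN_0$ is a $\cC^2$ Banach submanifold of $E$ (already asserted after~\eqref{N0:def}) and that $J_0$ restricted to it admits a pseudo-gradient flow, so that the deformation lemma~\cite[Lemma 5.15]{w} still applies. Here one must check that $F_0'(u)u=(2-p)\|u\|^2\neq0$ on $\cN_0$ so that $0$ is a regular value, giving the manifold structure and the transversality needed for the deformation to stay on $\cN_0$. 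Once these structural points are settled, the remaining estimates are routine and formally identical to the $\eps>0$ proof, with $\|\cdot\|_\eps$ replaced by $\|\cdot\|$ throughout.
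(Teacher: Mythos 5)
Your proposal is correct and takes essentially the same approach as the paper: the paper's entire proof of this lemma is the single line ``The proof is the same as that of Lemma~\ref{lem:nodal minimum}'', which is exactly the adaptation you carry out (limit analogues of Lemmas~\ref{lem:gamma} and~\ref{lem:intersection}, then the deformation/contradiction scheme). Your added care about the Banach-manifold structure of $\cN_0\subset E$ --- that $0$ is a regular value of $F_0$ since $F_0'(u)u=(2-p)\|u\|^2\neq 0$ on $\cN_0$, and that the deformation lemma runs via a pseudo-gradient flow rather than the $\langle\cdot,\cdot\rangle_\eps$-gradient available in the $\eps>0$ case --- addresses precisely what the paper leaves implicit, and your verification is sound.
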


\begin{proof}
The proof is the same as that of Lemma~\ref{lem:nodal minimum}.
\end{proof}

A function $u\in\cE_0$ such that $J_0(u)=\what c_0$ is called a \emph{least energy nodal solution to~\eqref{eq:limit_problem}}.

\begin{lemma} \label{lem:wc_eps to wc_0}
$\lim_{\eps\to 0}\what c_\eps=\what c_0$.
\end{lemma}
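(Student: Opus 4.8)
The goal is to prove $\lim_{\eps\to 0}\what c_\eps = \what c_0$, the nodal analogue of Lemma~\ref{lem:c_eps to c_0}. The plan is to mimic the two-sided estimate used there, but now carried out on the nodal constraint sets $\cE_\eps$ and $\cE_0$, where the extra difficulty is that both the positive and negative parts must simultaneously be projected onto the appropriate Nehari manifold.

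For the lower bound $\liminf_{\eps\to 0}\what c_\eps\geq \what c_0$, I would argue as follows. Fix $u\in\cE_\eps$, so $u^+,u^-\in\cN_\eps\subset E$ and in particular $u^+,u^-\in\cU$. Since $\|\cdot\|^2\leq\|\cdot\|_\eps^2$, the scaling factors $t^0_{u^\pm}:=\bigl(\|u^\pm\|^2/\irn Q(x)|u^\pm|^p\bigr)^{1/(p-2)}$ that project $u^\pm$ onto $\cN_0$ satisfy $t^0_{u^\pm}\leq 1$, and $w:=t^0_{u^+}u^+ + t^0_{u^-}u^-\in\cE_0$. Using~\eqref{eq:energy_on_nehari}-type identities and the monotonicity from Lemma~\ref{lem:t_u}, one gets $J_0(w)=J_0(t^0_{u^+}u^+)+J_0(t^0_{u^-}u^-)\leq J_\eps(u^+)+J_\eps(u^-)=J_\eps(u)$, hence $\what c_0\leq J_\eps(u)$ for every $u\in\cE_\eps$, giving $\what c_0\leq\what c_\eps$ and therefore the desired $\liminf$ inequality (indeed $\what c_0\le\what c_\eps$ for all $\eps$).

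For the upper bound $\limsup_{\eps\to 0}\what c_\eps\leq\what c_0$, I would take a minimizing sequence for $\what c_0$ consisting of sign-changing functions $\vp_k\in\cE_0$ with compactly supported (or at least $H^1$) positive and negative parts, $J_0(\vp_k)\to\what c_0$; such $\vp_k$ can be obtained by density, approximating both $\vp_k^+$ and $\vp_k^-$ in $E$ by $\cC_c^\infty$ functions with disjoint supports and reprojecting onto $\cN_0$. For fixed $k$, project $\vp_k^+$ and $\vp_k^-$ onto $\cN_\eps$ via the factors $t_{\eps,k}^\pm:=\bigl(\|\vp_k^\pm\|_\eps^2/\irn Q(x)|\vp_k^\pm|^p\bigr)^{1/(p-2)}$, set $w_{\eps,k}:=t_{\eps,k}^+\vp_k^+ + t_{\eps,k}^-\vp_k^-\in\cE_\eps$, and note that as $\eps\to 0$ one has $t_{\eps,k}^\pm\to 1$ and $\|\vp_k^\pm\|_\eps^2\to\|\vp_k^\pm\|^2$ since $\vp_k^\pm\in H^1(\rn)$. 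Hence $\limsup_{\eps\to 0}\what c_\eps\leq\limsup_{\eps\to 0}J_\eps(w_{\eps,k})=\tfrac{p-2}{2p}(\|\vp_k^+\|^2+\|\vp_k^-\|^2)=J_0(\vp_k)$, and letting $k\to\infty$ yields $\limsup_{\eps\to 0}\what c_\eps\leq\what c_0$.

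The main obstacle I anticipate is not the limiting computation but ensuring the approximating nodal elements genuinely lie in $\cE_0$: one must verify that the positive and negative parts remain in $\cU$ (i.e. that $\irn Q(x)|\vp_k^\pm|^p>0$) after the $\cC_c^\infty$ truncation, so that the projections onto $\cN_0$ are well defined, and that replacing $\vp_k^\pm$ by their projections does not destroy the property of being sign-changing. Controlling this requires keeping the supports of the two parts disjoint and ensuring each part charges $\o$ enough; once the density argument is set up carefully, the rest is a routine application of Lemma~\ref{lem:t_u} and the continuity of the scaling factors in $\eps$, exactly as in Lemma~\ref{lem:c_eps to c_0}.
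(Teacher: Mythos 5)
Your proposal is correct and follows essentially the same route as the paper: the lower bound $\what c_\eps\geq\what c_0$ comes from $\|\cdot\|\leq\|\cdot\|_\eps$ combined with projecting the positive and negative parts onto $\cN_0$ (the paper phrases this as comparing explicit infimum formulas over the sets $\what H\subset\what E$ of functions with $u^\pm\in\cU$), and the upper bound projects the parts of a minimizing sequence $(\vp_k)\subset\cC^\infty_c(\rn)\cap\cE_0$ onto $\cN_\eps$ with $t_{\eps,k}^\pm\to 1$, exactly as you do. The density subtlety you flag is legitimate, but the paper glosses over it in the same way, simply asserting the existence of such a sequence $(\vp_k)$.
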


\begin{proof}
Let $\what H:=\{u\in H^1(\rn):u^+,u^-\in \cU\}$ with $\cU$ as in~\eqref{eq:U}. Note that, if $u\in\what H$ and $t_{\eps, u^+},\,t_{\eps, u^-}\in(0,\infty)$ are such that $t_{\eps,u^+}u^+,t_{\eps,u^-}u^-\in\cN_\eps$, then $\what u_\eps:=t_{\eps,u^+}u^+ + t_{\eps,u^-}u^-\in\cE_\eps$ and $J_\eps(\what u_\eps)=J_\eps(t_{\eps,u^+}u^+) + J_\eps(t_{\eps,u^-}u^-)$. Using Lemma~\ref{lem:t_u} we obtain
\begin{align*}
\what c_\eps &=\inf_{u\in \what H}J_\eps(\what u_\eps)=\inf_{u\in \what H}(J_\eps(t_{\eps,u^+}u^+) + J_\eps(t_{\eps,u^-}u^-)) \\
&=\inf_{u\in \what H}\left(\frac{p-2}{2p}\left(\frac{\|u^+\|_\eps^2}{\Big(\irn Q(x)|u^+|^p\Big)^{2/p}}\right)^\frac{p}{p-2} + \frac{p-2}{2p}\left(\frac{\|u^-\|_\eps^2}{\Big(\irn Q(x)|u^-|^p\Big)^{2/p}}\right)^\frac{p}{p-2}\right).
\end{align*}
Similarly, setting $\what E:=\{u\in E:u^+,u^-\in\cU\}$ we get
$$\what c_0=\inf_{u\in \what E}\left(\frac{p-2}{2p}\left(\frac{\|u^+\|^2}{\Big(\irn Q(x)|u^+|^p\Big)^{2/p}}\right)^\frac{p}{p-2} + \frac{p-2}{2p}\left(\frac{\|u^-\|^2}{\Big(\irn Q(x)|u^-|^p\Big)^{2/p}}\right)^\frac{p}{p-2}\right).$$
Since $\|u\|_\eps\geq\|u\|$ for every $u\in H^1(\rn)$ and $\what H\subset\what E$, we have that $\what c_\eps\geq\what c_0$ for every $\eps>0$. Therefore, $\liminf_{\eps\to 0}\what c_\eps\geq\what c_0$. 

Let $(\vp_k)$ be a sequence in $\cC^\infty_c(\rn)\cap \cE_0$ such that $J_0(\vp_k)\to\what c_0$. Set $t_{\eps,k}^\pm:=\left(\frac{\|\vp_k^\pm\|_\eps^2}{\irn Q(x)|\vp_k^\pm|^p}\right)^{1/(p-2)}$. Then $t_{\eps,k}^\pm\vp_k^\pm\in\cN_\eps$ and, for each fixed $k$, we have that $\lim_{\eps\to 0}t_{\eps,k}^\pm=1$ and
$$\limsup_{\eps\to 0}\what c_\eps\leq \limsup_{\eps\to 0}\frac{p-2}{2p}\left(\|t_{\eps,k}^+\vp_k^+\|_\eps^2 + \|t_{\eps,k}^-\vp_k^-\|_\eps^2\right)=\frac{p-2}{2p}\left(\|\vp_k^+\|^2 + \|\vp_k^-\|^2\right)=J_0(\vp_k).$$
Letting $k\to\infty$ we get that $\limsup_{\eps\to 0}\what c_\eps\leq\what c_0$. This completes the proof.
\end{proof}

The following result yields the existence of a least energy nodal solution to the limit problem~\eqref{eq:limit_problem}.

\begin{theorem} \label{thm:nodal profile}
Let $u_\eps$ be a least energy nodal solution to~\eqref{eq:Q_problem}. Then, after passing to a subsequence, $(u_\eps)$ converges strongly in $E$ to a least energy nodal solution of~\eqref{eq:limit_problem} as $\eps\to 0$.
\end{theorem}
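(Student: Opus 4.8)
The plan is to follow the scheme of Theorem~\ref{thm:positive profile}, but now keeping track of the positive and negative parts $u_k^+,u_k^-$ as in the existence proof of Theorem~\ref{thm:nodal existence}. Fix $\eps_k\to 0$ and let $u_k\in\cE_{\eps_k}$ satisfy $J_{\eps_k}(u_k)=\what c_{\eps_k}$, so that $u_k^\pm\in\cN_{\eps_k}$ and $J_{\eps_k}(u_k)=J_{\eps_k}(u_k^+)+J_{\eps_k}(u_k^-)$. By Lemma~\ref{lem:wc_eps to wc_0} we have $\what c_{\eps_k}\to\what c_0$, hence each $J_{\eps_k}(u_k^\pm)=\frac{p-2}{2p}\|u_k^\pm\|_{\eps_k}^2$ is bounded, which bounds $(u_k^\pm)$ in $D^{1,2}(\rn)$. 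The Nehari identity $\|u_k^\pm\|_{\eps_k}^2=\io|u_k^\pm|^p-\int_{\rn\smallsetminus\o}|u_k^\pm|^p$ forces $\int_{\rn\smallsetminus\o}|u_k^\pm|^p\le\io|u_k^\pm|^p$, and since $\o$ is bounded the $D^{1,2}$ bound controls $\io|u_k^\pm|^p$; thus $(u_k^\pm)$ is bounded in $L^p(\rn)$ and hence in $E$. Passing to a subsequence, $u_k^+\rh u^+$ and $u_k^-\rh u^-$ weakly in $E$, with convergence in $L^p_{loc}(\rn)$ and a.e., so $u_k\rh u:=u^++u^-$ and the signs of $u^\pm$ are preserved.

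I would then establish nontriviality of both parts as in Theorem~\ref{thm:nodal existence}: from $\|u_k^\pm\|_{\eps_k}^2=\irn Q(x)|u_k^\pm|^p\le\io|u_k^\pm|^p$ and $J_{\eps_k}(u_k^\pm)\ge c_{\eps_k}$ (recall $c_{\eps_k}\to c_0$ by Lemma~\ref{lem:c_eps to c_0}) we get $\frac{2p}{p-2}c_0\le\io|u^\pm|^p$, so $u^\pm\neq 0$. Weak lower semicontinuity of $\|\cdot\|$, the inequality $\|u_k^\pm\|^2\le\|u_k^\pm\|_{\eps_k}^2$ and Fatou's lemma on $\rn\smallsetminus\o$ then give $\|u^\pm\|^2\le\irn Q(x)|u^\pm|^p$, so there exist $t_{u^\pm}\in(0,1]$ with $t_{u^\pm}u^\pm\in\cN_0$ and $\what u:=t_{u^+}u^++t_{u^-}u^-\in\cE_0$. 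Since $\nabla u^+$ and $\nabla u^-$ have disjoint supports, $\|\what u\|^2=t_{u^+}^2\|u^+\|^2+t_{u^-}^2\|u^-\|^2$, and the squeeze
$$\tfrac{2p}{p-2}\what c_0\le\|\what u\|^2\le\|u^+\|^2+\|u^-\|^2=\|u\|^2\le\liminf_k\|u_k\|^2\le\lim_k\|u_k\|_{\eps_k}^2=\tfrac{2p}{p-2}\what c_0$$
forces all inequalities to be equalities. In particular $t_{u^+}=t_{u^-}=1$, so $u^\pm\in\cN_0$ and $u\in\cE_0$ with $J_0(u)=\what c_0$; moreover $\|u_k\|^2\to\|u\|^2$, giving strong convergence in $D^{1,2}(\rn)$, and $\eps_k^2\irn|u_k|^2\to 0$.

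The step I expect to be most delicate is upgrading to strong convergence in $L^p$, which hinges on the energy splitting correctly between the two nodal parts. I would show $J_{\eps_k}(u_k^\pm)\to J_0(u^\pm)$ individually: weak lower semicontinuity gives $\liminf_k J_{\eps_k}(u_k^\pm)\ge J_0(u^\pm)$, while $J_{\eps_k}(u_k^+)+J_{\eps_k}(u_k^-)\to\what c_0=J_0(u^+)+J_0(u^-)$, and two lower bounds whose sum attains the limit of the sum must each be attained. Hence $\|u_k^\pm\|_{\eps_k}^2\to\|u^\pm\|^2$, and with $\|u_k^\pm\|^2\le\|u_k^\pm\|_{\eps_k}^2$ and lower semicontinuity this yields $\|u_k^\pm\|^2\to\|u^\pm\|^2$. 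Finally, the Nehari identity gives $\irn Q(x)|u_k^\pm|^p=\|u_k^\pm\|_{\eps_k}^2\to\|u^\pm\|^2=\irn Q(x)|u^\pm|^p$; together with $\io|u_k^\pm|^p\to\io|u^\pm|^p$ and Fatou on $\rn\smallsetminus\o$ this forces $\int_{\rn\smallsetminus\o}|u_k^\pm|^p\to\int_{\rn\smallsetminus\o}|u^\pm|^p$, so $|u_k^\pm|_p\to|u^\pm|_p$. Combined with a.e. convergence (Brezis--Lieb), this gives $u_k^\pm\to u^\pm$ strongly in $L^p(\rn)$, hence $u_k\to u$ strongly in $E$, and $u$ is a least energy nodal solution of~\eqref{eq:limit_problem}.
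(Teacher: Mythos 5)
Your argument is correct and follows essentially the same route as the paper's proof: boundedness in $E$ via Lemma~\ref{lem:wc_eps to wc_0}, nontriviality of $u^\pm$ via Lemma~\ref{lem:c_eps to c_0} and Fatou, projections $t_\pm u^\pm\in\cN_0$ with $t_\pm\in(0,1]$, and the identical norm squeeze forcing $t_\pm=1$, $u\in\cE_0$ and strong $D^{1,2}$-convergence. The only deviation is minor: for the final $L^p$-upgrade the paper applies the Nehari-identity/Fatou mechanism to the whole function $u_k$ at once, using $\irn Q(x)|u_k|^p=\|u_k\|_{\eps_k}^2\to\frac{2p}{p-2}\what c_0=\irn Q(x)|u|^p$, whereas you first split the limiting energy between the nodal components (via the sum-of-liminfs argument, which is valid) and run the same mechanism on each part separately --- slightly longer, but equally correct.
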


\begin{proof}
Let $\eps_k\to 0$ and $u_k\in\cE_{\eps_k}$ satisfy $J_{\eps_k}(u_k)=\what c_{\eps_k}$. Arguing as in the proof of Theorem~\ref{thm:positive profile}, using Lemma~\ref{lem:wc_eps to wc_0}, we see that $(u_k)$ is bounded in $E$. So, after passing to a subsequence, $u_k\rh u$ weakly in $E$ and $u_k\to u$ in $L_{loc}^p(\rn)$. Furthermore, as in the proof of Theorem~\ref{thm:nodal existence}, we see that $u_k^\pm\rh u^\pm$ weakly in $E$ and $u_k^\pm\to u^\pm$ in $L_{loc}^p(\rn)$. Using Lemma~\ref{lem:c_eps to c_0} and Fatou's lemma we obtain
\begin{align*} 
\frac{2p}{p-2}c_0&\leq\lim_{k\to\infty}\|u_k^\pm\|_{\eps_k}^2=\lim_{k\to\infty}\irn Q(x)|u_k^\pm|^p \\
&=\lim_{k\to\infty}\io |u_k^\pm|^p - \lim_{k\to\infty}\int_{\rn\smallsetminus\o}|u_k^\pm|^p\leq\io |u^\pm|^p - \int_{\rn\smallsetminus\o}|u^\pm|^p=\irn Q(x)|u^\pm|^p.
\end{align*}
This shows that $u^\pm\in\cU$. As 
$$\|u^\pm\|^2\leq\liminf_{k\to\infty}\|u_k^\pm\|^2\leq\lim_{k\to\infty}\|u_k^\pm\|_{\eps_k}^2\leq\irn Q(x)|u^\pm|^p,$$
there exist $t_\pm\in(0,1]$ such that $\|t_\pm u^\pm\|^2=\irn Q(x)|t_\pm u^\pm|^p$. Then, $w:=t_+u^+ + t_-u^-\in\cE_0$ and, as a consequence,
\begin{align*}
\frac{2p}{p-2}\what c_0 &\leq\|w\|^2=\|t_+u^+\|^2 + \|t_-u^-\|^2\leq\|u^+\|^2 + \|u^-\|^2=\|u\|^2 \\
&\leq\liminf_{k\to\infty}\|u_k\|^2\leq\lim_{k\to\infty}\|u_k\|_{\eps_k}^2=\frac{2p}{p-2}\what c_0.
\end{align*}
This shows that $u_k\to u$ strongly in $D^{1,2}(\rn)$ and that $t_+=1=t_-$. Hence, $u\in\cE_0$ and $\|u\|^2=\irn Q(x)|u|^p=\frac{2p}{p-2}\what c_0$. By Lemma~\ref{lem:wc_eps to wc_0} and Fatou's lemma,
\begin{align*} 
\frac{2p}{p-2}\what c_0 &=\lim_{k\to\infty}\irn Q(x)|u_k|^p =\lim_{k\to\infty}\io |u_k|^p - \lim_{k\to\infty}\int_{\rn\smallsetminus\o}|u_k|^p \\
&\leq\io |u|^p - \int_{\rn\smallsetminus\o}|u|^p=\irn Q(x)|u|^p=\frac{2p}{p-2}\what c_0.
\end{align*}
This shows that $u_k\to u$ strongly in $L^p(\rn)$ and, thus, in $E$. This completes the proof.
\end{proof}
\medskip

\begin{proof}[Proof of Theorem~\ref{thm:main_nodal}]
Since $u(x):=\eps^\frac{2}{p-2}v(\eps x)$ is a solution of~\eqref{eq:Q_problem} if and only if $v$ is a solution of~\eqref{eq:Q_eps_problem}, the first statements of Theorem~\ref{thm:main_nodal} follow from Theorems~\ref{thm:nodal existence} and~\ref{thm:nodal profile}. The proof of~\eqref{eq:concentration} goes as in the proof of Theorem~\ref{thm:main_positive}.
\end{proof}

\section{The symmetries of the limit profiles}\label{s:sec}

\subsection{Radial symmetry of the positive least energy solution}

We recall the notion of Schwarz symmetrization, also called symmetric rearrangement (see \cite[Chapter 1]{k} or \cite{ll}). Let $A$ be a measurable set of finite volume $|A|$ in $\rn$. Its \emph{Schwarz symmetrization} $A^*$ is the open ball in $\rn$ centered at the origin whose volume is the same as the volume of $A$. Let $u:\rn\to\r$ be a nonnegative measurable function that vanishes at infinity, in the sense that its level sets have finite volume, i.e.,
$$\{u>t\}:=\{x\in\rn:u(x)>t\}$$
has finite volume for every $t>0$. The \emph{Schwarz symmetrization} $u^*:\rn\to\r$ of $u$ is defined by symmetrizing its level sets, i.e.,
$$u^*(x):=\int_0^\infty\chi_{\{u>t\}^*}(x)\d t.$$
Here $\chi_A$ stands, as usual, for the characteristic function of $A$. Clearly, $u^*$ is nonnegative, radially symmetric and nonincreasing in the radial direction. Since $\{u>t\}$ is open, $u^*$ is lower semicontinuous and, therefore, measurable. By construction, $u^*$ and $u$ are \emph{equimeasurable}, i.e.,
$$|\{u^*>t\}|=|\{u>t\}|\qquad\text{for all \ }t>0.$$
Then, using Fubini's theorem and the layer-cake decomposition of $u$, given by
$$u(x)=\int_0^\infty\chi_{\{u>t\}}(x)\d t,$$
one easily concludes that $u^*\in L^p(\rn)$ if $u\in L^p(\rn)$ and that their norms in $L^p(\rn)$ are the same, i.e.,
$$|u|_p=|u^*|_p.$$
These and other properties may be found in \cite[Section 3.3]{ll}.

Recall the definitions of $J_0$ and $\cN_0$ given in~\eqref{J0:def} and~\eqref{N0:def}.

\begin{lemma}\label{lem:radially}
Let $w$ be a positive least energy solution of~\eqref{eq:limit_problem} with $\Omega = B_1$. Then $w^*\in\cN_0$ is a least energy solution of~\eqref{eq:limit_problem}, $w^*$ is strictly decreasing in the radial direction, and
$$\int_{B_1}|w|^{p}=\int_{B_1}|w^*|^{p}.$$     
\end{lemma}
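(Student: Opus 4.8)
The plan is to use the Schwarz symmetrization and the rearrangement inequalities to show that passing from $w$ to $w^*$ does not increase the energy, hence $w^*$ is again a least energy solution, and then to extract strict radial monotonicity from the fact that equality must hold throughout.

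First I would recall the two classical rearrangement facts that drive the argument. Since $\Omega=B_1$ is itself a ball centered at the origin, it equals its own Schwarz symmetrization. The $L^p$-norm is preserved under symmetrization, $|w|_p=|w^*|_p$, and the Dirichlet energy cannot increase: the Pólya--Szegő inequality gives $\|w^*\|^2\leq\|w\|^2$. The crucial point that is special to this sign-changing problem is the behavior of the \emph{nonlinear} term $\irn Q(x)|w|^p = \int_{B_1}|w|^p - \int_{\rn\smallsetminus B_1}|w|^p$. I would treat the positive and negative contributions separately. For the first, since $B_1=B_1^*$, the identity $|w|_p=|w^*|_p$ together with equimeasurability and the layer-cake representation of the integral over $B_1$ yields $\int_{B_1}|w^*|^p\geq\int_{B_1}|w|^p$ (the mass of $w^*$ is pushed toward the origin, hence into $B_1$); equivalently $\int_{\rn\smallsetminus B_1}|w^*|^p\leq\int_{\rn\smallsetminus B_1}|w|^p$. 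Combining these, $\irn Q(x)|w^*|^p\geq\irn Q(x)|w|^p>0$, so in particular $w^*\in\cU$ and $w^*$ admits a Nehari projection.

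Next I would assemble these two monotonicities through the variational characterization of $c_0$ encoded in Lemma~\ref{lem:t_u} and Lemma~\ref{lem:c_eps to c_0}. Since $w\in\cN_0$ is a least energy solution, $J_0(w)=c_0$ and the minimax value of $w^*$ over the Nehari scaling is
$$
\frac{p-2}{2p}\left(\frac{\|w^*\|^2}{\big(\irn Q(x)|w^*|^p\big)^{2/p}}\right)^{\frac{p}{p-2}}
\leq
\frac{p-2}{2p}\left(\frac{\|w\|^2}{\big(\irn Q(x)|w|^p\big)^{2/p}}\right)^{\frac{p}{p-2}}
=c_0,
$$
because the numerator decreases (Pólya--Szegő) while the denominator increases (the mass estimate above). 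Projecting $w^*$ radially onto $\cN_0$ via Lemma~\ref{lem:t_u} therefore produces an element of $\cN_0$ with energy $\leq c_0$, so its energy equals $c_0$ and it is a least energy solution. Since the projection is a positive rescaling, $w^*$ itself (after this harmless normalization) is a least energy solution of~\eqref{eq:limit_problem}, and tracing the chain of inequalities backwards forces equality everywhere: in particular $\int_{B_1}|w|^p=\int_{B_1}|w^*|^p$, which is the last displayed identity.

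The main obstacle, and the step requiring the most care, will be upgrading $w^*$ from merely nonincreasing to \emph{strictly} decreasing in the radial direction. The hard part is that strict monotonicity does not follow from the rearrangement inequalities alone; I would instead argue from the equation. Equality in Pólya--Szegő (which we have just established) and the regularity from Lemma~\ref{lem:regularity_limit} let me conclude that $w^*$ is itself a genuine solution satisfying $-\Delta w^*=Q(x)|w^*|^{p-2}w^*$, so on each annular region where $w^*$ is constant the equation would force $Q(x)|w^*|^{p-2}w^*=0$, which is impossible where $w^*>0$; combined with the strong maximum principle and Hopf's lemma applied to the radial profile (using that $w^*>0$ everywhere, as in the positivity argument in Theorem~\ref{thm:positive profile}), this rules out flat pieces and plateaus and yields strict radial decrease. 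I would carry out this final monotonicity step by reducing to the one-dimensional radial ODE and invoking uniqueness together with the sign of $Q$, which is where the interplay between the two regions $B_1$ and $\rn\smallsetminus B_1$ must be handled explicitly.
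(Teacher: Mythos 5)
Your proposal is correct and follows essentially the same route as the paper: Pólya--Szegő plus preservation of the $L^p$-norm, the Hardy--Littlewood-type observation that symmetrization moves mass into $B_1$ (so $\irn Q(x)|w^*|^p\geq\irn Q(x)|w|^p$), projection onto $\cN_0$ with the resulting energy chain forcing equality everywhere (whence $t_*=1$, $w^*\in\cN_0$, and $\int_{B_1}|w|^p=\int_{B_1}|w^*|^p$), and strict radial decrease by ruling out flat annuli against the equation since $w^*>0$ and $Q=\pm1$. One small correction: the fact that $w^*$ solves~\eqref{eq:limit_problem} comes from $w^*$ being a minimizer of $J_0$ on the natural constraint $\cN_0$, not from ``equality in Pólya--Szegő,'' and the appeals to Hopf's lemma and radial ODE uniqueness are unnecessary --- the constancy-on-an-annulus contradiction alone suffices, exactly as in the paper.
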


\begin{proof}Let $w$ be a positive least energy solution of~\eqref{eq:limit_problem} with $\Omega = B_1$. By \cite[Lemma 7.17]{ll} we have that $\|w\|^2\geq \|w^*\|^2$ and, as pointed out above, $|w|_p=|w^*|_p$. We claim that
\begin{align} \label{eq:claim}
\int_{\rn} Q(x) |w|^{p}\leq \int_{\rn} Q(x) |w^*|^{p}.
\end{align}
Let 
\begin{align*}
m_1:=\int_{B_1}|w|^{p},\quad 
m_2:=-\int_{\rn \smallsetminus B_1} |w|^{p},\quad 
m_1^*:=\int_{B_1}|w^*|^{p},\quad 
m_2^*:=-\int_{\rn\smallsetminus B_1} |w^*|^{p}.
\end{align*}
By definition, it is clear that $m_1^*\geq m_1$. As 
\begin{equation}\label{eq:m}
m_1-m_2=|w|_p^p=|w^*|_p^p=m_1^*-m_2^*
\end{equation}
we have that $m_2^*\geq m_2$. This proves~\eqref{eq:claim}. As $w\in\cN_0$ we obtain
$$\int_{\rn} Q(x) |w^*|^{p}\geq\int_{\rn} Q(x) |w|^{p}=\|w\|^2\geq \|w^*\|^2.$$
Arguing as in Lemma~\ref{lem:t_u} we see that there exists $t_*\in(0,1]$ such that $t_*w^*\in\cN_0$. Therefore,
$$c_0\leq J_0(t_*w^*)=\frac{p-2}{2p}\|t_*w^*\|^2\leq\frac{p-2}{2p}\|w^*\|^2\leq\frac{p-2}{2p}\|w\|^2=J_0(w)=c_0.$$
This implies that $t_*=1$ and that $w^*\in\cN_0$ is a least energy solution of~\eqref{eq:limit_problem}. As a consequence,~\eqref{eq:claim} is an equality, i.e., $m_1+m_2=m_1^*+m_2^*$. This, together with~\eqref{eq:m} yields $m_1=m_1^*$. Furthermore, by Lemma~\ref{lem:regularity_limit}, $w^*$ is a strong solution of~\eqref{eq:limit_problem}. This implies that $w^*$ is strictly decreasing in the radial direction. Otherwise, it would be constant in some annulus, contradicting~\eqref{eq:limit_problem}.
\end{proof}
\medskip

\begin{proof}[Proof of Theorem~\ref{thm:symmetries}$(i)$]
Let $w$ be a positive least energy solution of~\eqref{eq:limit_problem} in $B_1$ and $w^*$ be its Schwarz symmetrization. By Lemma~\ref{lem:radially}, it suffices to show that $w=w^*$. As $\|w\|^2=\|w^*\|^2$, we have that $w$ coincides with a translate of $w^*$ \cite{bz}, i.e., $w(x)=w^*(x+\xi)$ for some $\xi\in\rn$. Since $w^*$ is radially symmetric and strictly decreasing in the radial direction we have that
\begin{align*}
\int_{B_1} |w^*|^{p} > \int_{B_1} |w^*(x+\xi)|^{p} \d x\qquad \text{ for every \ }\xi\neq 0.
\end{align*}
Thus, by Lemma~\ref{lem:radially}, $w=w^*$.
\end{proof}

\subsection{Foliated Schwarz symmetry of the least energy nodal solution}\label{fss:sec}

Let $\mathbb{S}^{N-1}=\left\{x \in \mathbb{R}^N:|x|=1\right\}$ be the unit sphere. For each $e \in \mathbb{S}^{N-1}$ we consider the halfspace $H(e):=\{x \in$ $\left.\mathbb{R}^N: x \cdot e>0\right\}$, and we denote by $u_e$ the composition of the function $u: \rn \rightarrow \mathbb{R}$ with the reflection $\sigma_e$ with respect to the hyperplane $\partial H(e)$, that is,
$$u_e: \rn\rightarrow \mathbb{R} \quad \text { is given by } \quad u_e(x):=u(\sigma_e(x)),\qquad\text{where \ } \sigma_e(x)=x-2(x \cdot e) e.$$
The \emph{polarization} $u_H$ of $u: \rn \rightarrow \mathbb{R}$ with respect to the halfspace $H=H(e)$ is defined by
$$u_H:= \begin{cases}\max \left\{u, u_e\right\} & \text { in } \overline{H}, \\ \min \left\{u, u_e\right\} & \text { in } \rn \smallsetminus H.\end{cases}$$

Recall the definition of $\cE_0$ given in~\eqref{E0:def}.

\begin{lemma} \label{lem:u_H}
Let $\o$ be radially symmetric. Then, for any halfspace $H=H(e)$, we have that $u_H\in\cN_0$ if $u\in\cN_0$, $u_H\in\cE_0$ if $u\in\cE_0$, and $J_0(u)=J_0(u_H)$. In particular, if $u$ is a least energy (positive or nodal) solution of the limit problem~\eqref{eq:limit_problem}, then so is $u_H$.
\end{lemma}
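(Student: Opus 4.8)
The plan is to exploit the fundamental rearrangement properties of polarization together with the characterization of $\cN_0$ and $\cE_0$ via the energy expressions already established. First I would recall the two basic facts about polarization that drive everything: for any $u$ and any halfspace $H=H(e)$, the polarization $u_H$ is equimeasurable with $u$, and in particular preserves all $L^p$-norms, so $|u_H|_p = |u|_p$; moreover the Dirichlet energy does not increase under polarization, $\|u_H\|^2 \le \|u\|^2$. These are standard (see \cite{bz,ll}). The extra ingredient specific to our sign-changing nonlinearity is to control the weighted quantity $\irn Q(x)|u|^p$, and this is exactly where the radial symmetry of $\o$ enters.

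\medskip

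The key step is to show
\begin{equation*}
\irn Q(x)|u_H|^p = \irn Q(x)|u|^p.
\end{equation*}
Here I would split the integral as $\io |u|^p - \int_{\rn\smallsetminus\o}|u|^p$ and use that $H$ is a halfspace through the origin while $\o$ is radially symmetric, so both $\o$ and $\rn\smallsetminus\o$ are invariant under the reflection $\sigma_e$. Polarization rearranges the values of $u$ only by swapping $u(x)$ and $u(\sigma_e x)$ between the pair of reflected points $\{x,\sigma_e x\}$; since the weight $Q$ is constant ($=1$ or $=-1$) on each such reflected pair (because membership in $\o$ is $\sigma_e$-invariant), the sum $Q(x)|u(x)|^p + Q(\sigma_e x)|u(\sigma_e x)|^p$ is unchanged by the swap. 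Integrating over a fundamental domain for $\sigma_e$ and doubling gives the claimed equality. Equivalently, $\int_\o |u_H|^p = \int_\o |u|^p$ and $\int_{\rn\smallsetminus\o}|u_H|^p = \int_{\rn\smallsetminus\o}|u|^p$ separately, because each of $\o$ and its complement is a symmetric (hence polarization-compatible) set.

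\medskip

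With these facts in hand the membership statements are almost immediate from the energy identities in the excerpt. If $u\in\cN_0$ then $\|u\|^2 = \irn Q(x)|u|^p$; combining $\|u_H\|^2 \le \|u\|^2$ with the equality $\irn Q(x)|u_H|^p = \irn Q(x)|u|^p = \|u\|^2$ shows $F_0(u_H):=\|u_H\|^2 - \irn Q(x)|u_H|^p \le 0$, and since $u_H\in\cU$ (the weighted integral is positive) there is a unique projection $t_{u_H}u_H\in\cN_0$ with $t_{u_H}\in(0,1]$, giving $J_0(t_{u_H}u_H)\le J_0(u) = c_0$; as $c_0$ is the infimum this forces $t_{u_H}=1$, hence $u_H\in\cN_0$ and $\|u_H\|^2=\|u\|^2$, so $J_0(u_H)=J_0(u)$. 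For $u\in\cE_0$ I would apply the same argument to $u^+$ and $u^-$ separately, using the standard identities $(u_H)^+ = (u^+)_H$ and $(u_H)^- = (u^-)_H$ (polarization commutes with taking positive and negative parts), so that $u^\pm\in\cN_0$ yields $(u^\pm)_H\in\cN_0$, whence $u_H = (u^+)_H + (u^-)_H \in \cE_0$ with equal energy. The final clause is then just the observation that a least-energy (positive or nodal) solution $u$ satisfies $J_0(u)=c_0$ (resp.\ $\what c_0$) and $u_H\in\cN_0$ (resp.\ $\cE_0$) with the same energy, so $u_H$ is again a minimizer and, being a critical point of $J_0$, a solution of \eqref{eq:limit_problem}. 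The main obstacle is the weighted-integral invariance in the second step; once the $\sigma_e$-invariance of $\o$ is used to pin down that $Q$ is constant on reflected pairs, the rest is bookkeeping with the already-established Nehari projection lemmas.
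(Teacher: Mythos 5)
Your treatment of the weighted integral is exactly right and coincides with the paper's: since $\sigma_e$ is a reflection through a hyperplane containing the origin and $\o$ is radially symmetric, both $\o$ and $\rn\smallsetminus\o$ are $\sigma_e$-invariant, so polarization merely swaps values within reflected pairs on which $Q$ is constant, giving $\int_\o|u_H|^p=\int_\o|u|^p$ and $\int_{\rn\smallsetminus\o}|u_H|^p=\int_{\rn\smallsetminus\o}|u|^p$ (the paper quotes \cite[Lemma 3.1]{we} for this). The gap is in the gradient term. You use only the inequality $\|u_H\|^2\le\|u\|^2$ and then repair it via the Nehari projection plus minimality, writing $J_0(t_{u_H}u_H)\le J_0(u)=c_0$. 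That step assumes $u$ is a least energy element, so your argument proves only the ``in particular'' clause (which is, admittedly, all that is used later in the paper); it does not prove the lemma as stated, which asserts $u_H\in\cN_0$ for \emph{every} $u\in\cN_0$, $u_H\in\cE_0$ for \emph{every} $u\in\cE_0$, and the unconditional identity $J_0(u)=J_0(u_H)$. For a non-minimizing $u\in\cN_0$ your scheme only yields $t_{u_H}u_H\in\cN_0$ for some $t_{u_H}\in(0,1]$ with $J_0(t_{u_H}u_H)\le J_0(u)$, and nothing forces $t_{u_H}=1$.

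The missing ingredient is that polarization preserves the Dirichlet integral \emph{exactly}: $\|u_H\|^2=\|u\|^2$ and likewise $\|u_H^\pm\|^2=\|u^\pm\|^2$. This is \cite[Corollary 3.2]{we}, which the paper cites; it follows from the pointwise identity $|\nabla\max(u,u_e)|^2+|\nabla\min(u,u_e)|^2=|\nabla u|^2+|\nabla u_e|^2$ integrated over $H$, together with the fact that $\min(u,u_e)\circ\sigma_e=\min(u,u_e)$, so the integral over $\rn\smallsetminus H$ reflects onto $H$. With this equality, combined with your weighted-integral identity and $(u^\pm)_H=(u_H)^\pm$, the conclusion is immediate for all $u$ --- both defining quantities of $\cN_0$ (and of $\cE_0$, termwise in $u^\pm$) and the energy $J_0$ are invariant --- and the projection/minimality detour becomes unnecessary; this direct verification is precisely the paper's proof. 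A side remark on references: \cite{bz} (Brothers--Ziemer) concerns equality cases for Schwarz symmetrization, not polarization; the relevant source for the polarization facts you invoke is \cite{we} (or Brock--Solynin).
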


\begin{proof}
Observe that, for any $u:\rn\to\r$, we have that $(u^\pm)_H=(u_H)^\pm$ and that $(u\chi_A)_H=u_H\chi_A$ for any radially symmetric set $A$. From \cite[Corollary 3.2]{we} we have that
$$\|u\|^2=\|u_H\|^2\qquad\text{and}\qquad\|u^\pm\|^2=\|u_H^\pm\|^2$$
and, from \cite[Lemma 3.1]{we}, we derive
$$\io |u^\pm|^p = \io |u_H^\pm|^p\qquad\text{and}\qquad\int_{\rn\smallsetminus\o} |u^\pm|^p = \int_{\rn\smallsetminus\o} |u_H^\pm|^p$$
and the similar statements with $u^\pm$ replaced by $u$. Therefore,
$$\irn Q(x)|u|^p=\irn Q(x)|u_H|^p\qquad\text{and}\qquad\irn Q(x)|u^\pm|^p=\irn Q(x)|u_H^\pm|^p.$$
As a consequence, $u_H\in\cN_0$ if $u\in\cN_0$, $u_H\in\cE_0$ if $u\in\cE_0$, and $J_0(u)=J_0(u_H)$, as claimed.
\end{proof}

A function $u \in \cC^0(D)$ is said to be \emph{foliated Schwarz symmetric with respect to} $e^* \in \mathbb{S}^{N-1}$ if $u$ is axially symmetric with respect to the axis $\mathbb{R} e^*:=\{te^*:t\in \r\}$ (i.e., it is invariant under rotations around this axis) and nonincreasing in the polar angle $\theta:=\arccos \left(\frac{x}{|x|} \cdot e^*\right) \in[0, \pi]$.

We use the following characterization of foliated Schwarz symmetry.

\begin{lemma} \label{crit}
Let $\o$ be a radially symmetric open set in $\rn$. There exists $e^* \in \mathbb{S}^{N-1}$ such that $u \in \cC^0(\overline{\o})$ is foliated Schwarz symmetric with respect to $e^*$ if and only if for every $e \in \mathbb{S}^{N-1}$ either
\begin{align*}
u \geqslant u_e \quad \text {in } \o\cap H(e) \qquad \text { or } \qquad u \leqslant u_e \quad \text {in } \o\cap H(e). 
\end{align*}
\end{lemma}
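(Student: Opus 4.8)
The plan is to prove both implications separately; the forward one is a direct geometric computation, while the reverse one carries essentially all the difficulty.

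For the direction ($\Rightarrow$): assume $u$ is foliated Schwarz symmetric with respect to $e^*$, so that $u(x)=g(|x|,\theta(x))$ with $\theta(x)=\arccos(\frac{x}{|x|}\cdot e^*)$ and $g(r,\cdot)$ nonincreasing. Fix $e\in\mathbb{S}^{N-1}$. Since $|\sigma_e(x)|=|x|$, comparing $u(x)$ with $u_e(x)=u(\sigma_e(x))$ reduces to comparing $\theta(x)$ with $\theta(\sigma_e(x))$, i.e. $x\cdot e^*$ with $\sigma_e(x)\cdot e^*$. A one-line computation gives $x\cdot e^*-\sigma_e(x)\cdot e^*=2(x\cdot e)(e\cdot e^*)$. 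For $x\in H(e)$ we have $x\cdot e>0$, so the sign of this difference equals the sign of $e\cdot e^*$. Hence if $e\cdot e^*\ge0$ then $\theta(x)\le\theta(\sigma_e(x))$ and monotonicity of $g$ yields $u\ge u_e$ in $\o\cap H(e)$, whereas if $e\cdot e^*\le0$ then $u\le u_e$ in $\o\cap H(e)$. This gives the stated alternative.

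For the direction ($\Leftarrow$): introduce $\cA:=\{e\in\mathbb{S}^{N-1}:u\ge u_e\text{ in }\o\cap H(e)\}$. Since $\o$ is radially symmetric, $\sigma_e$ is an isometry fixing the origin with $\sigma_e(\o)=\o$ and $\sigma_e(\o\cap H(e))=\o\cap H(-e)$; together with $u_{-e}=u_e$ this shows that $u\le u_e$ in $\o\cap H(e)$ is equivalent to $-e\in\cA$. Thus the hypothesis says exactly $\cA\cup(-\cA)=\mathbb{S}^{N-1}$. Moreover $\cA$ is closed: if $e_n\to e$ with $e_n\in\cA$ and $x\in\o\cap H(e)$, then $x\in\o\cap H(e_n)$ for large $n$ (because $x\cdot e_n\to x\cdot e>0$), so $u(x)\ge u(\sigma_{e_n}(x))\to u(\sigma_e(x))$ by continuity of $u$ and of $e\mapsto\sigma_e$, giving $e\in\cA$. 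The goal is therefore reduced to producing a single $e^*$ with $\{e:e\cdot e^*\ge0\}\subseteq\cA$.

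Granting such an $e^*$, foliated Schwarz symmetry follows. For $e\perp e^*$ both $e$ and $-e$ lie in $\cA$, which forces $u=u_e$ in $\o$; since each such $\sigma_e$ fixes $e^*$ and these reflections generate all rotations about the axis $\mathbb{R}e^*$, $u$ is axially symmetric. For $e\cdot e^*>0$ the inequality $u\ge u_e$ compares two points of equal radius symmetric across $\partial H(e)$, and choosing $e$ to bisect two polar angles $\theta_1<\theta_2$ with bisecting direction toward the pole yields $g(r,\theta_1)\ge g(r,\theta_2)$, i.e. monotonicity in $\theta$. The main obstacle is exactly the production of $e^*$, equivalently showing that a closed set $\cA$ with $\cA\cup(-\cA)=\mathbb{S}^{N-1}$ contains a closed hemisphere. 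This is false for an arbitrary such set and genuinely uses the polarization structure: the decisive point is that $\cA$ is geodesically convex, which I would derive from the interaction of polarizations with rotations (a composition $\sigma_{e_2}\sigma_{e_1}$ is a rotation in $\mathrm{span}\{e_1,e_2\}$, so the comparison inequalities for $e_1,e_2\in\cA$ propagate along the connecting geodesic). A closed, geodesically convex subset of $\mathbb{S}^{N-1}$ whose union with its antipode is the whole sphere must be a closed hemisphere or larger, and its center serves as $e^*$. Since this $e^*$ is extracted from the behavior of $u$ on all of $\o$ at once, the same axis works at every radius and no separate gluing across spheres is needed.
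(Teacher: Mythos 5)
Your forward implication, the reformulation of the hypothesis (closedness of $\cA:=\{e\in\s^{N-1}: u\ge u_e \text{ in } \o\cap H(e)\}$, the equivalence of $u\le u_e$ in $\o\cap H(e)$ with $-e\in\cA$, hence the hypothesis reading $\cA\cup(-\cA)=\s^{N-1}$), and the deduction of axial symmetry and angular monotonicity once a closed hemisphere sits inside $\cA$ are all correct, and you rightly identify that everything hinges on producing that hemisphere. But exactly there the proposal has a genuine gap: the claimed geodesic convexity of $\cA$, ``derived from the interaction of polarizations with rotations,'' is not a valid step, because the propagation principle you invoke is false. Concretely, on the annulus $\o=\{1<|x|<2\}\subset\r^2$ take $u(x)=\cos(2\theta)$ with $\theta$ the polar angle. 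Then $u\equiv u_{e_1}$ and $u\equiv u_{e_2}$ for $e_1=(0,1)$, $e_2=(1,0)$, so $e_1,e_2\in\cA$, and $\sigma_{e_2}\circ\sigma_{e_1}$ is indeed a rotation (by $\pi$) leaving $u$ invariant; yet for the geodesic midpoint $e=\tfrac{1}{\sqrt2}(1,1)$ one computes $u_e=-u$, and $\cos(2\theta)$ changes sign on $\o\cap H(e)$, so neither $u\ge u_e$ nor $u\le u_e$ holds there and $e\notin\cA\cup(-\cA)$. Thus comparison inequalities at $e_1,e_2$ do not propagate along the connecting geodesic: invariance under the composed rotation says nothing about intermediate polarizations. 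This $u$ of course violates the covering hypothesis, so it is no counterexample to the lemma; but your convexity argument nowhere uses that hypothesis, so the example defeats it as stated, and any convexity statement for $\cA$ that does use $\cA\cup(-\cA)=\s^{N-1}$ is essentially equivalent to the lemma itself. (The auxiliary claim that a closed geodesically convex set whose union with its antipode covers $\s^{N-1}$ contains a closed hemisphere is also left unproven, and ``geodesically convex'' is ambiguous for sets containing antipodal pairs.)

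For comparison: the paper does not reprove the lemma at all; its proof is a citation to Brock \cite{b} (see also \cite[Proposition 3.2]{sw} and \cite{we}) together with the observation that the disconnected case follows componentwise. The standard argument in those references repairs your missing step with a moment trick rather than convexity: the change of variables $x\mapsto\sigma_e(x)$ gives, for every $e\in\s^{N-1}$, the identity $\int_\o u(x)\,(x\cdot e)\d x=\int_{\o\cap H(e)}\bigl(u(x)-u_e(x)\bigr)(x\cdot e)\d x$, whose integrand has the sign of $u-u_e$ on $H(e)$. Set $M:=\int_\o u(x)\,x\d x\in\rn$. If $M\neq 0$, put $e^*:=M/|M|$; for any $e$ with $e\cdot e^*>0$ the alternative $u\le u_e$ in $\o\cap H(e)$ would force $M\cdot e\le 0$, a contradiction, so the dichotomy yields $u\ge u_e$, i.e.\ the open hemisphere $\{e:e\cdot e^*>0\}\subset\cA$, and your closedness argument upgrades this to the closed hemisphere. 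If $M=0$, the identity together with the dichotomy forces $u\equiv u_e$ in $\o\cap H(e)$ for every $e$ (by continuity, since $x\cdot e>0$ there), hence $u$ is radial and any $e^*$ works. Substituting this for your convexity step, the rest of your write-up goes through and gives a complete, self-contained proof of the lemma.
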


\begin{proof}
The proof is given in \cite{b} (see also \cite[Proposition 3.2]{sw} and \cite{we}).  This proof only takes into account the case when $\Omega$ is connected, but the general case easily follows since every component satisfies the same inequalities which completely determine the symmetry axis and the angular monotonicity.
\end{proof}
\medskip

\begin{proof}[Proof of Theorem~\ref{thm:symmetries}$(ii)$]
First, we adapt the strategy from \cite[Lemma 2.5]{bww} to our setting. 
Let $\o$ be radially symmetric and let $u$ be a least energy (positive or nodal) solution of the limit problem~\eqref{eq:limit_problem}. Let  $e\in \mathbb{S}^{N-1}$ and set $H:=H(e)$. By Lemma~\ref{lem:u_H}, $u_H$ is a least energy (positive or nodal) solution of~\eqref{eq:limit_problem}. Observe that $\left|u-u_e\right|=2 u_H-\left(u+u_e\right)$ in $H$ and $-\left|u-u_e\right|=2 u_H-\left(u+u_e\right)$ in $\rn\smallsetminus H$. By Lemma~\ref{lem:regularity_limit}, $u,u_H\in W^{2,s}_{loc}(\rn)\cap \cC^{1,\alpha}_{loc}(\rn)$ for all $s\in[1,\infty)$ and $\alpha\in(0,1)$. Then $\left|u-u_e\right| \in W^{2,N}_{loc}(\rn)$ and
$$
\begin{aligned}
-\Delta|u- u_e|
&=2 Q(x)|u_H|^{p-2}u_H-\left[Q(x)|u|^{p-2}u+Q(x)|u_e|^{p-2}u_e\right] \\
& =Q(x)\left([|u_H|^{p-2}u_H-|u|^{p-2}u]+[|u_H|^{p-2}u_H-|u_e|^{p-2}u_e]\right)\qquad \text{ in \ }H:=H(e).
\end{aligned}
$$
Hence,
\begin{align}\label{uue}
-\Delta|u- u_e|\leq 0 \text{ \ in \ }H(e)\cap(\rn\smallsetminus\Omega)\quad \text{and}\quad
-\Delta|u- u_e|\geq 0 \text{ \ in \ }H(e)\cap \Omega\quad\text{for every \ }e\in\s^{N-1}.
\end{align}
Clearly, $|u- u_e|=0$ on $\partial H(e)$. By the maximum principle for strong solutions \cite[Theorem 9.6]{gt}, either $u \equiv u_e$ in $H(e)\cap \Omega$ or $\left|u-u_e\right|>0$ in $H(e)\cap \Omega$. Since $e$ is arbitrary, Lemma~\ref{crit} states that there exists $e^* \in \mathbb{S}^{N-1}$ such that $u$ is foliated Schwarz symmetric in $\o$ with respect to $e^*$. In particular, it is axially symmetric in $\o$ with respect to $\r e^*$. Therefore,
\begin{align*}
|u-u_{e}|=0 \text{ \ in \ } \big(H(e)\cap \Omega\big)\cup \partial H(e)\quad\text{for every \ }e\in\s^{N-1}\text{ \ with \ }e\cdot e^*=0.
\end{align*}
From~\eqref{uue} and the maximum principle, we obtain that $|u-u_{e}|\leq 0$ in $\rn\smallsetminus\Omega$, that is, $u\equiv u_{e}$ in $\rn$ for every $e\in\s^{N-1}$ such that $e\cdot e^*=0$. It follows that $u$ is axially symmetric with respect to $\mathbb{R} e^*$ in the whole of $\rn$.  As a consequence, $u$ only depends on two variables. Abusing notation, we write
\begin{align*}
 u(x)=u(r,\theta)\qquad \text{ for }x\in \rn,\quad r=|x|\in[0,\infty),\quad \text{ and }\quad
 \theta =\arccos\left(\frac{x}{|x|} \cdot e^*\right) \in[0, \pi].
\end{align*}
As $\o$ is radially symmetric there is an open bounded subset $\cO$ of $[0,\infty)$ such that $x\in\o$ if and only if $r=|x|\in\cO$. Let
\begin{align*}
 v(r,\theta):= \partial_\theta u(r,\theta)\qquad \text{ for }r>0\quad \text{ and }\quad \theta\in[0,\pi].
\end{align*}
Since $u$ is foliated Schwarz symmetric in $\o$, the function $\theta\mapsto u(r,\theta)$ is nonincreasing for each $r\in\overline{\cO}$; in particular
\begin{align*}
v(r,\theta)\leq 0\qquad \text{for all \ }r\in\partial\cO\text{ \ and \ }\theta\in[0,\pi].
\end{align*}
Furthermore, since $u$ is axially symmetric in $\rn$ with respect to $\r e^*$, 
\begin{align*}
v(r,0)=v(r,\pi)=0\qquad \text{for \ }r>0
\end{align*}
and, as $u$ is $\cC^1$ at the origin (in fact, smooth, by Lemma \ref{lem:regularity_limit}), we have, by continuity, that
\begin{align*}
v(0,\theta):=\lim_{r\to 0}v(r,\theta)=\lim_{r\to 0}v(r,0)=0\qquad \text{for \ }\theta\in (0,\pi).
\end{align*}
(This is only relevant if $0\not\in \Omega$).

Writing the Laplacian in hyperspherical coordinates (see, for instance, \cite[eq. (2.10)]{cs}) and using that $u$ is axially symmetric, we have that
\begin{align*}
 -\Delta u(x)
 =Lu(r,\theta):=
 -\frac{1}{r^{N-1}}\partial_r\left(
 r^{N-1}\partial_{r}u(r,\theta)
 \right)
 -\frac{1}{r^2 \sin^{N-2}\theta}\partial_{\theta}\left( \sin^{N-2} \theta\ \partial_{\theta}u(r,\theta)\right).
\end{align*}
Differentiating the above equality with respect to $\theta$ and interchanging derivatives, we get
\begin{align*}
 L v(r,\theta)=
 -\frac{1}{r^{N-1}}\partial_r\left(
 r^{N-1}\partial_{r}v(r,\theta)
 \right)
 -\partial_\theta\left(\frac{1}{r^2 \sin^{N-2}\theta}\partial_{\theta}\left( \sin^{N-2} \theta\ v(r,\theta)\right)\right).
\end{align*}
Let
\begin{align*}
 U:=\{(r,\theta)\subset(0,\infty)\times(0,\pi): r\notin\overline{\cO}\}.
\end{align*}
Since $u$ solves~\eqref{eq:limit_problem}, then, by Lemma~\ref{lem:regularity_limit}, $v$ is a classical solution of
\begin{align*}
 L v +(p-1)|u|^{p-2}v=0\quad \text{in \ } U,\qquad
 v\leq 0\quad \text{on \ }\partial U,\qquad \lim_{r\to \infty}v(r,\cdot)=0.
\end{align*}
We claim that $v\leq 0$ in $U$. By contradiction, assume that $v(r_0,\theta_0)=\max_{\overline{U}} v>0$ for some $(r_0,\theta_0)\in U$. Then, $\partial_{rr}u(r_0,\theta_0)\leq 0$, \ $\partial_{r}u(r_0,\theta_0)=\partial_{\theta}u(r_0,\theta_0)=0$, \ $\partial_{\theta \theta}u(r_0,\theta_0)\leq 0,$ and
\begin{align*}
 0&=L v(r_0,\theta_0) +(p-1)|u(r_0,\theta_0)|^{p-2}v(r_0,\theta_0)\\
 &= -\partial_{rr}v(r_0,\theta_0)
 -\partial_\theta\left(\frac{1}{r^2 \sin^{N-2}\theta}\partial_{\theta}\left( \sin^{N-2} \theta\ v(r_0,\theta_0)\right)\right)+(p-1)|u(r_0,\theta_0)|^{p-2}v(r_0,\theta_0)\\
  &= -\partial_{rr}v(r_0,\theta_0)
 -
 \frac{\partial_{\theta \theta}v(r_0,\theta_0)}{r^2}+\frac{(N-2) \csc ^2(\theta_0)}{r^2}v(r_0,\theta_0)
 +(p-1)|u(r_0,\theta_0)|^{p-2}v(r_0,\theta_0)>0,
\end{align*}
where we used that
\begin{align*}
 -\partial_\theta\left(\frac{1}{\sin^{N-2}\theta}\right)\partial_\theta (\sin^{N-2}\theta)
 -\frac{1}{\sin^{N-2}\theta}\partial_{\theta\theta} (\sin^{N-2}\theta)
 =(N-2) \csc ^2(\theta)>0\quad \text{ for }\theta\in(0,\pi).
\end{align*}
We have reached a contradiction. This shows that $v\leq 0$ in $U$. Hence, $\theta\mapsto u(r,\theta)$ is also nonincreasing for every $r\in[0,\infty)\smallsetminus\overline{\cO}$. Therefore, $u$ is foliated Schwarz symmetric in $\rn$, as claimed.
\end{proof}

\begin{remark}
\emph{We note that the proof of radial symmetry and of foliated Schwarz symmetry can be easily adapted to yield the same symmetry results for the least energy positive and nodal solutions of~\eqref{eq:Q_eps_problem}.}
\end{remark}

\section{Decay at infinity}\label{d:sec}

In this section we prove Theorem~\ref{thm:decay}. We split the proof into four lemmas.

\begin{lemma}\label{lem:upper_bound_positive}
Let $w$ be a positive solution of~\eqref{eq:limit_problem}. Then there exists $C>0$ such that
\begin{align*}
0<w(x)\leq C |x|^{2-N}\qquad \text{for all \ }x\in \rn.
\end{align*}
\end{lemma}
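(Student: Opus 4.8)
The goal is an upper bound $w(x)\le C|x|^{2-N}$ for a positive solution of~\eqref{eq:limit_problem}, valid in all of $\rn$ with $N\ge 3$. The plan is to exploit that, outside the bounded set $\o$, the equation reads $-\Delta w=-|w|^{p-2}w\le 0$, so $w$ is subharmonic-free; more precisely $-\Delta w\le 0$ fails but $\Delta w=|w|^{p-2}w\ge 0$ holds, meaning $w$ is \emph{subharmonic} on $\rn\smallsetminus\overline{\o}$. The natural comparison function is the fundamental-solution-type barrier $\Phi(x):=A|x|^{2-N}$, which is harmonic away from the origin and decays like the desired rate.

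First I would fix $\vr>0$ with $\o\subset B_\vr$ and work on the exterior domain $\rn\smallsetminus B_\vr$, where $w$ solves $\Delta w=|w|^{p-2}w\ge 0$, i.e.\ $w$ is subharmonic there. By Lemma~\ref{lem:regularity_limit}, $w\in\cC^{1,\alpha}_{loc}(\rn)$ and $w\in E=D^{1,2}(\rn)\cap L^p(\rn)$, so in particular $w\in L^{2^*}(\rn)$ and $w(x)\to 0$ along most rays; I would first record that a positive $D^{1,2}$-subharmonic function on an exterior domain tends to $0$ at infinity. Since $w>0$ is continuous and bounded on the compact sphere $\partial B_\vr$, set $M:=\max_{\partial B_\vr}w$ and choose $A$ so that the barrier $\Phi(x)=A|x|^{2-N}$ satisfies $\Phi\ge M$ on $\partial B_\vr$, namely $A:=M\vr^{N-2}$. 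The comparison step is then the heart of the argument: on the exterior region $\Phi$ is harmonic (hence superharmonic, $-\Delta\Phi=0$) while $w$ is subharmonic, $-\Delta w=-|w|^{p-2}w\le 0$, so $-\Delta(w-\Phi)\le 0$ on $\rn\smallsetminus \overline{B_\vr}$; combined with $w-\Phi\le 0$ on $\partial B_\vr$ and $w-\Phi\to 0$ at infinity, the maximum principle for the exterior domain gives $w\le\Phi$ on $\rn\smallsetminus B_\vr$. This yields $w(x)\le M\vr^{N-2}|x|^{2-N}$ for $|x|\ge\vr$, which is the claimed estimate outside $B_\vr$.

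To finish, inside the bounded ball $\overline{B_\vr}$ the function $w$ is continuous, hence bounded by some constant $M'$, and since $|x|^{2-N}\ge\vr^{2-N}$ there, one has $w(x)\le M'\le M'\vr^{N-2}|x|^{2-N}$; taking $C:=\max\{M\vr^{N-2},\,M'\vr^{N-2}\}$ gives the global bound $0<w(x)\le C|x|^{2-N}$ on all of $\rn$. The positivity $w>0$ is already guaranteed (it is part of the hypothesis, or follows from the strong maximum principle as in Lemma~\ref{lem:regularity}).

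\textbf{Main obstacle.} The delicate point is justifying the exterior maximum principle rigorously, since the domain $\rn\smallsetminus B_\vr$ is unbounded and $w$ is only a strong ($W^{2,s}_{loc}$) solution rather than classical. The clean way is to apply the maximum principle on the bounded annuli $B_R\smallsetminus B_\vr$ and let $R\to\infty$: on $\partial B_R$ one has $w-\Phi\le \sup_{\partial B_R}w\le o(1)$ because $w\to 0$ at infinity, so the boundary contribution at $|x|=R$ vanishes in the limit and one recovers $w\le\Phi$ throughout the exterior. Establishing $w(x)\to 0$ as $|x|\to\infty$ for the $D^{1,2}$-subharmonic $w$ (rather than merely $w\in L^{2^*}$) is the one ingredient needing care; it can be obtained either from a subsolution estimate $\sup_{B_1(x)}w\le C\|w\|_{L^{2^*}(B_2(x))}$ (De Giorgi--Nash--Moser, using that $w$ is subharmonic so the zeroth-order term has a favorable sign) together with $\|w\|_{L^{2^*}(B_2(x))}\to 0$ as $|x|\to\infty$, or directly from the subharmonicity and finiteness of the Dirichlet energy.
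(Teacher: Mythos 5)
Your proof is correct and follows essentially the same route as the paper: compare $w$ with the barrier $C|x|^{2-N}$, which is harmonic off the origin, noting that $-\Delta(CF-w)=w^{p-1}\ge 0$ outside a bounded set containing $\o$ and the origin, and conclude by the maximum principle on the exterior domain. The only difference is that you explicitly justify the behavior at infinity (via the annuli $B_R\smallsetminus B_\vr$ and the decay $w\to 0$, obtained from the subsolution estimate and $w\in L^{2^*}(\rn)$), a point the paper's one-line invocation of the maximum principle leaves implicit --- so your write-up is, if anything, more complete; just fix the garbled sentence claiming ``$-\Delta w\le 0$ fails,'' since $-\Delta w=-w^{p-1}\le 0$ is exactly what holds outside $\o$.
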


\begin{proof}
Let $\what\o$ be an open bounded subset of $\rn$ that contains $0$ and $\o$. Let $F(x)=|x|^{2-N}$ for $x\neq 0$.  This is (a multiple of) the fundamental solution for the Laplacian in $\rn$, in particular, $-\Delta F = 0$ in $\rn\smallsetminus\{0\}$. By Lemma~\ref{lem:regularity_limit}, there exists $C>0$ such that $CF-w\geq 0$ on $\partial\what\Omega$. Since $-\Delta (CF-w) = -Q(x) w^{p-1}=w^{p-1}\geq 0$ in $\rn\smallsetminus\what\Omega$, the maximum principle yields $w\leq CF$ in $\rn\smallsetminus\what\Omega$. As $\what\o$ is bounded the result follows.
\end{proof}

\begin{lemma}\label{lem:upper_bound}
Let $w$ be a positive solution of~\eqref{eq:limit_problem} and $u$ be an arbitrary solution of~\eqref{eq:limit_problem}. Then, there exists $C>0$ such that
\begin{align*}
|u(x)|<Cw(x)\qquad \text{for all \ }x\in\rn.
\end{align*}
\end{lemma}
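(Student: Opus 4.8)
The plan is to compare an arbitrary solution $u$ with the positive solution $w$ by establishing a bound of the form $|u| \leq Cw$ through a maximum principle argument. The main idea is that both $|u|$ and $w$ satisfy related elliptic equations, and the decay of $w$ (already controlled by Lemma~\ref{lem:upper_bound_positive}) should dominate the behavior of $|u|$ at infinity. First I would observe that $w > 0$ everywhere in $\rn$ (from the maximum principle, as established for the positive solution), so the ratio $|u|/w$ is well defined and continuous. Since $u \in \cC^{1,\alpha}_{loc}(\rn)$ and $w$ is bounded away from zero on compact sets, the ratio is bounded on any ball $B_\vr$; the crux is to control it as $|x| \to \infty$.

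The key step is to work on the exterior region $\rn \smallsetminus \what\o$, where $\what\o$ is a bounded open set containing $\overline{\o}$. There, $Q(x) = -1$, so $-\Delta w = -w^{p-1}$ and $-\Delta u = -|u|^{p-2}u$, which gives $-\Delta |u| \leq |u|^{p-1}$ in the weak (or distributional) sense via Kato's inequality, while $-\Delta w = -w^{p-1} \leq 0$. I would compare $|u|$ with $C w$ for a suitable constant $C$: on $\partial \what\o$ choose $C$ large enough that $Cw \geq |u|$ (possible by continuity and positivity of $w$), and then examine the function $z := |u| - Cw$ in the exterior domain. The governing inequality for $z$ should yield $-\Delta z \leq |u|^{p-1} + C w^{p-1}$, but since both $|u|$ and $w$ decay like $|x|^{2-N}$ by Lemma~\ref{lem:upper_bound_positive} and Lemma~\ref{lem:upper_bound_positive} applied appropriately, the superlinear right-hand side decays faster than any harmonic comparison function, and a maximum principle on the exterior domain (with $z \leq 0$ on $\partial\what\o$ and $z \to 0$ at infinity) forces $z \leq 0$.

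I expect the main obstacle to be handling the sign-changing nature of $u$ rigorously through $|u|$, since $|u|$ is only Lipschitz and not smooth across the nodal set. The clean way around this is to invoke Kato's inequality, which gives $-\Delta |u| \leq \operatorname{sgn}(u)(-\Delta u) = \operatorname{sgn}(u) Q(x)|u|^{p-2}u = Q(x)|u|^{p-1}$ in the distributional sense; in the exterior region this reads $-\Delta |u| \leq -|u|^{p-1} \leq 0$, so $|u|$ is subharmonic there. Then $|u|$ and $Cw$ are respectively a subsolution and a supersolution of $-\Delta(\cdot) = 0$ on $\rn \smallsetminus \what\o$ (since $-\Delta(Cw) = -Cw^{p-1} \leq 0$ means $Cw$ is also subharmonic, so I must compare against the harmonic majorant instead). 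More carefully, I would use that $-\Delta(Cw - |u|) \geq -Cw^{p-1} + |u|^{p-1}$, and since on the exterior region the decay estimates make both terms of the same order $|x|^{(2-N)(p-1)}$, I can absorb them by choosing $C$ so that the difference $Cw - |u|$ is a supersolution vanishing at infinity and nonnegative on $\partial\what\o$; the maximum principle for exterior domains (using that $(2-N)(p-1) < 2-N$ so the decay is faster than the fundamental solution) then closes the argument and yields $|u(x)| \leq Cw(x)$ on $\rn \smallsetminus \what\o$, which combined with the compact-set bound gives the claim on all of $\rn$.
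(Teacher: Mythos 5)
Your overall architecture (compare with $Cw$ in the exterior region where $Q=-1$, fix the constant on a compact set containing $\overline\o$, close with a maximum principle using decay at infinity) matches the paper's, and you correctly spot the real difficulty: outside $\o$ both $|u|$ and $Cw$ are subharmonic, so a naive sub/supersolution comparison for $-\Delta$ fails. But the step you use to resolve it does not work as stated. For $z:=|u|-Cw$ you get (after fixing a sign slip in your first display) $-\Delta z\leq -|u|^{p-1}+Cw^{p-1}$ in $\rn\smallsetminus\what\o$, and this right-hand side is sign-indefinite; your proposal to ``absorb'' it by noting that both terms decay like $|x|^{(2-N)(p-1)}$ is not a proof, because matching decay \emph{rates} cannot dominate $|u|^{p-1}$ by $Cw^{p-1}$ pointwise without a \emph{lower} bound on $w$ of order $|x|^{2-N}$ --- and no such bound is available at this stage (indeed the paper only proves a lower bound for $p>\frac{2N-2}{N-2}$, and Remark~\ref{rem:lower bound} shows it can genuinely fail for small $p$ when $N=3$). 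There is also a minor circularity: you invoke Lemma~\ref{lem:upper_bound_positive} for the decay of $|u|$, but that lemma is stated for positive solutions; the decay of $|u|$ for sign-changing $u$ is exactly what the present lemma is meant to deliver (though Kato's inequality would let you rerun the proof of Lemma~\ref{lem:upper_bound_positive} for $|u|$, so this part is repairable).

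The missing idea is the monotonicity of $t\mapsto |t|^{p-2}t$, which is how the paper closes the argument. Setting $v:=Cw-u$ with $C\geq 1$ chosen so that $v>0$ on $\overline\o$, one writes $Cw^{p-1}=C^{2-p}(Cw)^{p-1}$ and splits
\begin{align*}
-\Delta v + c(x)\,v=(1-C^{2-p})\,(Cw)^{p-1}\geq 0\qquad\text{in }\rn\smallsetminus\o,
\end{align*}
where $c(x)$ is the difference quotient of $t\mapsto|t|^{p-2}t$ between $Cw(x)$ and $u(x)$, hence $c\geq 0$, and $(1-C^{2-p})\geq 0$ precisely because $C\geq1$ and $p>2$. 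The maximum principle then gives $u\leq Cw$ outside $\o$, and applying the same argument to $-u$ yields $|u|\leq 2Cw$; note this avoids Kato's inequality and the nodal-set regularity worry entirely, since one never takes $|u|$ inside the equation. Your Kato-based variant can be repaired the same way (write $-|u|^{p-1}+Cw^{p-1}=-\tilde c(x)z-(C^{p-1}-C)w^{p-1}$ with $\tilde c\geq 0$), but without this algebraic absorption the maximum-principle step in your proposal does not go through.
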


\begin{proof}
Since $w>0$ in $\rn$, there exists $C\geq 1$ such that $Cw-u>0$ on $\overline\Omega$ (see Lemma \ref{lem:regularity_limit}). Let $v:=Cw-u$, then
\begin{align*}
-\Delta v 
=Q(x)(C w^{p-1} - |u|^{p-2}u)
=Q(x)(C^{2-p} (Cw)^{p-1} - |u|^{p-2}u)
=Q(x)[(C^{2-p}-1)(Cw)^{p-1}+c(x)v]
\end{align*}
where
\begin{align*}
c(x):=
\begin{cases}
\frac{(Cw(x))^{p-1} - |u(x)|^{p-2}u(x)}{Cw(x)-u(x)} & \text{ if }Cw(x)\neq u(x),\\
0 & \text{ if }Cw(x)=u(x).
\end{cases}
\end{align*}
Then, for $x\in \rn\smallsetminus\Omega,$
\begin{align*}
-\Delta v +c(x)v
=(1-C^{2-p})(Cw)^{p-1}.
\end{align*}
Since $C>1$ and $p>2$, we have that $1-C^{2-p}>0$.  Furthermore, $c\geq 0$ in $\rn$.  Then, by the maximum principle, we have that $Cw\geq u$ in $\rn\smallsetminus\Omega$. Hence, $u^+\leq Cw$ in $\rn$, where $u^+:=\max\{u,0\}$. The same argument applied to $-u$ shows that $-u^-=(-u)^+\leq Cw$ in $\rn$ for some $C\geq 1$. Hence, $|u(x)|=u^+(x)-u^-(x)<2Cw(x)$ for all $x\in\rn$.
\end{proof}

\begin{lemma}\label{lem:lower_bound_ball}
Let $\vr>0$ and let $u$ be a positive least energy solution of~\eqref{eq:limit_problem} in $\Omega=B_\vr$ for $p\in (\frac{2N-2}{N-2},\frac{2N}{N-2})$. Then, given $\delta>0$, there exists $C_\delta>0$ such that
\begin{equation*}
u(x)>C_\delta|x|^{2-N-\delta}\qquad\text{in \ }\rn\smallsetminus B_1.
\end{equation*}
\end{lemma}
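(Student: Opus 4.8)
The plan is to bound $u$ from below by an explicit radial subsolution in the exterior region, exploiting that the Serrin condition $p>\frac{2N-2}{N-2}$ is precisely what makes a power slightly weaker than the fundamental solution a subsolution of the exterior equation. First I would restrict to the region $\rn\smallsetminus \overline{B_R}$ with $R:=\max\{\vr,1\}$. There $Q\equiv -1$ and $u>0$, so by Lemma~\ref{lem:regularity_limit} $u$ is a smooth solution of
$$\Delta u = u^{p-1}\qquad\text{in \ }\rn\smallsetminus\overline{B_\vr}\supset \rn\smallsetminus\overline{B_R}.$$

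Next, set $\alpha:=N-2+\delta$ and consider the radial function $\phi(x):=C|x|^{-\alpha}$, with $C>0$ to be chosen. A direct computation gives
$$\Delta\phi = \alpha(\alpha-N+2)\,C|x|^{-\alpha-2}=\alpha\delta\,C|x|^{-\alpha-2}>0.$$
The key algebraic fact is that $p>\frac{2N-2}{N-2}$ is equivalent to $(N-2)(p-2)>2$, whence $\alpha(p-2)>2$ for every $\delta>0$, and therefore $\alpha(p-1)>\alpha+2$. This gives $|x|^{-\alpha(p-1)}\leq |x|^{-\alpha-2}$ for $|x|\geq 1$, so that $\phi^{p-1}=C^{p-1}|x|^{-\alpha(p-1)}\leq C^{p-1}|x|^{-\alpha-2}$. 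Choosing $C$ small enough that $C^{p-2}\leq\alpha\delta$ then yields $\Delta\phi\geq\phi^{p-1}$ in $\rn\smallsetminus\overline{B_R}$, i.e., $\phi$ is a subsolution of the exterior equation.

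Then I would arrange the boundary inequality on $\partial B_R$. Since $u$ is continuous and positive, $m:=\min_{|x|=R}u>0$, so shrinking $C$ further to also satisfy $C\leq mR^\alpha$ guarantees $\phi\leq u$ on $\partial B_R$. Now set $z:=\phi-u$ on $D:=\rn\smallsetminus\overline{B_R}$. Writing $\phi^{p-1}-u^{p-1}=c(x)z$ with $c(x)\geq 0$ (because $t\mapsto t^{p-1}$ is increasing), one obtains $\Delta z-c(x)z\geq 0$ in $D$, with $z\leq 0$ on $\partial D$. By the maximum principle for the operator $\Delta-c$ with $c\geq 0$, $z$ cannot attain a positive maximum, whence $z\leq 0$, that is, $u\geq\phi=C|x|^{2-N-\delta}$ in $\rn\smallsetminus\overline{B_R}$. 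Finally, extending to $\rn\smallsetminus B_1$ is routine: on the compact annulus $\overline{B_R}\smallsetminus B_1$ both $u$ and $|x|^{2-N-\delta}$ are bounded above and $u$ is bounded below by a positive constant, so after one more reduction of $C$ the inequality holds on all of $\rn\smallsetminus B_1$, and a last shrinking makes it strict.

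The main obstacle is that the comparison takes place on the \emph{unbounded} domain $D$, so the maximum principle must be justified at infinity rather than merely invoked. Here I would use the decay $u(x)\leq C'|x|^{2-N}$ from Lemma~\ref{lem:upper_bound_positive}, together with $\phi(x)\to 0$, to ensure $z^+\to 0$ as $|x|\to\infty$; a standard exhaustion by large balls (or adding a vanishing multiple of the harmonic comparison function $|x|^{2-N}$ and letting the coefficient tend to $0$) then rules out a positive supremum of $z$ escaping to infinity and closes the argument.
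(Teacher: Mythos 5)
Your proof is correct, and it takes a genuinely different route from the paper's. The paper first invokes Lemma~\ref{lem:radially} (hence the least-energy hypothesis) to make $u$ radial, and then argues in two stages: for each small $\alpha$ it produces a crossing radius $R_\alpha>\vr$ past which $u(r)<\alpha r^{2-N}$, shows $R_\alpha\to\infty$ as $\alpha\to 0$, and only then compares $u$ with the barrier $z(r)=\alpha R_\alpha^{\delta}r^{2-N-\delta}$ anchored with equality at $r=R_\alpha$, choosing $\alpha$ so small that $\delta R_\alpha^{(N-2)(p-1)-N}>1$; crucially, the nonlinear term in $-\Delta(u-z)$ is there controlled by the previously established upper bound $u<\alpha r^{2-N}$ rather than by a linearization. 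You instead perform a single direct comparison with the global subsolution $\phi=C|x|^{-(N-2+\delta)}$, absorbing the nonlinearity through the smallness condition $C^{p-2}\le\alpha\delta$ and linearizing (with $c\ge 0$) to run the maximum principle; your algebra is right, since $p>\frac{2N-2}{N-2}$ is equivalent to $(N-2)(p-2)>2$, which gives $\alpha(p-1)>\alpha+2$ and hence $\phi^{p-1}\le C^{p-1}|x|^{-\alpha-2}\le\Delta\phi$ for $|x|\ge 1$. Two remarks on what each approach buys. First, your route never uses radial symmetry or the least-energy property --- only positivity, continuity, and the exterior equation $\Delta u=u^{p-1}$ --- so it proves the conclusion for \emph{every} positive solution, and it even survives at the Serrin endpoint $p=\frac{2N-2}{N-2}$, where $\alpha(p-2)=2+\delta(p-2)>2$ still holds for each $\delta>0$; by contrast, the paper's barrier requires $p>\frac{2N-2+\delta}{N-2}$ (its condition \eqref{pdelta}) and the strict inequality $(N-2)(p-1)-N>0$, so it genuinely needs $p$ above the Serrin exponent. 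Second, the obstacle you flag at infinity is real but even easier than you suggest: since $u>0$ one has $z=\phi-u\le\phi\to 0$ as $|x|\to\infty$ without appealing to Lemma~\ref{lem:upper_bound_positive}, and the annulus-exhaustion (or $\eta|x|^{2-N}$-perturbation) argument with $c\ge 0$ closes the comparison; the paper sidesteps this discussion only because, after radialization, its comparisons are effectively one-dimensional with prescribed limits at both ends of the half-line.
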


\begin{proof}
By Lemma~\ref{lem:radially}, $u$ is radially symmetric. For $\alpha\in(0,u(\vr)\vr^{N-2})$ let $v_\alpha:\rn\smallsetminus\{0\}\to \r$ be given by $v_{\alpha}(r):=\alpha r^{2-N}$, where $r=|x|$. Then $u> v_\alpha$ on $\partial B_\vr$.  Let $w_\alpha:=u-v_\alpha$. If $w_\alpha>0$ in $\rn\smallsetminus B_\vr$, the claim follows.  Assume then that there is $R_\alpha>\vr$ so that $w_\alpha(R_\alpha)=0$ and $w_\alpha>0$ in $(\vr,R_\alpha)$.  Since $-\Delta (-w_\alpha)=u^{p-1}>0$ in $(R_\alpha,\infty)$, the maximum principle yields that $w_\alpha<0$ in $(R_\alpha,\infty)$. In other words, for every $\alpha\in(0,u(\vr)\vr^{N-2})$, there is $R_\alpha>\vr$ such that
\begin{align}\label{ueps}
    u(r)<\alpha r^{2-N}\qquad \text{for every \ }r>R_\alpha.
\end{align}
Note that $R_\alpha$ is decreasing in $\alpha$. Furthermore,
\begin{align}\label{Reps}
R_\alpha\to \infty\qquad \text{as \ }\alpha\to 0.
\end{align}
Indeed, if $R_\alpha\to R_*$ as $\alpha\to 0$, this would imply that $u(R_*)=\lim_{\alpha\to 0}u(R_\alpha)\leq \lim_{\alpha\to 0} \alpha R_\alpha^{2-N}=0$, which contradicts the positivity of $u.$

Since $p>\frac{2N-2}{N-2}$, we may assume that the given $\delta>0$ satisfies 
\begin{align}\label{pdelta}
p>\frac{2N-2+\delta}{N-2}    
\end{align}
and, as $(N-2)(p-1)-N>0$, we can use~\eqref{Reps} to find $\alpha\in(0,u(\vr)\vr^{N-2})$ such that
\begin{align}\label{eps}
\delta R_\alpha^{(N-2)(p-1)-N}>1.
\end{align}
For this $\alpha$, let $z:\rn\smallsetminus\{0\}\to \r$ be given by $z(r):= \alpha R_\alpha^{\delta}r^{2-N-\delta}$. Then, by the definition of $R_\alpha$,
\begin{align*}
u(R_\alpha)=\alpha R_\alpha^{2-N}=\alpha R_\alpha^{\delta} R_\alpha^{2-N-\delta}=z(R_\alpha).
\end{align*}
A direct computation shows that
\begin{align*}
\Delta z(r) = \alpha R_\alpha^{\delta}(\delta (N-2 + \delta)) r^{-N-\delta}  \qquad \text{ for }r>0.
\end{align*}
Then, using~\eqref{ueps}, for $r>R_\alpha$  we obtain
\begin{align*}
-\Delta (u-z)(r)
&=-u^{p-1}(r) + \alpha R_\alpha^{\delta} r^{-N-\delta}(\delta (N-2 + \delta))\\
&\geq -(\alpha r^{(2-N)})^{p-1} + \alpha \delta R_\alpha^{\delta} r^{-N-\delta}\\
&=\alpha r^{(p-1)(2-N)}(-\alpha^{p-2}  +  \delta R_\alpha^{\delta} r^{-N-\delta+(p-1)(N-2)}).
\end{align*}
Observe that $-N-\delta+(p-1)(N-2)>0$, by~\eqref{pdelta}. Then, by~\eqref{eps},
\begin{align*}
-\Delta (u-z)(r)
\geq \alpha r^{(p-1)(2-N)}\left(-1+\delta R_\alpha^{-N+(p-1)(N-2)}\right)>0\quad \text{ for }r>R_\alpha.
\end{align*}
By the maximum principle, we have that $u(r)>z(r)=\alpha R_\alpha^{\delta}r^{2-N-\delta}$ for $r>R_\alpha$.  Since $B_{R_\alpha}$ is a compact set, we can find $C>0$ such that $u(x)>C|x|^{2-N-\delta}$ for $|x|>1,$ as claimed.
\end{proof}

\begin{remark} \label{rem:lower bound}
\emph{One cannot expect Lemma~\ref{lem:lower_bound_ball} to be true for every $p\in(2,2^*)$. Indeed, if $\beta\in(N-2,\frac{N}{p})$ and $w\in L^p(\rn)$ is a positive radial function, nonincreasing in the radial direction, then there exists $R>0$ such that
$$w(x)\leq|x|^{-\beta}\qquad\text{if \ }|x|\geq R.$$
Otherwise, there would exist a sequence of positive numbers $r_n\to\infty$ such that $w(x)\geq r_n^{-\beta}$ if $|x|\leq r_n$. Then, as $\beta p<N$, we would have that
$$\infty>\irn|w|^p\geq \int_{|x|\leq r_n}|w|^p\geq \int_{|x|\leq r_n}r_n^{-\beta p}=|B_1|\,r_n^{N-\beta p}\to\infty.$$
This is impossible. Note however that, as $p>2$, the interval $(N-2,\frac{N}{p})$ can only be nonempty if $N=3$.}
\end{remark}

\begin{lemma}\label{lem:domain_comparison}
Let $\Omega\subset B_\vr$ for some $\vr>0$, $w$ be a positive solution of~\eqref{eq:limit_problem} in $\o$ and $u$ be a positive solution of~\eqref{eq:limit_problem} in $B_\vr$. Then there is $C>0$ such that
\begin{align*}
w(x)>C u(x)\qquad \text{for all \ }x\in\rn.
\end{align*}
\end{lemma}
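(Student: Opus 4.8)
The plan is to compare $w$ and $u$ on two regions: the compact ball $\overline{B_\vr}$ and its unbounded complement. The structural point that makes this work is the inclusion $\o\subset B_\vr$: it forces both equations to be \emph{defocusing} throughout $\rn\smallsetminus B_\vr$, i.e. of the common form $-\Delta(\cdot)=-(\cdot)^{p-1}$ there, so the two problems have identical structure on that region and a clean maximum-principle comparison is available. The genuinely awkward zone is the annulus $B_\vr\smallsetminus\o$, where the nonlinearity of $w$ is defocusing while that of $u$ is focusing; I avoid it entirely by absorbing it into the compact part of the argument.

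First I would fix the constant by compactness. By Lemma~\ref{lem:regularity_limit} both $u$ and $w$ are continuous and positive on the compact set $\overline{B_\vr}$, so $m:=\min_{\overline{B_\vr}}w>0$ and $M:=\max_{\overline{B_\vr}}u<\infty$. Taking $C:=\tfrac12\min\{1,m/M\}\in(0,1)$ gives $Cu\leq CM\leq m/2<m\leq w$ on $\overline{B_\vr}$, so that $w>Cu$ there, and in particular on $\partial B_\vr$; the factor $\tfrac12$ also secures the inequality $C<1$, which will be needed below.

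Next I would run the comparison on $\rn\smallsetminus\overline{B_\vr}$. Set $v:=w-Cu$. Since $\o\subset B_\vr$, on this region $-\Delta w=-w^{p-1}$ and $-\Delta u=-u^{p-1}$, whence
\begin{equation*}
-\Delta v=-w^{p-1}+Cu^{p-1}=-\big(w^{p-1}-(Cu)^{p-1}\big)+C\big(1-C^{p-2}\big)u^{p-1}.
\end{equation*}
Writing $w^{p-1}-(Cu)^{p-1}=c(x)\,v$ with
\begin{equation*}
c(x):=\begin{cases}\dfrac{w^{p-1}-(Cu)^{p-1}}{w-Cu}&\text{if }w\neq Cu,\\[2mm](p-1)w^{p-2}&\text{if }w=Cu,\end{cases}
\end{equation*}
the monotonicity of $t\mapsto t^{p-1}$ yields $c\geq 0$, and since $0<C<1$ and $p>2$ one has $C(1-C^{p-2})u^{p-1}>0$. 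Thus $v$ satisfies $-\Delta v+c(x)v>0$ in $\rn\smallsetminus\overline{B_\vr}$ with $c\geq0$, exactly the structure used in the proof of Lemma~\ref{lem:upper_bound}. On $\partial B_\vr$ we have $v>0$ by the previous step, and $v\to0$ at infinity because both $u$ and $w$ are bounded by $C'|x|^{2-N}$ (Lemma~\ref{lem:upper_bound_positive}); hence a nonpositive infimum of $v$ would be attained at some interior $x_0$, where $-\Delta v(x_0)\leq0$ and $c(x_0)v(x_0)\leq0$, contradicting $-\Delta v+cv>0$. The same argument excludes an interior zero, so $v>0$ on $\rn\smallsetminus\overline{B_\vr}$. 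Combining with the compact part gives $w>Cu$ on all of $\rn$.

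The main obstacle is getting the algebraic linearization right so that it simultaneously produces a \emph{strictly positive} forcing term and a \emph{nonnegative} zeroth-order coefficient $c(x)$: the sign requirement $1-C^{p-2}>0$ is precisely what forces $C<1$, and one must check this is compatible with the compact-set lower bound (which a priori might have demanded a large $C$) — the factor $\tfrac12$ reconciles the two. A secondary point is that the maximum principle here is on an unbounded domain, so the decay from Lemma~\ref{lem:upper_bound_positive} is essential to guarantee that a putative negative infimum cannot escape to infinity and must instead be realized at an interior point.
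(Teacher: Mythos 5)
Your proof is correct and follows essentially the same route as the paper's: the same choice of $C\in(0,1)$ via positivity on the compact ball $\overline{B}_\vr$, the same linearization $w^{p-1}-(Cu)^{p-1}=c(x)v$ with $c\geq 0$, and the same sign condition $1-C^{p-2}>0$ producing a positive forcing term for the comparison in $\rn\smallsetminus \overline{B}_\vr$. The only difference is that you spell out the maximum principle on the unbounded domain (using the decay of $u$ and $w$ from Lemma~\ref{lem:upper_bound_positive} to rule out an infimum escaping to infinity), a justification the paper leaves implicit.
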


\begin{proof}
Let $C\in(0,1)$ be such $v:=w - Cu>0$ in $\overline{B}_\vr$. Then, for $|x|>\rho$,
\begin{align*}
-\Delta v 
=Cu^{p-1}-w^{p-1}
=((Cu)^{p-1}-w^{p-1})+(1-C^{p-2})Cu^{p-1}
=-c(x)v+(1-C^{p-2})u^{p-1},
\end{align*}
where
\begin{align*}
c(x):=
\begin{cases}
\frac{w^{p-1}(x) - (Cu(x))^{p-1}}{w(x)-Cu(x)} &\text{if \ }w(x)\neq Cu(x),\\
0 &\text{if \ }w(x)=Cu(x).
\end{cases}
\end{align*}
That is, for $|x|>\rho,$
\begin{align*}
-\Delta v +c(x)v
=(1-C^{p-2})u^{p-1}.
\end{align*}
Since $C\in(0,1)$ and $p>2$, we have that $1-C^{p-2}>0$.  Furthermore, $c\geq 0$ in $\rn.$  Then, by the maximum principle, we have that $v\geq 0$ in $\rn\smallsetminus B_\vr$ and the claim follows.
\end{proof}
\medskip

\begin{proof}[Proof of Theorem~\ref{thm:decay}] Statement $(i)$ follows from Lemmas~\ref{lem:upper_bound_positive} and~\ref{lem:upper_bound}, and statement $(ii)$ follows from Lemmas~\ref{lem:lower_bound_ball} and~\ref{lem:domain_comparison}.
\end{proof}

\begin{remark}\label{exp:rmk}
\emph{In contrast, every solution $u$ of~\eqref{eq:Q_problem} satisfies
\begin{align*}
 |u(x)|<Ce^{-\eps |x|}\qquad \text{for every \ }x\in \rn
\end{align*}
and some constant $C=C(\eps,u)>0$. This fact is proved using standard arguments (using an exponential function as supersolution for comparison).}
\end{remark}

\medskip

\bigskip

\begin{flushleft}
\textbf{Mónica Clapp}\\
Instituto de Matemáticas\\
Universidad Nacional Autónoma de México \\
Campus Juriquilla\\
76230 Querétaro, Qro., Mexico\\
\texttt{monica.clapp@im.unam.mx} 

\bigskip 

\textbf{Víctor Hernández-Santamaría} and \textbf{Alberto Saldaña}\\
Instituto de Matemáticas\\
Universidad Nacional Autónoma de México \\
Circuito Exterior, Ciudad Universitaria\\
04510 Coyoacán, Ciudad de México, Mexico\\
\texttt{victor.santamaria@im.unam.mx, \ alberto.saldana@im.unam.mx}
\end{flushleft}


\begin{thebibliography}{99}

\bibitem{as}Ackermann, Nils; Szulkin, Andrzej: A concentration phenomenon for semilinear elliptic equations. Arch. Ration. Mech. Anal. 207 (2013), no. 3, 1075–1089.


\bibitem{bww}Bartsch, Thomas; Weth, Tobias; Willem, Michel: Partial symmetry of least energy nodal solutions to some variational problems. J. Anal. Math. 96 (2005), 1–18.

\bibitem{b} Brock, Friedemann: Symmetry and monotonicity of solutions to some variational problems in cylinders and annuli.
Electron. J. Differential Equations 2003, No. 108, 20 pp. 

\bibitem{bz}Brothers, John E.; Ziemer, William P.: Minimal rearrangements of Sobolev functions. J. Reine Angew. Math. 384 (1988), 153–179. 

\bibitem{ccn}Castro, Alfonso; Cossio, Jorge; Neuberger, John M.: A sign-changing solution for a superlinear Dirichlet problem. Rocky Mountain J. Math. 27 (1997), no. 4, 1041–1053.

\bibitem{cs}Campos, L. M. B. C., and M. J. S. Silva.: On hyperspherical associated Legendre functions: the extension of spherical harmonics to $ N $ dimensions. arXiv preprint arXiv:2005.09603 (2020).

\bibitem{css}Clapp, Mónica, Alberto Saldaña, and Andrzej Szulkin: On a Schr\" odinger system with shrinking regions of attraction. arXiv preprint arXiv:2407.20125 (2024).

\bibitem{fw}Fang, Xiang-Dong; Wang, Zhi-Qiang: Limiting profile of solutions for Schrödinger equations with shrinking self-focusing core. Calc. Var. Partial Differential Equations 59 (2020), no. 4, Paper No. 129, 18 pp. 

\bibitem{gt} Gilbarg, David; Trudinger, Neil S. Elliptic partial differential equations of second order. Vol. 224. Springer-Verlag, Berlin-New York, 1977.

\bibitem{k}Kesavan, Srinivasan: Symmetrization and applications. Series in Analysis, 3. World Scientific Publishing Co. Pte. Ltd., Hackensack, NJ, 2006.

\bibitem{ll}Lieb, Elliott H.; Loss, Michael: Analysis. Second edition. Graduate Studies in Mathematics, 14. American Mathematical Society, Providence, RI, 2001. 

\bibitem{r}Rabinowitz, Paul H.: Minimax methods in critical point theory with applications to differential equations. CBMS Regional Conference Series in Mathematics, 65. Published for the Conference Board of the Mathematical Sciences, Washington, DC; by the American Mathematical Society, Providence, RI, 1986.

\bibitem{sw} Saldaña, Alberto; Weth, Tobias: Asymptotic axial symmetry of solutions of parabolic equations in bounded radial domains. J. Evol. Equ. 12 (2012), no. 3, 697–712. 

\bibitem{s}Struwe, Michael: Variational methods. Applications to nonlinear partial differential equations and Hamiltonian systems. Second edition. Ergebnisse der Mathematik und ihrer Grenzgebiete (3), 34. Springer-Verlag, Berlin, 1996.

\bibitem{st1}Stuart, Charles A.: Self-trapping of an electromagnetic field and bifurcation from the essential spectrum. Arch. Rational Mech. Anal. 113 (1990), no. 1, 65–96.

\bibitem{st2}Stuart, Charles A.: Guidance properties of nonlinear planar waveguides. Arch. Rational Mech. Anal. 125 (1993), no. 2, 145–200.

\bibitem{v} Véron, Laurent.: Singular solutions of some nonlinear elliptic equations. Nonlinear Analysis: Theory, Methods \& Applications 5.3 (1981): 225-242.

\bibitem{we}Weth, Tobias: Symmetry of solutions to variational problems for nonlinear elliptic equations via reflection methods. Jahresber. Dtsch. Math.-Ver. 112 (2010), no. 3, 119–158.

\bibitem{w}Willem, Michel: Minimax theorems. Progress in Nonlinear Differential Equations and their Applications, 24. Birkhäuser Boston, Inc., Boston, MA, 1996.

\end{thebibliography}
\end{document}